\pgfplotsset{width=10cm,compat=1.9}
\pgfplotsset{compat=newest}
\tikzset{%
    pics/myrec/.style n args={2}{code={%  
            \draw (0,0) rectangle (#1,#2);
    }},
}
\newif\ifdraft
\date{DRAFT: \today}
\date{September 12, 2024} 
\title{Convergence and non-convergence in a nonlocal gradient flow}
\author{Sangmin Park\footnote{Email address: sangminp@andrew.cmu.edu} }
\author{Robert L. Pego\footnote{Email address: rpego@cmu.edu} }
\affil{%
Department of Mathematical Sciences\\ 
Carnegie Mellon University\\
Pittsburgh, PA 15213.}
\newcommand{\half}{\frac12}
\newcommand{\R}{\mathbb{R}}
\newcommand{\N}{\mathbb{N}}
\newcommand{\phib}{\boldsymbol{\phi}}
\newcommand{\uu}{\boldsymbol{u}}
\newcommand{\vv}{\boldsymbol{v}}
\newcommand{\uueq}{\uu^{\rm eq}}
\newcommand{\ueq}{u^{\rm eq}}
\newcommand{\barfeq}{\bar f^{\rm eq} }
\newcommand{\E}{\mathcal{E}}
\newcommand{\Fcal}{\mathcal{F}}
\newcommand{\one}{\mathds{1}}
\newcommand{\eps}{\varepsilon}
\newcommand{\M}{\mathcal{M}}
\newcommand{\calN}{\mathcal{N}}
\newcommand{\calQ}{\mathcal{Q}}
\newcommand{\ip}[1]{\langle #1 \rangle}
\newcommand{\D}{\partial}
\newcommand{\inv}{^{-1}}
\newcommand{\jnex}{j_m}
\definecolor{byzantine}{rgb}{0.74, 0.2, 0.64}
\newcommand{\blue}{\normalcolor}
\newcommand{\purp}{\normalcolor}
\newcommand{\nc}{\normalcolor}
\newcommand{\Loja}{{\L{}ojasiewicz}}
\newcommand{\Sengul}{{\c Seng\"ul}}
\DeclareMathOperator*{\osc}{osc}
\newtheorem{theorem}{Theorem}
\newtheorem{lemma}{Lemma}
\newtheorem{proposition}{Proposition}
\newtheorem{corollary}{Corollary}
\theoremstyle{definition}
\newenvironment{remark}{\pushQED{\qed}\remarkx}{\popQED\endremarkx}
\numberwithin{equation}{section}
\begin{document}

\maketitle
 \begin{center}
 {\em Dedicated to Sir John Ball}
 \end{center} \smallskip

\begin{abstract}
   We study the asymptotic convergence \blue as $t\rightarrow\infty$ \nc of solutions of $\partial_t u=-f(u)+\int f(u)$,
   a nonlocal differential equation that is formally a gradient flow in a constant-mass subspace of $L^2$ arising from simplified models of phase transitions. 
   In case the solution takes finitely many values, we provide a new proof of stabilization that uses a \Loja-type gradient inequality near a degenerate curve of equilibria.
   Solutions with infinitely many values in general {\em need not} converge to equilibrium, however, 
   which we demonstrate by providing counterexamples for piecewise linear and cubic functions $f$. 
   Curiously, the exponential {\em rate} of convergence in the finite-value case can jump from order $O(1)$ to arbitrarily small values upon perturbation of parameters.
 \end{abstract}
 \medskip
\noindent
{\it Keywords: }{gradient flows, infinite-dimensional dissipative dynamical systems}

\noindent
{\it Mathematics Subject Classification:} 34D05, 35B40, 37L15

 \newcommand\barbelow[1]{\stackunder[1.2pt]{$#1$}{\rule{.8ex}{.075ex}}}

\section{Introduction}
Let $(\Omega,\Fcal,\nu)$ be a probability space, and assume
$f\colon\R\to\R$ is locally Lipschitz and piecewise $C^1$.  
This paper investigates the asymptotic behavior as $t\to\infty$ of 
bounded solutions to the nonlocal differential equation
 \begin{equation}\label{eq:ODE_nonlocal}
     \partial_t u(x,t) = -f(u(x,t))+ %\bar f(t), 
     \int_{\Omega}f(u(y,t))\,d\nu(y)\,,
 %    \quad x\in[0,1],\ t\geq 0,
     \quad x\in\Omega,\ t\geq 0.
 \end{equation}
While our main concern involves general nonlinear functions $f$,
key examples to bear in mind are nonmonotonic polynomials and piecewise-linear 
functions.

The initial-value problem for \eqref{eq:ODE_nonlocal} is well-posed locally in time in $B(\Omega)$, 
the Banach space of bounded measurable functions on $\Omega$ equipped with the supremum norm.
The solution exists globally in time and remains uniformly bounded if, for example, 
the initial data $u(x,0)=u_0(x)$ lies in an interval $[a,b]$ with the property that  
$f(a)\le f(s)\le f(b)$ {for all $s\in[a,b]$}, 
as will be seen below.

The nonlocal term in \eqref{eq:ODE_nonlocal} ensures that the mean of the solution is conserved, as
\begin{equation}\label{e:dtmass}
 \frac{d}{dt}\int_\Omega \blue u(x,t)\nc\,d\nu(x) %=\int_0^1 \partial_t u(t,x)\,dx
 =\int_\Omega \left(-f(\blue u(x,t)\nc)+ \bar f(t) \right)d\nu(x) =0,
\end{equation}
where
 \begin{equation} \label{d:fbar}
     \bar f(t)=\int_\Omega f(u(x,t))\,d\nu(x).
 \end{equation}
We can view \eqref{eq:ODE_nonlocal} formally as the equation of 
 $L^2$-gradient flow constrained by fixing the mean:
 Let $F$ denote the antiderivative of $f$ --- i.e.,
 \begin{equation}\label{def:W}
     F(x)=\int_0^x f(y)\,dy,
 \end{equation}
 and define the energy $\E$ by
 \begin{equation}\label{def:E}
     \E(u)=\int_\Omega F(u(x))\,d\nu(x)=\int_\Omega \int_0^{u(x)}f(y)\,dy\,d\nu(x).
 \end{equation}
 Then the equation \eqref{eq:ODE_nonlocal} can be written formally in the form
 \begin{equation}
     \D_t u = -\calQ\nabla\E(u),
 \end{equation}
 where $\nabla\E(u)=f\circ u$ is formally 
the $L^2$-gradient of $\E$ at $u$, and $\calQ$ is the $L^2$-orthogonal projection
on the space of functions with mean zero.

Due to this constrained gradient structure, the energy 
is dissipated along solutions of \eqref{eq:ODE_nonlocal},
with 
\begin{align*}
    \blue\frac{d}{dt}\E(u) =
    \int_\Omega f(u)\D_t u\,d\nu = 
    \int_\Omega (f(u)-\bar f(t))\D_t u\,d\nu 
%    = \ip{\nabla E(u),\partial_t u} = 
%    \ip{f\circ u - \bar f,\D_t u} 
    = -\int_\Omega |\D_t u|^2\,d\nu\,.\nc
\end{align*}
Hence for a bounded solution the limit $\E_\infty = \lim_{t\to\infty}\E(u(t))$ exists, and we have
\begin{equation}
   \E_\infty + \int_0^\infty \blue\int_\Omega |\D_t u|^2 \,d\nu\,dt \nc= \E(u_0)\,.
\end{equation}
By \eqref{eq:ODE_nonlocal}, $\D_t u$ is uniformly Lipschitz in $t$, so  \blue$\int_\Omega|\D_t u|^2\,d\nu$\nc is as well, 
whence it follows that 
%the $L^2$ norm $\|\D_t u\| \to 0$ as $t\to\infty$.
\begin{equation}\label{limdt2}
    \int_\Omega \blue|\D_t u|^2\nc\,d\nu \to 0 \quad\text{as $t\to\infty$.}
\end{equation}
Then it follows any limit point of the orbit $\{u(\cdot,t)\}_{t\ge0}$ 
(in the $L^2$ sense) must be an equilibrium, a (possibly discontinuous) function $\hat u$ such that 
$f(\hat u(x))$ is a.e.~a constant.

The main question that we resolve herein is this: 
\begin{equation}\label{mainQ}
\text{Does $u(\cdot,t)$ necessarily converge to a {\em single} equilibrium as $t\to\infty$?}
\end{equation}
It is well-known that solutions of gradient systems need not converge
in general, even in $\R^2$ \cite[p.~13]{PalisdeMelo82}.
But in the paper \cite{Pego92}, the second author proved that  for solutions of \eqref{eq:ODE_nonlocal}
the answer is {\em yes, assuming} the initial data $u_0$ has finite range, 
taking only finitely many values $u_1^0,\ldots, u_N^0$.  
In that case \eqref{eq:ODE_nonlocal} is equivalent to a finite-dimensional system
for $\uu(t)=(u_1(t),\ldots,u_N(t))$ in $\R^N$.
In \cite{Pego92},  the solution's $\omega$-limit set is shown to contain points 
in a normally hyperbolic curve of equilibria,
and a theorem of Hale and Massat \cite{HaleMassatt81} is invoked to conclude convergence
as $t\to\infty.$

As pointed out by \Sengul~\cite{Sengul2021nonlinear},  the theorem of  
Hale and Massat used in \cite{Pego92} was improved by Hale and Raugel \cite{HaleRaugel92}, 
and this could also improve the convergence proof in \cite{Pego92} in the finite range case.
One thing we provide in the present paper is a different and considerably simpler proof of convergence in the finite range case,
based on a {\em gradient inequality} of the form
\begin{equation}\label{i:grad1}
 c \|\E(u)-\E(\hat u)\|^{1/2} \le  \|\calQ \nabla \E(u)\| \,,
\end{equation}
which is proved valid for $u$ on the orbit near a ``regular'' equilibrium $\hat u$ 
in the $\omega$-limit set, \purp which is guaranteed to exist \nc under the assumption that $\bar f(t)$ fails to converge.
The use of gradient inequalities to analyze convergence of gradient flows was pioneered by 
\Loja~\cite{lojasiewicz1965ensembles} and Simon~\cite{Simon83},
and has since expanded greatly in the the field of optimization~\cite{AttouchEA2013}
and in the analysis of dynamics in PDE~\cite{HarauxJendoubi2015}.
The proof of such inequalities in general involves a deep study of objects 
such as subanalytic sets and o-minimal structures \cite{BolteEA2007,DenkowskaDenkowski2015}.
But in our case, a proof based on simple Taylor approximation works, since 
we use \eqref{i:grad1} not for arbitrarily degenerate equilibria $\hat u$, but 
only for \purp curves \nc of equilibria that, although they are not isolated, 
correspond to regular values of $f$. 
\purp
This is similar to proofs of gradient estimates near nondegenerate manifolds of equilibria 
by Simon~\cite[Lemma 1, p.~80]{Simon96} and Haraux and Jendoubi~\cite[Thm. 2.1]{HarauxJendoubi2007}.
Such arguments were generalized by Chill to reduce verification of gradient estimates
to a ``critical manifold,'' see~\cite[Thm.~3.10]{Chill2003}.
\nc

Our main result, however, is that the general answer to the main question~\eqref{mainQ} 
is {\em no!} --- It is possible that $u$ fails to converge if $u_0$ takes infinitely many values.  
We construct counterexamples to convergence in cases when 
$f$ is piecewise-linear or a cubic polynomial, having an ``$N$-shaped'' graph.  
Our constructions are motivated by the observation
that perturbations (arbitrarily small in $L^2$) of certain degenerate unstable equilibria 
can cause the value of $\bar f(t)$ to eventually drift a finite distance either up or down.
An infinite number of such perturbations can then be superimposed to cause 
$\bar f(t)$ to oscillate, slower and slower, with no limit.

\subsection{Related works}

Equation~\eqref{eq:ODE_nonlocal} is a simplified model for 
dissipative dynamics in a number of models of phase transitions that are related to each other. 
These include models of viscoelastic materials~\cite{AndBall82,Pego87,BallSengul15}, 
models of formation of material microstructure~\cite{BallEtal91,FrieseckeMcLeod1996,FrieseckeMcLeod1997},
regularized forward-backward diffusion models~\cite{Novick-CohenPego91},
and shear flows in non-Newtonian fluids~\cite{NohelEtal90,NohelPego93}.
\Sengul\ has recently reviewed work on nonlinear viscoelastic models of strain rate type~\cite{Sengul2021nonlinear}.

In order to ensure convergence of solutions in a problem of viscoelasticity,
Andrews and Ball~\cite{AndBall82} introduced a hypothesis that they called a 
{\em nondegeneracy condition}, which works also for solutions of \eqref{eq:ODE_nonlocal}.
To explain,  suppose for simplicity that $f$ is piecewise monotone, so that 
for $z$ in any bounded set of $\R$, the equation $f(z)=s$ has a
finite number $M=M(s)$ of roots $z_1(s)<z_2(s)<\ldots<z_M(s)$,
where $M$ is piecewise continuous jumping a finite number of times.
Then the {nondegeneracy condition} requires that no nonzero linear combination
of $z_1,\ldots,z_M$ is constant on any common interval of definition.
For counterexamples to convergence as constructed in this paper, 
it is important that the nondegeneracy condition be violated. 
This is indeed the case however if, e.g., $f$ is any piecewise linear function, 
or a \blue nonmonotonic \nc cubic polynomial (since then the sum of the roots $z_1+z_2+z_3$ is constant).

In 2015, Ball and \Sengul\ published an in-depth study~\cite{BallSengul15} of an equation of the form exactly as in \eqref{eq:ODE_nonlocal}  in the context of quasistatic nonlinear viscoelasticity in one space dimension. 
In this context, the variable $u$ represents the material strain and should remain positive.
For the measure space $\Omega=[0,1]$ with Lebesgue measure (or any Borel-isomorphic space),
they establish that \eqref{eq:ODE_nonlocal} is well-posed in the positive cone of $L^2(\Omega)$ 
%\cite[Theorems 5, 6]{BallSengul15}
when $F$ is $\lambda$-convex (i.e., $F(u)+\frac12\lambda u^2$ is convex) and $f(u)\to-\infty$ as $u\downarrow0$,
by making use of a one-sided Lipschitz condition on $f$ to obviate the problem that the Nemytskii operator $u\mapsto f\circ u$
is not Lipschitz on $L^2$.
Ball and \Sengul\ then make rigorous the interpretation of these solutions as a gradient flow of $\E$ in a constant-mass subset of $L^2(\Omega)$. 
Further, they prove the $L^2$ compactness of positive orbits using monotone rearrangement and Helly's theorem,
and they improve the convergence analysis in the studies \cite{AndBall82,Novick-CohenPego91} in several ways.
They prove that solutions converge to equilibrium under a weakened nondegeneracy condition. 
For the cubic case $f(u)=u^3-u$ in particular, convergence is proved under the hypothesis 
that 
\begin{equation} \label{c:zeromean}
\int_{\Omega}u(x,0)\,d\nu(x)\neq 0 \,.
\end{equation}

A nearly contemporaneous study by Hilhorst {\em et al.} \cite{HilhorstEA15} was motivated by 
study of a singularly perturbed Allen-Cahn equation with mass conservation \cite{RubStern92}.
These authors studied existence and uniqueness of solutions of \eqref{eq:ODE_nonlocal} taking values in $L^\infty(\Omega)$ for multistable nonlinearities \cite[Theorem 1.4]{HilhorstEA15}, and proved stabilization for bistable nonlinearities $f$ when the initial data have no flat portions \cite[Theorem 1.6]{HilhorstEA15}, having the property that all level sets $\{x\in\Omega\mid u(x,0)=c\}$ have measure zero. Based on the asymptotic behavior of solution to of the nonlocal ODE, they study the generation of interfaces for solutions of the mass-conserved Allen-Cahn equation \cite{HilhorstEA20}.

\subsection{Discussion and plan}

Gradient flows are generally important in many areas in mathematics, including in optimization for purposes such as 
training artificial neural networks \cite{RobMon51,BCFN18,FoNiQu06,EJRW23} 
and improving methods of statistical sampling~\cite{Garcia-TrillosEA}.
The \Loja\ gradient estimates provide a powerful tool to conclude convergence of 
finite-dimensional gradient flows with analytic and also nonsmooth subanalytic nonlinearities~\cite{BolteEA2007}.
Simon's extensions have allowed the handling of some infinite-dimensional flows, 
particularly for partial differential equations of parabolic type in which 
the infinite-dimensional dynamics can be slaved to some finite-dimensional part
by a kind of Lyapunov-Schmidt reduction~\cite{HarauxJendoubi2015}. More recently, \Loja -type inequalities have been extended to general metric spaces \cite{HauMaz19}. 

In light of these strong results from gradient-estimate theory,
our counterexamples for solutions of \eqref{eq:ODE_nonlocal} are puzzling
insofar as they  work for the simplest kinds of probability spaces and nonlinearities.
For example, non-convergent solutions can be found on the one-dimensional domain $\Omega=[0,1]$
which are monotone in $x$ and have compact trajectories in $L^2$,
and which have finite-dimensional (actually one-dimensional) $\omega$-limit sets.
Moreover, the nonlinear function $f$ can be polynomial (cubic), both as a real function
and as a Nemytskii operator on $B(\Omega)$
(although the latter is not even once Fr\'echet differentiable on the space $L^2(\Omega)$).

So despite the rather benign nature of nonlocally coupled \blue differential \nc equations
from the point of view of nonlinear analysis,
having a very regular nonlinear structure and having essentially finite-dimensional long-time dynamics
appears insufficient to ensure gradient-flow convergence.  
For finite-dimensional flows, solutions converge, but our constructions indicate that
the rate of convergence can be arbitrarily slow, even for fixed nonlinearity and fixed dimension as small as 3.
The appearance of arbitrarily slow rates of convergence is a curious phenomenon, in fact---it happens by perturbation 
from a situation in which the rate of convergence is $O(1)$ and a \Loja\ inequality applies.

Our non-convergent examples are all non-generic and highly unstable.
To emphasize how delicate non-convergence has to be for the cubic nonlinearity,
we present the following criterion that is necessary (but far from sufficient) for non-convergence, 
which shows that non-convergence is far more unlikely to arrange than the 
codimension-1 necessary condition $\int_\Omega u=0$ from \eqref{c:zeromean} might suggest.
 \begin{proposition}[Unstable nature of non-convergence]\label{prop:unstable_counterex}
             Let $f(u)=u^3-u$, and suppose $u(\cdot,t)$ is a bounded solution of \eqref{eq:ODE_nonlocal} that fails
             to converge in $L^2$ to a limit as $t\to\infty$.  Then $\int_\Omega u(x,0)\,d\nu(x) =0$, 
             and moreover, there exists $c$ such that the three sets, consisting  
             of all $x\in\Omega$ where $u(x,0)=c$,  where $u(x,0)>c$, and where $u(x,0)<c$ respectively,
             each have measure exactly equal to  $\frac13.$
     \end{proposition}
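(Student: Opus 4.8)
The plan is to study the $\omega$-limit set $\Sigma=\omega(u_0)$ of the orbit in $L^2(\nu)$ and to exploit the special structure of the cubic $f$. First I would record the reductions that make this tractable. Since the right-hand side of \eqref{eq:ODE_nonlocal} acts on $u(x,t)$ pointwise in $x$ through the common forcing $\bar f(t)$, writing $\phi_t$ for the time-$t$ solution operator of the scalar equation $\dot w=-f(w)+\bar f(t)$ we have $u(x,t)=\phi_t(u_0(x))$. As $\phi_t$ is an increasing diffeomorphism, $u(\cdot,t)$ is at all times an order-preserving monotone rearrangement of $u_0$, so the orbit is precompact in $L^2$ by Helly's theorem (as in \cite{BallSengul15}), $\Sigma$ is nonempty, compact and connected, and by \eqref{limdt2} every $\hat u\in\Sigma$ is an equilibrium with $f(\hat u)\equiv s$ for a constant $s=s(\hat u)$; moreover $\hat u=\psi\circ u_0$ for a nondecreasing $\psi$ taking values in the root set $\{z:f(z)=s\}$, which for the cubic has at most three elements $z_1(s)<z_2(s)<z_3(s)$ obeying $z_1+z_2+z_3=0$ when $|s|<s^*:=2/(3\sqrt3)$. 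Finally $u$ fails to converge iff $\Sigma$ is not a singleton, and $\int_\Omega u\,d\nu\equiv m:=\int_\Omega u_0\,d\nu$ and $\E\equiv\E_\infty$ on $\Sigma$.

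The essential structural input is a \emph{no pile-up} lemma: if $\hat u\in\Sigma$ with $|s(\hat u)|<s^*$, then $\{\hat u=z_2(s)\}$ equals, modulo null sets, a single level set $\{u_0=c\}$ of the data; equivalently, no two distinct initial values can have the repelling interior root $z_2(s)$ as a common subsequential limit of their trajectories. The natural route is to track the separation $h(t)=\phi_t(v_2)-\phi_t(v_1)>0$ of two trajectories, which obeys $h'=-a(t)h$ with $a(t)$ the mean of $f'$ on $[\phi_t(v_1),\phi_t(v_2)]$, together with the consequence of the energy identity that $f(\phi_t(v))-\bar f(t)\to0$ for a.e.\ $v$; near $z_2(s)$ one has $a(t)\approx f'(z_2(s))<0$, so $h$ is expanding there, which one wants to show is incompatible with a positive-measure set of values being squeezed onto $z_2(s)$. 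Making this quantitative — in particular accounting for the slowdown of the oscillations of $\bar f(t)$, which is precisely the regime in which the paper's counterexamples live — is, I expect, the main obstacle, and will likely require the global phase-line structure of $\dot w=-f(w)+\bar f$ (e.g.\ that the $z_1$- and $z_2$-branches coalesce only as $\bar f\to\pm s^*$) rather than a purely local estimate.

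Granting the lemma, I would finish as follows. If $s(\Sigma)$ is a single value $s_0$, then every equilibrium in $\Sigma$ is determined by which single level of $u_0$ maps to $z_2(s_0)$ (or by one threshold, if none does); the mass constraint makes the two-valued option unique, and each three-valued option is separated in $L^2$ from the others and from it by a fixed positive amount, so by connectedness $\Sigma$ is a point and $u$ converges, a contradiction. Hence $s(\Sigma)$ is a nondegenerate interval, and since the only possible constant equilibrium is $u\equiv m$ it must lie in $[-s^*,s^*]$, so it contains a nondegenerate open interval $J\subseteq(-s^*,s^*)$. For $s\in J$ the multiplicities $(\mu_1,\mu_2,\mu_3)$ of any $\hat u\in\Sigma$ with $s(\hat u)=s$ satisfy $\sum_i\mu_i=1$, $\sum_i\mu_i z_i(s)=m$, and $\sum_i\mu_i F(z_i(s))=\E_\infty$; checking that the associated Vandermonde-type determinant is not identically zero in $s$ shows that off a finite subset of $J$ these equations force the equilibrium to be genuinely three-valued with $(\mu_1,\mu_2,\mu_3)$ given by Cramer's rule, hence real-analytic in $s$. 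By the lemma $\mu_2(s)$ lies in the countable, hence totally disconnected, set of atom masses of the law of $u_0$, so being analytic it is constant; then the level $c(s)$ mapping to $z_2$ ranges over finitely many atoms and is likewise constant, and so are $\mu_1$ and $\mu_3$.

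It remains to extract the numbers. With $(\mu_1,\mu_2,\mu_3)$ constant we have $\mu_1 z_1(s)+\mu_2 z_2(s)+\mu_3 z_3(s)\equiv m$ on $J$; extending by real-analyticity to $(-s^*,s^*)$ and differentiating twice at $s=0$, where $(z_1,z_2,z_3)=(-1,0,1)$, $z_i'(s)=1/(3z_i(s)^2-1)$ and $z_i''(s)=-6z_i/(3z_i^2-1)^3$, yields $\tfrac12\mu_1-\mu_2+\tfrac12\mu_3=0$ and $\mu_1=\mu_3$; together with $\mu_1+\mu_2+\mu_3=1$ this gives $\mu_1=\mu_2=\mu_3=\tfrac13$, and then $m=-\mu_1+\mu_3=0$, which is the first assertion. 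Finally, letting $c$ be the common value of $c(s)$, the monotonicity of $\psi$ together with no pile-up gives $\{u_0<c\}=\{\hat u=z_1(s)\}$, $\{u_0=c\}=\{\hat u=z_2(s)\}$ and $\{u_0>c\}=\{\hat u=z_3(s)\}$ up to null sets, so these three sets have $\nu$-measure $\mu_1=\mu_2=\mu_3=\tfrac13$, which is the second assertion.
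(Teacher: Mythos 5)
Your overall architecture is reasonable and your endgame is correct: once one knows the three limiting phase fractions $(\mu_1,\mu_2,\mu_3)$ are constant and satisfy $\sum_i\mu_i z_i(s)=m$ on a nondegenerate interval of $s$, analytic continuation and differentiation at $s=0$ do give $\mu_i=\frac13$ and $m=0$; this is a legitimate replacement for the paper's appeal to Proposition 12 of Ball and \Sengul~\cite{BallSengul15}. But there is a genuine gap exactly where you flag one: the ``no pile-up'' lemma is asserted, not proved, and your stated reason for expecting it to be hard (the slowdown of the oscillations of $\bar f$) is a red herring. The separation of two trajectories obeys $\D_t\bigl(u(x,t)-u(\hat x,t)\bigr)=f(u(\hat x,t))-f(u(x,t))$ --- the nonlocal term $\bar f(t)$ cancels --- so on the middle phase $\Phi_m=(\hat b,\hat a)$, where $f$ is strictly decreasing, two values that both remain in $\bar\Phi_m$ for all large $t$ have a gap that grows at a rate bounded below by a positive constant depending only on the initial gap, and this exceeds the width $\hat a-\hat b$ in finite time. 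No global phase-line analysis and no control of the rate of oscillation of $\bar f$ is needed.

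What \emph{is} needed, and is missing from your write-up, is the preliminary trapping argument that makes this one-line estimate applicable: one must first produce a finite time $T_*$ after which every value $u(x,t)$ lies in $[a,b]=[-2/\sqrt3,\,2/\sqrt3]$ and the outer phases $\Phi_l,\Phi_r$ are pointwise stable forever, so that ``staying in $\bar\Phi_m$ for all large $t$'' is the only alternative to permanent absorption in a stable phase, and so that the phase sets $\Omega_j(t)$ are monotone in $t$ with limits $\Omega_j^\infty$ of fixed measure $\mu_j^\infty$. The paper obtains this from the non-convergence of $\bar f$: it picks an interval $J$ of regular values inside $(\liminf\bar f,\limsup\bar f)$, uses $\D_t\bar f\to0$ to get arbitrarily long windows on which $\bar f$ stays in the middle third of $J$, and shows by a drift estimate that during such a window every value is swept into one of the branches $z_j(J)$; it must also rule out $J\not\subset(f(a),f(b))$, since otherwise $z(J)$ would become positively invariant and trap $\bar f$ in $J$, a contradiction. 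This trapping step also supersedes your more delicate route to constancy of the $\mu_i$ (analyticity of a Cramer's-rule solution of three moment equations plus countability of the atom masses of the law of $u_0$): monotonicity of the phase sets gives fixed $\mu_j^\infty$ directly, and the identity $\sum_j\mu_j^\infty z_j(s)=\bar u$ for all $s\in J$ follows by evaluating along times $t_k\to\infty$ with $\bar f(t_k)=s$. As written, your argument is incomplete without these two ingredients, though neither requires the heavy machinery you anticipate.
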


The plan of this paper is as follows. 
We develop a few basic properties of solutions of \eqref{eq:ODE_nonlocal} in Section~\ref{s:basic},
regarding well-posedness, the relative preservation of order at different values of $x$,
and invariant sets for solutions (a kind of maximum principle).
In Section~\ref{s:finite-d} we re-prove long-time convergence for solutions with finite range,
in a simpler way than in \cite{Pego92} using gradient estimates. 
Our construction of non-convergent solutions for piecewise-linear bistable $f$ appears in 
Section~\ref{s:nonconvergence-PL}.  Subsection~\ref{ss:gradPL} contains an $L^2$ gradient inequality 
that is valid in this case (Lemma~\ref{lem:gradE-PL}) which is curiously similar to 
the one used to prove convergence in the finite-range case  with arbitrary nonlinearity (Lemma~\ref{lem:gradE}).
In Section~\ref{s:cubic} we construct non-convergent examples for cubic $f$, 
and also complete the proof of Proposition~\ref{prop:unstable_counterex}. 

Finally we discuss in Section \ref{sec:sensitivity} a phenomenon of instability of convergence rates under perturbation around degenerate equilibria.  For suitable three-valued initial data, parameter perturbations of order $O(\eps)$ leads to slow exponential convergence at rate $O(\eps)$, whereas a rate of order $O(1)$ is {\purp guaranteed by the gradient inequality in Lemma~\ref{lem:gradE} \nc} when $\eps=0$. 

\section{Basic properties of solutions}\label{s:basic}

We begin our analysis with a brief discussion of the well-posedness of the initial value problem for
\eqref{eq:ODE_nonlocal}, and some basic properties that solutions have
regarding preservation of order and positively invariant sets.
%\fix{Lead-in sentence here.}

We choose to work with solutions taking values $u(\cdot,t)$ in the space of bounded measurable functions $B(\Omega)$,
as it is convenient to interpret them as pointwise satisfying the \blue nonlocal \nc differential equation in \eqref{eq:ODE_nonlocal},
without having to take the trouble of selecting representatives from equivalence classes 
as was done in \cite{Novick-CohenPego91} for elements of $C([0,T],L^\infty(\Omega))$.
Local-time well-posedness (existence, uniqueness, and continuous dependence on initial data) follows by 
the standard Picard iteration method.  
This use of $B(\Omega)$ makes well-posedness and the study of pointwise properties rather easy, 
as solutions $u(x,t)$ are $C^1$ in $t$ for every $x$, but some other things become more difficult. 
E.g., even in case $\Omega=[0,1]$ with Lebesgue measure, it does not seem easy to determine 
whether, say, measurable monotone reordering is possible pointwise everywhere for all initial data.

We will make considerable use of the pointwise properties that solutions enjoy according to the two following results. \blue The first lemma was established in the proof of \cite[Theorem 2]{BallSengul15}\nc. The second one is similar to results observed  in \cite[Lemma 2.5]{HilhorstEA15} \blue and \cite[Corollary 2]{BallSengul15} \nc  and 
previously for viscous diffusion equations in \cite[Proposition~2.7]{Novick-CohenPego91}.

    \begin{lemma}[Preservation of order]\label{lem:monotone}
        Let $u$ solve the nonlocal ODE \eqref{eq:ODE_nonlocal}. If $u(x,0)< u(y,0)$, then for all $t>0$ we have $u(x,t)< u(y,t)$. Further, equality is also preserved.
    \end{lemma}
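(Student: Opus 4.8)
The plan is to exploit the fact that, for each fixed pair $x,y\in\Omega$, the quantity $w(t) := u(y,t) - u(x,t)$ satisfies a scalar linear ODE once we freeze the coefficient along the solution. Writing \eqref{eq:ODE_nonlocal} at the two points $x$ and $y$ and subtracting, the nonlocal integral term $\bar f(t)$ cancels, leaving
\[
 \dot w(t) = -\bigl(f(u(y,t)) - f(u(x,t))\bigr) = -a(t)\, w(t),
\]
where we set $a(t) := \dfrac{f(u(y,t)) - f(u(x,t))}{u(y,t)-u(x,t)}$ whenever $w(t)\neq 0$, and $a(t):=0$ (say) when $w(t)=0$. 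The first thing to check is that $a(t)$ is a locally bounded (indeed locally integrable) function of $t$: since the solution $u(\cdot,t)$ stays in a fixed bounded set on compact time intervals and $f$ is locally Lipschitz, we have $|f(u(y,t))-f(u(x,t))|\le L\,|u(y,t)-u(x,t)|$ with $L$ the Lipschitz constant of $f$ on that bounded set, so $|a(t)|\le L$ on any compact time interval regardless of whether $w$ vanishes.

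Granting that, the argument is a Gronwall/integrating-factor computation. For the strict-inequality claim: suppose $w(0) > 0$. Let $t^* := \inf\{t>0 : w(t) = 0\}$, and suppose for contradiction that $t^* < \infty$. On $[0,t^*)$ we have $w(t)>0$, and the bound $|a(t)|\le L$ together with $\dot w = -a(t)w$ gives, by the integrating factor, $w(t) = w(0)\exp\!\bigl(-\int_0^t a(s)\,ds\bigr) \ge w(0) e^{-Lt}$. Letting $t\uparrow t^*$ and using continuity of $w$ yields $w(t^*) \ge w(0)e^{-Lt^*} > 0$, contradicting $w(t^*)=0$. Hence $w(t) > 0$ for all $t\ge 0$; the case $w(0)<0$ is symmetric, and by monotonicity of the ODE flow one concludes $u(x,0)<u(y,0)\Rightarrow u(x,t)<u(y,t)$ for all $t\ge 0$.

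For preservation of equality: if $u(x,0)=u(y,0)$, then $w(0)=0$, and $w\equiv 0$ is a solution of the linear ODE $\dot w = -a(t)w$ with $a\in L^\infty_{\mathrm{loc}}$; uniqueness for this scalar linear ODE (again Gronwall) forces $w(t)\equiv 0$, i.e. $u(x,t)=u(y,t)$ for all $t$. Alternatively, one notes that at any $t$ the coefficient $a$ is bounded and applies the same exponential sandwich $|w(t)|\le |w(0)|e^{Lt}=0$.

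**The main subtlety** — and the only place requiring care — is the definition and regularity of $a(t)$ at instants where $w(t)=0$, i.e. making sure the "difference quotient" $a(t)$ is a legitimate bounded measurable (or at least locally integrable) function of $t$ so that Gronwall applies cleanly; this is handled by the locally-Lipschitz bound on $f$ noted above, which controls $a$ uniformly on bounded time intervals independently of whether the denominator vanishes. Everything else is a one-line integrating-factor estimate. (One should also record at the outset, or cite from the well-posedness discussion, that the solution remains in a fixed bounded subset of $\R$ uniformly on compact time intervals, so that the relevant Lipschitz constant $L$ exists.)
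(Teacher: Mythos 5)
Your proof is correct and is essentially the paper's argument written out in full: the paper simply observes that with $\bar f(t)$ regarded as given, $u(x,\cdot)$ and $u(y,\cdot)$ solve the same scalar ODE with locally Lipschitz right-hand side, so trajectories cannot cross or merge, and your Gronwall/integrating-factor computation on $w=u(y,\cdot)-u(x,\cdot)$ is precisely the standard proof of that uniqueness/non-crossing fact. The care you take with the difference quotient $a(t)$ at zeros of $w$ is sound and the exponential sandwich $w(0)e^{-Lt}\le w(t)$ (resp.\ $|w(t)|\le|w(0)|e^{Lt}$) delivers both the strict-inequality and equality claims.
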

    
    \begin{proof} This is a simple consequence of the fact that if we regard $\bar f(t)$ as given,
    then $u(x,t)$ and $u(y,t)$ satisfy the same scalar ODE with locally Lipschitz nonlinearity.
    \end{proof}

We call a set $S\subset\R$ {\em positively invariant} for \eqref{eq:ODE_nonlocal} if the condition
$u(x,0)\in S$ for all $x\in\Omega$ implies that $u(x,t)\in S$ for all $x\in\Omega$ and $t>0$.
For a given solution $u$,  we call a set $\hat S\subset \R$ {\em pointwise stable}
if  $u(\hat x,0)\in \hat S$ implies $u(\hat x,t)\in\hat S$ for all $t>0$,
for any (particular) $\hat x\in\Omega$. 
\begin{lemma} \label{lem:invar_rgn}
\begin{itemize}
    \item[(i)] (Positively invariant sets)  Let $[a,b]$ be a closed interval such that 
       \[f(a)\leq f(s) \leq f(b) \quad\text{ for all } s\in [a,b].\]
       Then $[a,b]$ is positively invariant.
       \item[(ii)] (Pointwise stable subsets) If further $[\hat a,\hat b]\subset [a,b]$ 
       with $f(\hat a)=f(a)$ and $f(\hat b)=f(b)$,
       then $[\hat a,\hat b]$ is pointwise stable for any solution with $u(x,0)\in [a,b]$ for all $x\in\Omega$.
  \end{itemize}
\end{lemma}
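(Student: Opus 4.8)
The plan is to exploit that, with $\bar f(t)$ regarded as given, the map $t\mapsto u(x,t)$ solves the scalar ODE $v'=-f(v)+\bar f(t)$ with locally Lipschitz nonlinearity; the only subtlety is that $\bar f(t)$ is a nonlocal average of $f\circ u$ and so couples back to the values of $u$. I would prove (i) by a quantitative first-exit-time estimate on a short time interval, then bootstrap, and deduce (ii) by a Gronwall estimate along the single trajectory $v(t)=u(\hat x,t)$.

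For (i): local well-posedness gives a solution on a maximal interval $[0,T_{\max})$ that is continuous into $B(\Omega)$, hence bounded on each $[0,T]$ with $T<T_{\max}$; fix such a $T$ and let $L$ be a Lipschitz constant for $f$ on a bounded interval containing $[a-1,b+1]$ together with the range of $u$ on $[0,T]$. The key a priori bound is: if every value $u(y,t)$ lies in $[a-\eps,b+\eps]$ at time $t$ (with $0<\eps<1$), then combining the hypothesis $f(a)\le f(s)\le f(b)$ on $[a,b]$ with the Lipschitz bound on $[a-\eps,a]$ and $[b,b+\eps]$ gives $f(a)-L\eps\le\bar f(t)\le f(b)+L\eps$. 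Set $T_0=1/(4L)$, and suppose toward a contradiction that some value leaves $[a-\eps,b+\eps]$ at a time in $(0,\min(T_0,T))$; let $\tau$ be the first such time. By continuity of $t\mapsto\sup_y u(y,t)$ and $t\mapsto\inf_y u(y,t)$ we may assume $\sup_y u(y,\tau)=b+\eps$ (the other case is symmetric), while all values stay in $[a-\eps,b+\eps]$ on $[0,\tau]$. Pick $\hat x$ with $u(\hat x,\tau)>b+\tfrac\eps2$, write $v(t)=u(\hat x,t)$, and let $s_0$ be the last time in $[0,\tau]$ with $v(s_0)=b$ (it exists since $v(0)\le b<v(\tau)$). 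On $(s_0,\tau]$ we have $v\in(b,b+\eps]$, so $v'=-f(v)+\bar f(t)\le(-f(b)+L\eps)+(f(b)+L\eps)=2L\eps$, and integrating from $s_0$ gives $v(\tau)\le b+2L\eps\,\tau\le b+2L\eps\,T_0=b+\tfrac\eps2$, a contradiction. Hence $u(x,t)\in[a-\eps,b+\eps]$ on $[0,\min(T_0,T)]$ for every $\eps>0$, i.e.\ $u(x,t)\in[a,b]$ there; restarting the argument from time $T_0$ (where again all values lie in $[a,b]$) and iterating covers all of $[0,T]$. Since $T<T_{\max}$ was arbitrary and a solution confined to $[a,b]$ cannot blow up, $T_{\max}=\infty$, which proves (i).

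For (ii): by (i) every $u(y,t)$ stays in $[a,b]$, so $f(a)\le\bar f(t)\le f(b)$ for all $t$. Fix $\hat x$ with $u(\hat x,0)\in[\hat a,\hat b]$ and set $v(t)=u(\hat x,t)$; by (i), $v(t)\in[a,b]$, and $v'=-f(v)+\bar f(t)$. To get $v(t)\le\hat b$ I would estimate $\chi(t)=(v(t)-\hat b)^+$, which vanishes at $t=0$: where $v>\hat b$, using $\bar f(t)\le f(b)=f(\hat b)$ and the Lipschitz bound, $\chi'=-f(v)+\bar f(t)\le f(\hat b)-f(v)\le L(v-\hat b)=L\chi$; where $v\le\hat b$, the right Dini derivative of $\chi$ is $\max(0,v'(t))=\max(0,-f(\hat b)+\bar f(t))=0$. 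Hence $e^{-Lt}\chi(t)$ is nonincreasing, starts at $0$, and is nonnegative, so $\chi\equiv0$, i.e.\ $v(t)\le\hat b$. The bound $v(t)\ge\hat a$ follows symmetrically from $\phi(t)=(\hat a-v(t))^+$ and $\bar f(t)\ge f(a)=f(\hat a)$. Together these give (ii).

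The only real obstacle is the nonlocal coupling in (i): a value cannot be ``the first to leave $[a,b]$'' and still leave $\bar f(t)$ trapped in $[f(a),f(b)]$, so a naive scalar comparison at $u=b$ fails — as a value crosses $b$ it drags $\bar f(t)$ above $f(b)$ and destroys the barrier. The $\eps$-buffer-and-bootstrap quantifies exactly this: an overshoot of the values by $\eps$ produces an overshoot of $\bar f$ by only $L\eps$, so the escape speed is $O(L\eps)$ and over a fixed time $\sim 1/L$ no value starting at $b$ can reach $b+\tfrac\eps2$; sending $\eps\to0$ closes the trap. Everything else — differentiating and integrating along the $C^1$ trajectory $v$, the Dini-derivative bookkeeping at the boundary, and the choice of $L$ — is routine.
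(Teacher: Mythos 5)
Your proof is correct, but it closes the key circularity by a different mechanism than the paper. The paper splits off the degenerate case where $f\circ u(\cdot,0)$ is a.e.\ constant (so the solution is essentially at equilibrium), and otherwise observes that $\bar f(0)$ lies \emph{strictly} inside $(f(a),f(b))$; it then tracks the first exit time $t_\ast$ of $\bar f(t)$ from this open interval, uses the strict inequalities $-f(u)+f(a)<\D_t u<-f(u)+f(b)$ on $[0,t_\ast)$ to get a genuine barrier at $u=a,b$ (and at $\hat a,\hat b$ for part (ii)), and finally rules out $t_\ast<\infty$ because $\bar f(t_\ast)\in\{f(a),f(b)\}$ would force $f(u(\cdot,t_\ast))$ to be a.e.\ constant, contradicting the non-degeneracy hypothesis. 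Your $\eps$-buffer argument instead quantifies the feedback: an overshoot of the values by $\eps$ perturbs $\bar f$ by only $L\eps$, so over a time window of length $1/(4L)$ no trajectory can climb from $b$ to $b+\eps/2$; sending $\eps\to0$ and bootstrapping closes the trap without any case split or strictness, and your Gronwall estimate on $(v-\hat b)^+$ for part (ii) likewise works even when $\bar f(t)$ touches $f(b)$. What each buys: the paper's route is shorter and yields as a by-product that $\bar f(t)$ remains strictly inside $(f(a),f(b))$ in the non-degenerate case; yours is more uniform and quantitative, avoiding the slightly delicate final step of the paper (where one must see why $f(u(\cdot,t_\ast))$ being a.e.\ constant contradicts the hypothesis at time $0$). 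Both rely on continuity of $t\mapsto u(\cdot,t)$ into $B(\Omega)$, which the well-posedness discussion supplies.
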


\begin{proof}
Let $u(0,x)\in[a,b]$ for all $x\in\Omega$. If $f(u(\cdot,0))$ is a.e.~constant,
then $\bar f(0)\in[f(a),f(b)]$ is this same constant.
So $u$ is at equilibrium a.e., and
trivially the invariance properties in parts (i) and (ii) hold. 

Suppose $f(u(\cdot,0))$ is not a.e.~constant. Then $u$ is not a.e.~at equilibrium,
and $\bar f(0)\in (f(a),f(b))$. Define  
\[
 t_\ast = \inf\{t>0: \bar f(t)\in\{f(a),f(b)\}\}.
 \]
This is the first exit time of $\bar f(t)$ from the interval $(f(a),f(b))$.
By continuity of $\bar f$, we know $t_\ast>0$. 

Next note that for any $x\in\Omega$ and  $t\in[0,t_\ast)$,
\[
-f(u(x,t))+ f(a) < -f(u(x,t))+\bar f(t) = 
\D_t u(x,t) < -f(u(x,t))+f(b).
\]
By consequence, $\D_t u(x,t)$ is positive if $u(x,t)=a$ (or $\hat a$) and 
negative if $u(x,t)=b$ (or $\hat b$).  
It follows $u(x,t)\in (a,b)$ for all $t\in(0,t_\ast)$, and all $x$.
Moreover if $u(x,0)$ is in $[\hat a,\hat b]$ then $u(x,t)$ remains there for all $t\in[0,t_\ast)$.

Now we claim $t_\ast=\infty$.  
If $t_\ast<\infty$, then by continuity $u(x,t_\ast)\in[a,b]$ and 
$f(a)\le f(u(x,t_\ast))\le f(b)$ for all $x$.  
But then 
$f(u(x,t_\ast))$ must a.e.~equal $f(a)$ if  $\bar f(t_\ast)=f(a)$, and 
must a.e.~equal $f(b)$ if  $\bar f(t_\ast)=f(b)$. 
This contradicts our hypothesis and establishes $t_\ast=\infty$. 
The invariance properties follow. 
\end{proof}

By this result, if  $[a,b]$ is an interval with the property stated
and the initial data $u(x,0)$ belong to this interval, then the solution to \eqref{eq:ODE_nonlocal} exists globally 
with $u(x,t)\in[a,b]$ for all $t\ge0$ and all $x\in\Omega$ \blue (cf. \cite[Section 3]{BallSengul15}).\nc

\section{The case of finite range: convergence via gradient inequalities}\label{s:finite-d}

Let $u$ take finitely many values $u_j$ on sets $\Omega_j\subset\Omega$ 
of measure $\mu_j$, $j=1,\ldots,N$, with $\sum_k \mu_k = 1$.
Our equation is then equivalent to the following system in $\R^N$:
\begin{equation}\label{eq:uRN}
 \frac{d}{dt}u_j(t) = -f(u_j(t)) + \bar f(t), \quad \blue j=1,\ldots,N,\nc
 \qquad 
 \bar f(t) = \sum_k \mu_k f(u_k(t)).
\end{equation}
We define a reduced energy for vectors $\uu=(u_j)\in\R^N$ by \purp restricting $\E$ to functions $u = \sum_k u_k\one_{\Omega_k}$,
writing \nc
\[
 E(\uu) \purp = \E\left(\sum_k u_k\one_{\Omega_k}\right) \nc = \sum_k \mu_k F(u_k) . %(F(u_k)-u_k) .
\]
With respect to the reduced \purp $L^2$-inner product \nc $\ip{\uu,\vv} = \sum_k \mu_ku_kv_k$,
\purp we obtain \nc the gradient $\nabla E(\uu) = (f(u_j))$, and \blue we may write \eqref{eq:uRN} in
the vector form \nc 
%\fix{do we want to add a subscript in $j$ to have $(-)_j$ in the rightmost expression below?}
\[
\frac{d\uu}{dt} = -Q\nabla E(\uu(t)), 
\qquad
Q\vv = \vv - \one\ip{\one,\vv} = \left(v_j - \sum_k \mu_kv_k\right)  ,
\qquad \purp \one = (1,\ldots,1).\nc
\]
Here $Q$ is the orthogonal projection on the subspace \blue where $\ip{\one,v}=\sum_k \mu_k v_k = 0$.\nc

Recall that we assume $f$ is locally Lipschitz and piecewise $C^1$. Our goal in this
section is to provide a simplified proof of the following theorem from \cite{Pego92}.
\begin{theorem}\label{t:finitedim}
    If $\uu\colon[0,\infty)\to\R^N$ is a bounded solution of \eqref{eq:uRN},
    then $\lim_{t\to\infty} \uu(t)$ exists.
\end{theorem}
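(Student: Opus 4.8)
The plan is to exploit the constrained gradient structure together with a Łojasiewicz-type gradient inequality valid near the equilibria that show up in the $\omega$-limit set. First I would record the standing facts already established in the excerpt: the energy $E(\uu(t))$ is nonincreasing, $\int_0^\infty \|\tfrac{d\uu}{dt}\|^2\,dt < \infty$, and $\|\tfrac{d\uu}{dt}\|^2 \to 0$ as $t\to\infty$ (this uses boundedness and the uniform Lipschitz bound on $\tfrac{d\uu}{dt}$ coming from \eqref{eq:uRN}). Consequently the $\omega$-limit set $\omega(\uu)$ is a nonempty, compact, connected subset of $\R^N$ consisting entirely of equilibria, i.e.\ points $\hat\uu$ with $f(\hat u_j)$ independent of $j$; and $E$ is constant, equal to some $E_\infty$, on $\omega(\uu)$.

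Next I would reduce to a dichotomy on the limiting value $\bar f_\infty$. Since $\bar f(t) = \sum_k \mu_k f(u_k(t))$ and $\tfrac{d}{dt}u_j = -f(u_j) + \bar f(t) \to 0$ along any convergent time-subsequence approaching a point of $\omega(\uu)$, every $\hat\uu \in \omega(\uu)$ satisfies $f(\hat u_j) = \bar f(\hat\uu)$ for all $j$. The crux is: either $\bar f(t)$ converges as $t\to\infty$, or it does not. Suppose first that $\bar f(t)$ fails to converge. Then I would show $\omega(\uu)$ must contain an equilibrium $\hat\uu$ all of whose components $\hat u_j$ are \emph{regular} points of $f$ (i.e.\ $f$ is $C^1$ near $\hat u_j$ with $f'(\hat u_j)\neq 0$) — informally, a ``regular'' equilibrium on a normally hyperbolic curve — by arguing that the set of $\bar f$-values corresponding to degenerate configurations is small (finite, using that $f$ is piecewise $C^1$ with finitely many critical values on the relevant compact range), so oscillation of $\bar f$ forces the orbit to linger near a regular equilibrium curve. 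Near such $\hat\uu$, by the implicit function theorem the equilibrium set is locally a $C^1$ curve $\gamma$ parametrized by the common value $s = f(\hat u_j)$, with $u_j = z_j(s)$, and a straightforward Taylor expansion of $E$ about $\gamma$ — expanding $F$ to second order and using $F'=f$, $f'(z_j)\neq 0$ — yields the gradient inequality \eqref{i:grad1}, namely $c|E(\uu)-E_\infty|^{1/2} \le \|Q\nabla E(\uu)\|$ for $\uu$ near $\gamma$. Feeding this into $\tfrac{d}{dt}E(\uu) = -\|Q\nabla E(\uu)\|^2 \le -c^2 |E(\uu)-E_\infty|$ and the usual Łojasiewicz argument bounds the orbit length $\int \|\tfrac{d\uu}{dt}\|\,dt < \infty$ near $\gamma$, forcing convergence of $\uu(t)$ — and in particular of $\bar f(t)$ — which contradicts non-convergence of $\bar f$. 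Hence $\bar f(t)$ must converge, say to $\bar f_\infty$.

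Finally, with $\bar f(t) \to \bar f_\infty$ in hand, I would upgrade to convergence of $\uu$ itself. Fix $\varepsilon>0$; for large $t$, each scalar component obeys $\tfrac{d}{dt}u_j = -f(u_j) + \bar f_\infty + o(1)$, an asymptotically autonomous scalar ODE whose limiting right-hand side $g_j(v) := -f(v) + \bar f_\infty$ has isolated zeros on the bounded range of $u_j$ (since $f$ is piecewise $C^1$). A bounded solution of such a scalar ODE converges to a single zero of $g_j$: either $u_j$ is eventually monotone, or it oscillates, but oscillation between distinct zeros is impossible because $\{v : g_j(v)=0\}$ being finite means there is a gap the trajectory would have to cross against the sign of $g_j + o(1)$. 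Doing this for each $j=1,\dots,N$ gives $\uu(t) \to \hat\uu$. The step I expect to be the main obstacle is establishing, under the non-convergence hypothesis on $\bar f$, that the orbit actually approaches a \emph{regular} equilibrium curve to which the Taylor-based gradient inequality \eqref{i:grad1} applies — i.e.\ ruling out that $\omega(\uu)$ consists only of degenerate equilibria sitting at critical values of $f$; this is where the interplay between the finitely many critical values of the piecewise-$C^1$ function $f$ and the connectedness of $\omega(\uu)$ must be used carefully.
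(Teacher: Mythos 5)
Your first step---proving that $\bar f(t)$ converges by locating a regular equilibrium in $\omega(\uu)$ via Sard's theorem, establishing a Taylor-expansion gradient inequality near the associated curve of equilibria, and running the \Loja\ finite-length argument---is essentially the paper's Lemma~\ref{lem:barf}. One caveat: the Taylor expansion by itself only yields an inequality of the form
$c\,|E(\uu)-E(\phib(s))- s\ip{\one,\uu-\phib(s)}| \le \|Q\nabla E(\uu)\|^2$ relative to the nearby point $\phib(s)$ on the equilibrium curve; to convert this into $c\,|E(\uu)-E_\infty|^{1/2}\le\|Q\nabla E(\uu)\|$ you must also know that $E(\phib(s))=E_\infty$ and that $\sum_j\mu_j\phi_j(s)$ equals the conserved mean, for all $s$ in the relevant interval. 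This is where the non-convergence hypothesis on $\bar f$ is really used: together with connectedness of $\omega(\uu)$ it forces a whole arc of the curve $\{\phib(s)\}$ into $\omega(\uu)$, which supplies exactly those identities. Your sketch glosses over this, but it is repairable and matches the paper.

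The genuine gap is in your final step. You assert that $g_j(v):=-f(v)+\bar f_\infty$ has isolated (indeed finitely many) zeros on the range of $u_j$ ``since $f$ is piecewise $C^1$,'' and conclude that each component cannot oscillate. This is false: a locally Lipschitz, piecewise $C^1$ function can be constant on an interval, so $g_j$ can vanish identically on a nondegenerate interval. Your own oscillation argument, applied correctly, shows only that if $a_j=\liminf u_j<\limsup u_j=b_j$ then $f\equiv\bar f_\infty$ on $[a_j,b_j]$ (any point $c\in(a_j,b_j)$ with $g_j(c)\ne0$ would be crossed in the wrong direction); it does not rule out oscillation within such a flat interval, because there $\D_t u_j=\bar f(t)-\bar f_\infty=o(1)$ and $\int|\bar f-\bar f_\infty|^2\,dt<\infty$ do not force $\int(\bar f-\bar f_\infty)\,dt$ to converge. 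The missing idea---and the substantive content of the paper's proof, which the paper notes corrects a gap in \cite{Pego92}---is that all components sitting in flat intervals move in \emph{synchrony}, $\D_t u_j=\D_t u_k=\bar f(t)-\bar f_\infty$, so a persistent oscillation of amplitude $\eps$ shared by a maximal set $S$ of such components, combined with the vanishing oscillation of the components outside $S$ on the same time intervals, violates conservation of $\sum_k\mu_k u_k$. The contradiction must come from the coupling through mass conservation, not from the scalar dynamics of a single component.
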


Our simplified proof avoids a spectral analysis of curves of equilibria of \eqref{eq:uRN}
and the use of the Hale-Massat theorem. Instead we rely on the gradient inequality
contained in the following lemma.  Its proof involves a simple Taylor approximation
argument near \purp curves of \nc ``regular equilibria,'' which stands
in contrast to general \Loja\ inequalities valid near arbitrary equilibria 
for energies that are analytic, semi-algebraic, or more generally
definable in an o-minimal structure~\cite{AttouchEA2013}.

We recall as in \cite{Pego92} that by Sard's theorem, the set of {\em regular values} of $f$ 
in any bounded interval of $\R$ is open and dense. 
If $\hat s$ is a regular value of $f$, 
then  the equation $f(z)=s$ has a finite number of solutions 
$z_i(s)$ at which  $f'(z_i(s))\ne0$,  
for all $s$ in some neighborhood $\hat J$ of $\hat s$. 
We will call $\hat\uu\in\R^N$ a {\em regular equilibrium} for  \eqref{eq:uRN}
if $\hat s=f(\hat u_j)$ is independent of $j$ and is a regular value of $f$.
In this case, then for each $j$ there exists $i(j)$ such that $\hat u_j=z_{i(j)}(\hat s)$.
We define $\phib(s)=(z_{i(j)}(s))$ for $s\in\hat J$; then 
$s\mapsto \phib(s)$ is a curve of regular equilibria and 
$\phib(\hat s)=\hat\uu$. 
\begin{lemma}\label{lem:gradE}
 Let $\hat \uu\in\R^N$ be a regular equilibrium for \eqref{eq:uRN} as above.
 Then in some neighborhood $\calN$ of $\hat \uu$, all equilibria of \eqref{eq:uRN} have the form 
 $\phib(s)$ for some $s\in \hat J$, and 
 {\purp moreover: \nc}
 \begin{itemize}
\item[(i)] all states $\uu\in\calN$ satisfy the gradient inequality
 \[
 c|E(\uu)-E(\phib(s))- s\ip{\one,\uu-\phib(s)}| \le \|Q\nabla E(\uu)\|^2\,,
 \quad 
s=\sum_j \mu_j f(u_j), 
 \]
for some constant $c>0$ independent of $\uu$. 
\item[(ii)]
{\purp 
If $\ip{\one,\phib(s)-\hat\uu}=0$ for all $s\in \hat J$, then $E(\phib(s))\equiv E(\hat \uu)$ and
for all $\uu\in\calN$ with $\ip{\one,\uu-\hat \uu}=0$ we have \nc}
\[
{\purp c|E(\uu)-E(\hat \uu)| \le \|Q\nabla E(\uu)\|^2\,. \nc}
\]
\end{itemize}
\end{lemma}
\begin{remark}\label{rmk:LojaN} 
    The inequality in part (ii) of this Lemma can be interpreted as a \Loja\ inequality in the constrained-mean hypersurface 
    $\M=\{\uu\in\R^N: \sum_j\mu_j(u_j-\hat u_j)=0\}$, since the gradient of $E$ restricted to this surface
    can be interpreted as the projection $Q\nabla E$ on the tangent space. 
    \purp The proof we give below is simple and direct.
    An alternative proof could be given by showing that the curve of equilibria $\phib(s)$ 
    satisfies certain nondegeneracy properties within the hypersurface $\M$, and 
    applying, say, Theorem~2.1 of~\cite{HarauxJendoubi2007}, 
    or the reduction methods of Simon~\cite{Simon96} or Chill~\cite{Chill2003} mentioned in the introduction.
    The required nondegeneracy properties are somewhat involved to establish, though, 
    due to the fact that the eigenvalue $\lambda=0$ of the full Jacobian matrix $\D Q\nabla E/\D\uu$
   at $\hat\uu$ in $\R^N$ is not algebraically simple~\cite[Lemma~2]{Pego92}.\nc
\end{remark}

\begin{proof}
  For any equilibrium $\uueq$ in a small enough neighborhood $\calN$ of $\hat\uu$, 
  $s=f(\ueq_j)$ is independent of $j$ 
  and near $\hat s$, so necessarily $\ueq_j=z_{i(j)}(s)$ by  the inverse function theorem.
  Taking $\calN$ smaller if necessary, for any $\uu\in\calN$ we may let 
  \[
   s= \sum_j\mu_jf(u_j), \qquad \vv = \uu - \phib(s),
  \]
and we may find constants $0<\underline\lambda <\overline\lambda<\infty$ such that 
  $\underline\lambda<|f'(u_j)|<\overline{\lambda}$ for all $\uu\in\calN$ and all $j$.
  By Taylor's theorem we may write
  \begin{align}
      F(u_j) &= F(\phi_j(s))+ f(\phi_j(s))v_j+ \frac12\ell_j(\uu)v_j^2 \,,
      \qquad
      f(u_j) = f(\phi_j(s))+ \hat\ell_j(\uu)v_j \,,
  \end{align}
  where  %\fix{check coefs}
  \begin{equation}
      \ell_j(\uu) = 2\int_0^1 f'(\phi_j(s)+rv)(1-r)\,dr\,,
      \qquad
      \hat \ell_j(\uu) = \int_0^1 f'(\phi_j(s)+rv)\,dr\,.
  \end{equation} 
  The bounds $\underline\lambda<|\ell_j(\uu)|,|\hat \ell_j(\uu)|<\overline\lambda$
  hold for all $\uu\in\calN$.   Then since $s=f(\phi_j(s))$ we have
  \begin{align*}
     E(\uu)-E(\phib(s)) &= \sum_j\mu_j \Bigl( F(u_j)-F(\phi_j(s)) \Bigr)
    % \\ &
     = \sum_j \mu_j \Bigl( f(\phi_j(s))v_j + \frac12\ell_j(\uu)v_j^2 \Bigr)
     \\ &= s\sum_j \mu_j v_j + 
     \frac12 \sum_j \mu_j \ell_j(\uu) v_j^2 \,.
  \end{align*}
  Since also $s=\sum_k \mu_k f(u_k)$,  we find 
  %$Q\nabla E(\phib(s))=0$ and 
  \begin{align*}
      Q\nabla E(\uu)_j &= 
      f(u_j)- \sum_k \mu_k f(u_k) = 
      f(u_j)-f(\phi_j(s)) 
    =  \hat\ell_j(\uu) v_j \,,
  \end{align*}
  hence $\|Q\nabla E(\uu)\|^2  = \sum_j \mu_j \hat\ell_j(\uu)^2 v_j^2 $.
Evidently we have the estimates
 \[
\Bigl|\sum_j \mu_j \ell_j(\uu) v_j^2 \Bigr| \  \le 
     \ \overline\lambda \sum_j \mu_j v_j^2
    \ \le \ \frac{\overline\lambda}{\underline\lambda^2} \sum_j \mu_j \hat\ell_j(\uu)^2 v_j^2 \,,
 \]
whence the result claimed in \purp part (i) of \nc the Lemma follows with $c=\frac12\underline\lambda^2/\overline\lambda$.

\purp 
If $\ip{\one,\phib(s)}$ is constant in $s$, then because $f(\phi_k(s))=s$, 
\[
\frac{d}{ds} E(\phib(s)) = \frac{d}{ds} \sum_k \mu_k F(\phi_k(s)) = \sum_k \mu_k f(\phi_k(s))\phi_k'(s) = s \frac{d}{ds}\ip{\one,\phib(s)} = 0.
\]
Hence $E(\phib(s))\equiv E(\phib(\hat s))=E(\hat\uu)$, and the rest of part (ii) follows from part (i).\nc
\end{proof}

The next (and main) step in the proof of Theorem~\ref{t:finitedim} is to show that $\bar f(t)$ converges. 
This is as in \cite{Pego92}, but now the proof is much simpler. 

\begin{lemma}\label{lem:barf}
    If $\uu\colon\R^N\times[0,\infty)$ is a bounded solution of \eqref{eq:uRN},
    then $\lim_{t\to\infty}\bar f(t)$ exists. 
\end{lemma}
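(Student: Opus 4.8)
The plan is to argue by contradiction, reducing Lemma~\ref{lem:barf} to a \Loja-type convergence argument built on the gradient inequality of Lemma~\ref{lem:gradE}. I would first record the dissipation facts in the reduced coordinates: $E(\uu(t))$ is non-increasing with $\frac{d}{dt}E(\uu(t))=-\|\dot\uu(t)\|^2$, so $E_\infty:=\lim_{t\to\infty}E(\uu(t))$ exists and $\int_0^\infty\|\dot\uu\|^2\,dt<\infty$; as $\dot\uu$ is Lipschitz in $t$ this forces $\|\dot\uu(t)\|\to0$; and the mass $m:=\ip{\one,\uu(t)}$ is conserved. Hence the $\omega$-limit set $\omega$ of the bounded orbit is non-empty, compact and connected, consists of equilibria, and carries the constant values $E\equiv E_\infty$ and $\ip{\one,\cdot}\equiv m$. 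Assuming $\bar f(t)=s(\uu(t))$, with $s(\uu):=\sum_j\mu_jf(u_j)$, does not converge, set $s_-:=\liminf_{t\to\infty}\bar f(t)<\limsup_{t\to\infty}\bar f(t)=:s_+$; since $\uu(t)$ approaches $\omega$ and $s$ is continuous, the cluster set of $\bar f$ equals $s(\omega)$, which is connected and hence is $[s_-,s_+]$. In particular $\omega$ has more than one point.

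The central step is to select a convenient regular equilibrium of $\omega$. Regular values of $f$ are dense (Sard), and near each regular equilibrium all equilibria lie on a smooth curve $\phib$, as in the setup preceding Lemma~\ref{lem:gradE}. Combining this local structure with connectedness of the non-trivial $\omega$ and the fact that $s(\omega)=[s_-,s_+]$ has positive length, while equilibria whose common value is a critical value of $f$ occupy only an $s$-set of measure zero, one sees that $\omega$ contains a nondegenerate arc of equilibria. Choosing a regular value $\hat s$ in its interior and letting $\hat\uu:=\phib(\hat s)\in\omega$ be the corresponding equilibrium, the whole arc consists of equilibria of the conserved mass $m$, so $\ip{\one,\phib(s)}=m$ for $s$ near $\hat s$; consequently $\frac{d}{ds}E(\phib(s))=\ip{\nabla E(\phib(s)),\phib'(s)}=s\frac{d}{ds}\ip{\one,\phib(s)}=0$, so $E(\phib(s))\equiv E(\hat\uu)=E_\infty$ near $\hat s$. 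I expect this step---pinning $\hat\uu$ down from connectedness of $\omega$---to be the main obstacle, since the identity $\ip{\one,\phib}\equiv m$ is precisely what is needed below: along the orbit one computes
\[
\frac{d}{dt}\Bigl[E(\uu(t))-E(\phib(\bar f(t)))-\bar f(t)\,\ip{\one,\uu(t)-\phib(\bar f(t))}\Bigr]=-\|\dot\uu(t)\|^2-\dot{\bar f}(t)\bigl(m-\ip{\one,\phib(\bar f(t))}\bigr),
\]
and the last term is comparable to $\|\dot\uu\|^2$ and of no fixed sign, so it has to be arranged to vanish.

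With $\hat\uu$ in hand I apply Lemma~\ref{lem:gradE} at $\hat\uu$ to obtain a neighborhood $\calN$ and $c>0$, shrinking $\calN$ so that $s(\uu)$ remains near $\hat s$ for $\uu\in\calN$. For $\uu(t)\in\calN$, conservation gives $\ip{\one,\uu(t)-\phib(\bar f(t))}=m-\ip{\one,\phib(\bar f(t))}=0$, so the quantity bounded in Lemma~\ref{lem:gradE} collapses to $E(\uu(t))-E_\infty\ge0$, and that lemma becomes the \Loja-type bound
\[
c\bigl(E(\uu(t))-E_\infty\bigr)\le\|Q\nabla E(\uu(t))\|^2=\|\dot\uu(t)\|^2=-\frac{d}{dt}E(\uu(t)),\qquad\uu(t)\in\calN.
\]
Then the standard argument applies: when $\uu(t)\in\calN$ and $\dot\uu(t)\ne0$,
\[
\frac{d}{dt}\bigl(E(\uu(t))-E_\infty\bigr)^{1/2}=\frac{-\|\dot\uu(t)\|^2}{2\bigl(E(\uu(t))-E_\infty\bigr)^{1/2}}\le-\frac{\sqrt c}{2}\,\|\dot\uu(t)\|,
\]
so on any time interval $[\tau,\tau']$ on which the orbit stays in $\calN$, $\int_\tau^{\tau'}\|\dot\uu\|\,dt\le\frac{2}{\sqrt c}\bigl(E(\uu(\tau))-E_\infty\bigr)^{1/2}$. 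Since $\hat\uu\in\omega$, there are times $t_n\to\infty$ with $\uu(t_n)\to\hat\uu$ and $E(\uu(t_n))-E_\infty\to0$; for $n$ large this bound is less than half the distance from $\hat\uu$ to $\partial\calN$, so the orbit cannot leave $\calN$ after $t_n$, whence $\int_{t_n}^\infty\|\dot\uu\|\,dt<\infty$, $\uu(t)$ converges, and therefore so does $\bar f(t)=s(\uu(t))$---contradicting $s_-<s_+$.
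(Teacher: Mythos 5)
Your proof is correct and follows essentially the same route as the paper's: locate a regular equilibrium in the connected $\omega$-limit set, use connectedness to place a nondegenerate arc of the equilibrium curve inside $\omega$ so that mass and energy are constant along it, and then observe that the gradient inequality of Lemma~\ref{lem:gradE} collapses to a \Loja\ inequality that traps the orbit near that equilibrium. The only cosmetic difference is that you conclude by deducing convergence of $\uu(t)$ itself, whereas the paper derives the contradiction $(\liminf \bar f,\limsup\bar f)\subset\hat J$ directly; your handling of the choice of $\hat s$ in the interior of the arc is, if anything, slightly more careful than the paper's.
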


\begin{proof}
    Suppose not. Then the interval $(\liminf\bar f,\limsup\bar f)$  is nonempty and strictly contains some interval $\hat J$
    of regular values of $f$, by Sard's theorem as above. Fixing some $\hat s\in\hat J$, using the compactness of
    the orbit we can find a sequence $t_n\to\infty$ such that $\bar f(t_n)=\hat s$ and $\uu(t_n)$ converges
    to some regular equilibrium $\hat \uu\in \omega(\uu)$.  Then because $\omega(\uu)$ is connected and 
    $\hat\uu$ cannot be isolated in $\omega(\uu)$, 
    by taking $\hat J$ smaller and on one side of $\hat s$ if necessary, the curve of equilibria $\{\phib(s):s\in\hat J\}$ 
    provided by the Lemma will be entirely contained in $\omega(\uu)$. 

    By consequence, we infer that for all $s\in\hat J$,
    \begin{equation}\label{e:s_id}
        E(\phib(s))=E_* \quad\text{and}\quad \sum_j\mu_j\phi_j(s) = c_0 = \sum_j \mu_j u_j(t) \,,
    \end{equation}
    where $E_* = \lim_{t\to\infty}E(\uu(t))$. By the result of the Lemma, then, we have 
    \begin{equation}\label{eq:LojaN}
     0< \hat c \sqrt{ E(\uu(t))-E_*}  \le \|Q\nabla E(\uu(t))\|  
    \end{equation}
    whenever $\uu(t)\in\calN$, a small enough neighborhood of $\hat u$. 
    But then, by the classic argument of \Loja,
    and because $Q=Q^2$ is self-adjoint,
    \begin{align}
        - \frac{d}{dt}
        \sqrt{E(\uu(t))-E_*} 
%        = -\frac{\ip{\nabla E(\uu(t)),\D_t \uu}} {2 \sqrt{E(\uu(t))-E_*} }
        &= \frac{\ip{\nabla E(\uu(t)),Q^2\nabla E(\uu(t))}}
        {2 \sqrt{E(\uu(t))-E_*} }
        = \frac{\|Q\nabla E(\uu(t))\|\|\D_t \uu\|}
        {2 \sqrt{E(\uu(t))-E_*} }
        \ge \frac{\hat c}{2} \|\D_t \uu\|.
    \end{align}
    On any interval $[t_n,T]$ on which $\uu(t)\in\calN$  it follows  
    \[
    \|\uu(T)-\uu(t_n)\| \le \int_{t_n}^T \|\D_t \uu(\tau)\|\,d\tau \le C \sqrt{E(\uu(t_n)-E_*}.
    \]
    For large enough $n$, the right-hand side becomes arbitrarily small 
    and it follows $\uu(t)$ remains inside $\calN$ for all $t\ge t_n$. This 
    implies $(\liminf \bar f,\limsup\bar f)\subset \hat J$,  a contradiction.
    Hence $\lim_{t\to\infty}\bar f(t)$ exists.
\end{proof}

The remainder of the proof of Theorem~\ref{t:finitedim} goes as in \cite{Pego92}, in principle.
However, the proof in that paper appears to have a gap (in Lemma 3 in particular), so we
provide a full corrected proof here for the convenience of the reader.

\begin{proof}[Proof of Theorem~\ref{t:finitedim}]
Suppose for contradiction that some bounded solution $\uu$ of \eqref{eq:uRN} fails to converge. 
Then $a_j<b_j$ for some $j$, where
\[
a_j = \liminf u_j(t), \quad b_j = \limsup u_j(t) \,, \quad j=1,\ldots,N.
\]
Due to Lemma~\ref{lem:barf}, by adding a constant to $f$ we may assume $\bar f(t)\to0$ as $t\to\infty$.
By considering times $t_{n,j}\to\infty$ such that $u_j(t_{n,j})$ takes given limits inside $(a_j,b_j)$,
we infer $f(v)=0$ for all $v\in \bigcup_j [a_j,b_j]$.  

The idea of the remainder of the proof is that mass conservation $\sum_k \mu_k u_k(t)=c_0$ must become
violated, due to the synchrony implied by the  equations $\D_tu_j=\bar f(t)=\D_t u_k$ 
which must hold whenever $u_j$ and $u_k$ are respectively inside 
any nonempty open intervals $(a_j,b_j)$, $(a_k,b_k)$.

Select a point $\vv$ in $\omega(\uu)$ such that $v_j\in(a_j,b_j)$
for $j$ in some {\em maximal} set $S$ of indices.   
With the notation $B(x,r)=[x-r,x+r]$, choose $\eps>0$ so that
$B(v_j,2\eps)\subset (a_j,b_j)$ for all $j\in S$,
and select $t_n\to\infty$ such that $\uu(t_n)\to\vv$ as $n\to\infty$
and $u_j(t_n)\in B(v_j,\eps)$ for all $j$ and $n$. 
Now fix some $i\in S$ and define
\[
T_n = \inf\{t>t_n: |u_i(t)-u_i(t_n)|>\eps\}, \quad I_n = [t_n,T_n]. 
\]
Then for all $n$, $t_n<T_n<\infty$ and $u_i(t)\in B(v_i,2\eps)$ for all $t\in I_n$.  
Moreover, for any $j\in S$, by synchrony we have 
\[
u_j(t)-u_j(t_n) = u_i(t)-u_i(t_n) \in [-\eps,\eps]
\quad\text{and}\quad  u_j(t)\in B(v_j,2\eps)
%\quad\text{for all $t\in I_n$},
\]
{for all $t\in I_n$}.
In particular, when $t=T_n$ it follows there is a fixed sign $\sigma\in\{-1,+1\}$  such that 
\begin{equation}\label{e:sigma}
u_j(T_n)-u_j(t_n) = u_i(T_n)-u_i(t_n) = \sigma\eps .
\end{equation}
By passing to a subsequence we may presume this holds for all $n$ with $\sigma$ independent of $n$.

We claim next that for all indices $k\notin S$,  
\begin{equation} \label{e:osc}
\osc_{I_n} u_k \to 0 \quad\text{as $n\to\infty$},
\end{equation}
where $\osc$ is the oscillation---supremum minus infimum on the indicated interval.
Suppose not. Then for some $k$, $\osc_{I_n} u_k\ge\hat\eps>0$ for infinitely many $n$. 
Hence $b_k-a_k\ge\hat\eps_n$, and by continuity there exist $\tau_n\in I_n$
such that $u_k(\tau_n)=\hat v_k$ for some $\hat v_k\in(a_k,b_k)$.
We may extract a suitable subsequence such that $u_j(\tau_n)$ converges
to some $\hat v_j$ for all $j$.
In particular we find $\hat v_j\in (a_j,b_j)$ for all $j\in S\cup\{k\}$.
This contradicts the maximality of $S$. Hence \eqref{e:osc} holds.

From this it follows $u_k(T_n)-u_k(t_n)\to0$ for all $k\notin S$.
Along the appropriate subsequence then,
mass conservation together with \eqref{e:sigma} implies
\[
\sum_j \mu_jv_j = \lim\sum_j\mu_ju_j(t_n) = \lim\sum_j\mu_j u_j(T_n) = 
\sigma\eps \#S + \sum_j \mu_jv_j,
\]
where $\# S$ is the cardinality of $S$.
This contradiction implies $\uu(t)$ tends to a limit.
\end{proof}

 \section{Non-convergence: the piecewise-linear case}\label{s:nonconvergence-PL}
  In this section we describe solutions to \eqref{eq:ODE_nonlocal} that do not converge as $t\rightarrow\infty$,
for the case when $f$ is piecewise linear with $N$-shaped graph, given by
\begin{equation} \label{d:fPL}
    f(z) = \begin{cases}
        z+1 & z<-\frac12,\cr  
        -z  & |z|<\frac12,\cr
        z-1 & z>\frac12.
    \end{cases}
\end{equation} 
For $|s|<\frac12$, the equation $f(z)=s$ has the three solutions $z_l(s)=-1+s$, $z_m(s)=-s$, and $z_r(s)=1+s$.
Since $z_l+2z_m+z_r \equiv 0$, we see $f$ fails to satisfy the nondegeneracy condition of Andrews and Ball~\cite{AndBall82};
this will be crucial in our construction.
%\blue 
We presume the probability measure $\nu$ is nonatomic. This implies that given any countable set $(\mu_j)$ with $\sum \mu_j=1$, 
there exists a measurable partition $(\Omega_j)$ of $\Omega$ such that $\nu(\Omega_j)=\mu_j$ for all $j$. 
(This follows since $\nu$ has the ``Darboux property,'' see \cite[p.~28]{Dinculeanu67} and  \cite[p.~174(2)]{Halmos_book}.)
%\nc

\subsection{Equilibria, and phase transition times}
{\em Equilibria.}
With $f$ as in \eqref{d:fPL},
equation~\eqref{eq:ODE_nonlocal} has a family of equilibria $\hat u_s$ satisfying 
$f(\hat u_s(x))\equiv s$ 
for any constant $s\in(-\half,\half)$, with 
$\hat u_s(x)= z_j(s)$ on sets $\hat\Omega_j$ of measure denoted $\hat\mu_j$ for $j=l,m,r$
to indicate the left, middle, and right phases, respectively. 
We fix the particular values 
\begin{equation}\label{d:hatmu}
\hat \mu_l=\tfrac14, \quad \hat \mu_m=\tfrac12,  \quad  \hat \mu_r=\tfrac14, 
\end{equation}
so that all these equilibria have mean zero, i.e.,
\[
\int_\Omega \hat u_s(x)\,d\nu(x) = 0, \qquad\text{independent of $s$.}   
\]
Our goal in this section is to describe a solution that has some nontrivial collection of these 
equilibria in its $\omega$-limit set (in the $L^2$ topology).

{\em Phases and transition times.}
In this section, we will only consider solutions taking values in  the interval $[a,b]=[-\frac32,\frac32]$, 
which is positively invariant according to Lemma~\ref{lem:invar_rgn}. 
For the remainder of this section we fix the values
\[
a=-\tfrac32, \quad \hat b=-\tfrac12, \quad\hat a=\tfrac12, \quad b=\tfrac32,
\]
and define left, middle, and right phase intervals respectively by
\begin{equation}
\Phi_l = [a,\hat b] ,\quad \Phi_m = (\hat b,\hat a) ,\quad \Phi_r = [\hat a,b]\,.
\end{equation}
For the solutions we consider, the left and right  phase subintervals $\Phi_l$ and $\Phi_r$ are each pointwise stable.
We define measures of sets corresponding to the left, middle, and right phases by
\begin{equation}\label{d:Aj}
\nu_j(t) = \nu(A_j(t)), \quad 
A_j(t) = \{x\in\Omega: u(x,t)\in\Phi_j\},  
\end{equation}
for each symbol $j=l, m, r$ respectively. 
Then by pointwise stability, the left and right phases $A_l(t)$ and $A_r(t)$ and their measures are nondecreasing,
while the middle phase $A_m(t)$ and its measure $\nu_m(t)$ are nonincreasing.
Consequently a {transition time} (exit time) from the middle phase exists at each point, as follows.
\begin{lemma}[Phase transition times]\label{lem:tau}
    For each $x$ with $u(x,0)\in\Phi_m$, there exists $\tau(x)\in(0,\infty]$ such that 
  \[
  u(x,t)\in \begin{cases}\Phi_m, & 0\le t<\tau(x),\cr  \Phi_l\cup \Phi_r, & t\ge\tau(x). \end{cases}
  \] 
\end{lemma}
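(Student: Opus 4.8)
The key structural fact is already in hand: by Lemma~\ref{lem:invar_rgn}(ii), the subintervals $\Phi_l = [a,\hat b]$ and $\Phi_r = [\hat a,b]$ are pointwise stable for any solution with values in $[a,b]=[-\frac32,\frac32]$, because $f(a)=f(\hat b)=-\frac12$ and $f(b)=f(\hat a)=\frac12$. So for a fixed $x$ with $u(x,0)\in\Phi_m=(\hat b,\hat a)$, the plan is to simply set
\[
\tau(x) = \inf\{t>0 : u(x,t)\notin\Phi_m\} = \inf\{t>0 : u(x,t)\in\Phi_l\cup\Phi_r\},
\]
with the convention $\tau(x)=\infty$ if the set is empty, and then verify the two asserted inclusions.

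\textbf{Key steps.}
First I would observe that $\Phi_l\cup\Phi_r = [a,\hat b]\cup[\hat a,b]$ is a closed set and $u(x,\cdot)$ is continuous (indeed $C^1$) in $t$, so if the infimum defining $\tau(x)$ is finite then $u(x,\tau(x))\in\Phi_l\cup\Phi_r$, and by definition of the infimum $u(x,t)\in\Phi_m$ for $0\le t<\tau(x)$. This already gives the first line of the claimed dichotomy and the membership $u(x,\tau(x))\in\Phi_l\cup\Phi_r$. It remains to show that once the trajectory enters $\Phi_l\cup\Phi_r$ it never returns to $\Phi_m$, i.e.\ $u(x,t)\in\Phi_l\cup\Phi_r$ for all $t\ge\tau(x)$. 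For this I invoke pointwise stability: at time $t=\tau(x)$ the point value $u(x,\tau(x))$ lies in $\Phi_l$ or in $\Phi_r$; since each of these is pointwise stable (Lemma~\ref{lem:invar_rgn}(ii), applied with the shift of time origin to $\tau(x)$, which is legitimate because the equation is autonomous and $u(\cdot,\tau(x))$ again takes values in $[a,b]$), the value $u(x,t)$ remains in that same interval for all $t\ge\tau(x)$. Combining, $u(x,t)\in\Phi_l\cup\Phi_r$ for all $t\ge\tau(x)$, which is the second line of the dichotomy. Finally, if $\tau(x)=\infty$ the first line holds vacuously for all $t\ge0$ and there is nothing in the second line to check, so the statement is true in that case too.

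\textbf{Main obstacle.}
There is no deep obstacle; the only point requiring care is the application of pointwise stability at the shifted time $\tau(x)$. One must note that pointwise stability in Lemma~\ref{lem:invar_rgn}(ii) is stated for initial data, whereas here we want to apply it starting from time $\tau(x)$. This is resolved by the autonomy of \eqref{eq:ODE_nonlocal}: $v(\cdot,t):=u(\cdot,t+\tau(x))$ is again a solution with $v(\cdot,0)=u(\cdot,\tau(x))$ taking values in the positively invariant interval $[a,b]$, so the hypotheses of Lemma~\ref{lem:invar_rgn}(ii) apply to $v$, and pointwise stability of $\Phi_l$ and $\Phi_r$ for $v$ translates back to the claim for $u$ on $[\tau(x),\infty)$. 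A second, even more minor point is simply recording that $\Phi_l,\Phi_r$ (being closed) and $\Phi_m$ (being their complement within $[a,b]$) partition $[a,b]$, so that "not in $\Phi_m$" is the same as "in $\Phi_l\cup\Phi_r$"; this makes the two characterizations of $\tau(x)$ above agree.
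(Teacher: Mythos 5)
Your proof is correct and follows essentially the same route the paper takes: the paper states Lemma~\ref{lem:tau} as an immediate consequence of the pointwise stability of $\Phi_l$ and $\Phi_r$ (via Lemma~\ref{lem:invar_rgn}(ii) applied to the invariant interval $[a,b]=[-\tfrac32,\tfrac32]$), exactly as you argue, and your care with the time shift via autonomy and with the closedness of $\Phi_l\cup\Phi_r$ at the exit time is appropriate. One small correction: the endpoint identifications needed for Lemma~\ref{lem:invar_rgn}(ii) are $f(\hat a)=f(a)=-\tfrac12$ and $f(\hat b)=f(b)=\tfrac12$ (so that $[a,\hat b]$ and $[\hat a,b]$ each have endpoint values matching $f(a)$ and $f(b)$), not $f(a)=f(\hat b)$ and $f(b)=f(\hat a)$ as you wrote; this does not affect the structure of your argument.
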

Moreover, as long as two points $u(x,t)$ and $u(y,t)$ remain in the middle phase $\Phi_m$,
the difference grows exponentially, for we have
\[
\D_t (u(x,t)-u(y,t)) = u(x,t)-u(y,t).
\]
\begin{corollary} \label{cor:expt}
If $u(x,0),u(y,0)\in\Phi_m$,
  then for $0\le t < \tau(x)\wedge\tau(y)$ we have
\[
  u(x,t)-u(y,t) = e^t (u(x,0)-u(y,0))\,.
\]
\end{corollary}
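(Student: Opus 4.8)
The plan is simply to integrate the linear scalar ODE for the difference $w(t) := u(x,t) - u(y,t)$ that is recorded immediately above the statement, after first pinning down the interval on which it is valid.

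First I would invoke Lemma~\ref{lem:tau} to guarantee that both trajectories stay in the middle phase throughout the relevant time interval: for $0 \le t < \tau(x)$ we have $u(x,t) \in \Phi_m$, and likewise $u(y,t) \in \Phi_m$ for $0 \le t < \tau(y)$, so both inclusions hold simultaneously for $0 \le t < \tau(x)\wedge\tau(y)$. On this interval, since $\Phi_m = (\hat b,\hat a) = (-\tfrac12,\tfrac12)$ and the definition~\eqref{d:fPL} gives $f(z) = -z$ there, equation~\eqref{eq:ODE_nonlocal} reads $\D_t u(x,t) = u(x,t) + \bar f(t)$ and $\D_t u(y,t) = u(y,t) + \bar f(t)$. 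Subtracting, the common (but possibly complicated) nonlocal forcing $\bar f(t)$ cancels, leaving $\D_t w(t) = w(t)$ for $0 \le t < \tau(x)\wedge\tau(y)$.

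Second, I would finish by noting that $w$ is $C^1$ in $t$ (solutions of~\eqref{eq:ODE_nonlocal} are $C^1$ in $t$ pointwise), so it is the unique solution of this scalar linear ODE with initial value $w(0) = u(x,0) - u(y,0)$, namely $w(t) = e^t w(0)$, which is exactly the claimed identity.

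There is no real obstacle: the whole point is that $f$ is affine with slope $-1$ on $\Phi_m$, so the difference of two middle-phase trajectories decouples from the nonlocal term. The only point deserving a moment's care is to restrict to the half-open interval $[0,\tau(x)\wedge\tau(y))$ rather than its closure, since at an exit time one of the trajectories may leave $\Phi_m$ and the ODE for $w$ change; the corollary is phrased with the strict inequality $t < \tau(x)\wedge\tau(y)$ precisely to avoid this, so no difficulty arises.
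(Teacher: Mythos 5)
Your proof is correct and is exactly the argument the paper intends: the displayed identity $\D_t(u(x,t)-u(y,t)) = u(x,t)-u(y,t)$ preceding the corollary comes from $f(z)=-z$ on $\Phi_m$ and the cancellation of $\bar f(t)$, and the corollary follows by integrating this linear ODE on $[0,\tau(x)\wedge\tau(y))$. Your added care about why both trajectories remain in $\Phi_m$ on that half-open interval is a reasonable (and correct) elaboration of what the paper leaves implicit.
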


\subsection{Mean force and heuristics}\label{ss:heuristic_PL}
{\em Evolution of mean force.} For the piecewise-linear nonlinearity  in \eqref{d:fPL},
it happens that $\bar f(t)$ evolves in a strikingly simple way. 
Due to the fact that 
% $f'(u)=1,-1,1$ 
\begin{equation}
f'(u) = \begin{cases}
 1 & u<-\half\,,\cr
 -1 & |u| <\half\,,\cr
 1 & u>\half \,,
\end{cases}
\qquad
 f'(u)f(u) = \begin{cases}
  u+1 & u< -\half\,,\cr
  u+0 & |u|<\frac12\,,\cr
  u-1 & u> \half\,,
 \end{cases}
\end{equation}
and $\bar f(t)$ is Lipschitz, hence differentiable a.e., 
we find using \eqref{d:Aj} that  with $\bar u = \int_\Omega u\,d\nu$,
for a.e.~$t$,
\begin{align}\label{eq:dtbarf}
\frac{d}{dt} \bar f(t) &= \int_\Omega f'(u)(-f(u)+\bar f(t))\,dx = 
-(\bar u +\nu_l-\nu_r) + (\nu_l-\nu_m+\nu_r)\bar f(t).
%- (\bar u + \mu_l -\mu_r) + (\mu_l-\mu_m+\mu_r)\bar f(t)
%\nonumber \\ & = (\eps_l - \eps_r)  - 2(\eps_l+\eps_r) \bar f(t). 
\end{align}

{\em Heuristics.} We can now explain the main idea behind our examples of non-convergence,
by describing a simple calculation that shows
how tiny perturbations from certain (always unstable) degenerate equilibria can produce
slow, but eventually large, changes in $\bar f(t)$. 
We will consider solutions with mean $\bar u=0$.
Desiring some equilibrium $\hat u$ as above to be in the $\omega$-limit set,
$\nu_m(t)$, the measure of the middle phase, should approach $\hat\mu_m=\frac12$ from above. 
Thus we will perturb by moving small bits of the (stable) left and right phases to be in the (unstable) middle phase,
close to but not exactly at the same value as $\hat u$ takes.  

Imagine then that the  initial data takes values near $-1+s$, $-s$, $1+s$
on sets of measure 
\begin{equation}\label{eq:epslr}
\nu_j = \hat\mu_j-\eps_j\,,
\end{equation}
for each symbol $j=l,m,r$ respectively, 
with  $\eps_l,\eps_r>0$ small and $\eps_m = -\eps_l-\eps_r$.
Suppose no phase changes occur over some interval of time 
during which the measures $\nu_j$ do not change.
Then during this time interval, \eqref{eq:dtbarf} becomes
\begin{equation}\label{eq:dtbarf2}
\frac{d}{dt} \bar f(t)  = \eps_l-\eps_r -2(\eps_l+\eps_r)\bar f(t) .
\end{equation}
Regardless of what the original value of $s$ was, $\bar f(t)$ is now forced to drift
toward a particular equilibrium value determined by $\eps_l$ and $\eps_r$, namely
\begin{equation}\label{eq:barfeq}
%\fix{s_*,\bar s(\eps_l,\eps_r)?} \quad 
\barfeq = \half \frac{\eps_l-\eps_r}{\eps_l+\eps_r} \ \in\left(-\half,\half\right).
\end{equation}
This value can be of order 1 no matter how small $\eps_l$, $\eps_r$ are.

Now the idea to obtain persistent oscillations is to use the exponential 
growth rate of perturbations in the (unstable) middle phase to arrange that 
small bits of that phase 
will change alternately to the (stable) left and right phases.
The time gaps between these changes should be large enough so that
$\bar f(t)$ is attracted near the prevailing value of $\barfeq$,
and the pattern of changes should cause $\eps_l$, $\eps_r$ to alternately decrease
in a way that forces the value of $\barfeq$ to
alternately drift toward distinctly different values.
We will show this can be done infinitely often, 
with the implication that $\bar f(t)$ will fail to converge
as $t\to\infty$, and the same for $u(\cdot,t)$.

\subsection{Initial data and main result}
With suitable initial data specified as follows, we can ensure that $\bar f(t)$ fails to converge.
We consider initial data taking infinitely many values, of the form 
$u(x,0)=v_0(x)-\bar v_0$ so that $\bar u=0$, with
\begin{equation}\label{d:ic_PL}
v_0(x) = \begin{cases}
 -1,\ 0,\ 1 &\text{in}\quad \Omega_l,\ \Omega_m,\ \Omega_r \text{\ \ respectively,}
\\[6pt] {(-1)^j\alpha_j} &  \text{in $\Omega_j$}\,, \ {j=0,1,2\ldots.} 
\end{cases}
\end{equation}
\blue Here, $(\alpha_j)_{j=0,1,2,\cdots}$ is a sequence of positive real numbers satisfying inequalities specified below\nc. We write $\mu_j=\nu(\Omega_j)$ for $j=l,m,r$ and $0,1,2,\ldots$, and assume 
\begin{equation}\label{e:mu_lmr}
\mu_l = \frac14-\sum_{j\,{\rm odd}} \mu_j \,,
\qquad\mu_m = \frac12\,, 
\qquad \mu_r = \frac14-\sum_{j\,{\rm even}} \mu_j \,.
\end{equation}

\begin{theorem}[Counterexample to convergence]\label{thm:PL}
Let $f$ be given by \eqref{d:fPL} and consider initial data for \eqref{eq:ODE_nonlocal} 
of the form $u(x,0)=v_0(x)-\bar v_0$ with $v_0$ given as above.
Let $0<\eta<1$,  and assume $0<\mu_0\le \frac{1-\eta}{4}$ %\fix{old: $\frac12\eta^2$} 
and 
\[
\mu_j=\mu_0\eta^j \ \ (j=0,1,2,\ldots).
\]
Assume $0<\alpha_0<\frac14$, and that 
\begin{equation}\label{c:alpha1}
0<\alpha_{j+1}\le \alpha_j\mu_j \quad\text{ for $j=0,1,2,\ldots$}.
\end{equation}
Then:
\begin{itemize}
\item[(i)] The phase transition times 
$\tau_j=\tau(\Omega_j)$
%satisfy $\tau(\Omega_m)=+\infty$ and $\tau(\Omega_{j+1})>\tau(\Omega_j)$ for all $j\ge0$.
satisfy $\tau_m=+\infty$ and $\tau_{j+1}>\tau_j$ for all $j\ge0$.
\item[(ii)]  If moreover for some positive sequence $\beta_j$ decreasing to $0$,
\begin{equation}\label{c:alpha2}
\alpha_{j+1}\le  \alpha_j \mu_j \beta_j^{1/\mu_j}
%\eta^2 \exp\left(\frac{-1}{\mu_{j+1}}\right)  
%\quad\text{for $j=0,1,2,\ldots$},
\end{equation}
for all $j$ sufficiently large,
then 
\[
\limsup \bar f(t)-\liminf \bar f(t) = \frac{1-\eta}{1+\eta}\,, 
\]
and as $t\to\infty$  the solution $u$ fails to converge 
in $L^p$ for any $p\in[0,\infty]$.
Its $\omega$-limit set consists
of all the equilibria $\hat u_s$ for $|s|\le \frac12\frac{1-\eta}{1+\eta}$.
\end{itemize}
\end{theorem}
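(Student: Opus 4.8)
The plan is to track the motion of each "perturbation block" $\Omega_j$ through the middle phase, controlling its exit time $\tau_j$ and the measures $\nu_l(t),\nu_m(t),\nu_r(t)$ along the way, and then feed this information into the scalar ODE \eqref{eq:dtbarf} for $\bar f(t)$ to make it oscillate. First I would record the basic order-and-stability structure: by Lemma~\ref{lem:monotone} and Corollary~\ref{cor:expt}, as long as two points remain in $\Phi_m$ their difference grows like $e^t$; since $v_0$ takes the values $(-1)^j\alpha_j$ on $\Omega_j$ with $\alpha_j$ decreasing, the block $\Omega_{j+1}$ sits strictly between $\Omega_m$'s value $0$ and $\Omega_j$'s value, so once the exponential growth carries $\Omega_j$ out of $\Phi_m$, block $\Omega_{j+1}$ is still inside; this is the source of the ordering $\tau_{j+1}>\tau_j$ in part~(i), while $\tau_m=+\infty$ follows because the $\Omega_m$ points start exactly at the (shifted) equilibrium value $-s$ and nothing pushes them out as long as $\bar f$ stays in $(-\tfrac12,\tfrac12)$. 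The odd/even alternation in \eqref{d:ic_PL} is what makes $\Omega_j$ exit into $\Phi_l$ when $j$ is odd and into $\Phi_r$ when $j$ is even (or vice versa), so that after the $j$-th transition the quantities $\eps_l=\tfrac14-\nu_l$ and $\eps_r=\tfrac14-\nu_r$ have lost the contribution $\mu_j$ from alternating sides.

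Next I would make the heuristic of Subsection~\ref{ss:heuristic_PL} rigorous on each inter-transition interval $(\tau_j,\tau_{j+1})$. On such an interval the measures $\nu_l,\nu_m,\nu_r$ are constant, so \eqref{eq:dtbarf} reduces exactly to the linear ODE \eqref{eq:dtbarf2} with $\eps_l,\eps_r$ determined by the tail sums in \eqref{e:mu_lmr}, and $\bar f(t)$ relaxes exponentially (at rate $2(\eps_l+\eps_r)$) toward the instantaneous target $\barfeq$ from \eqref{eq:barfeq}. Because the partial tail sums of $\mu_j=\mu_0\eta^j$ are comparable to $\mu_0\eta^j$ up to the factor $1/(1-\eta)$ or $1/(1-\eta^2)$, the successive targets $\barfeq$ alternate between values close to $+\tfrac12\frac{1-\eta}{1+\eta}$ and $-\tfrac12\frac{1-\eta}{1+\eta}$ — precisely because the even-indexed and odd-indexed removals accumulate on opposite sides. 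The key quantitative point is that the dwell time $\tau_{j+1}-\tau_j$ must be long enough (in units of the relaxation time $1/(\eps_l+\eps_r)\sim 1/\mu_j$) for $\bar f$ to get within $\beta_j$ of the current target; this is exactly what hypothesis \eqref{c:alpha2} buys. Indeed, the exit time $\tau_{j+1}$ is essentially governed by how long the exponential $e^t$ takes to inflate the gap from order $\alpha_{j+1}$ (the size of block $\Omega_{j+1}$'s deviation from $0$) up to order $1$, i.e.\ $\tau_{j+1}-\tau_j \approx \log(\alpha_j/\alpha_{j+1})$ roughly; so \eqref{c:alpha1} gives $\tau_{j+1}-\tau_j \gtrsim \log(1/\mu_j)\sim |\log\eta|\, j$, enough to force $\tau_{j+1}>\tau_j$ and $\tau_j\to\infty$, while the stronger \eqref{c:alpha2} gives $\tau_{j+1}-\tau_j \gtrsim \tfrac{1}{\mu_j}\log(1/\beta_j)$, enough for the relaxation in \eqref{eq:dtbarf2} to bring $|\bar f(\tau_{j+1}^-)-\barfeq_j| \lesssim \beta_j$.

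From here part~(ii) follows: since $\beta_j\to0$ and the targets alternate with amplitude $\tfrac12\frac{1-\eta}{1+\eta}$, we get $\limsup\bar f(t)-\liminf\bar f(t)=\frac{1-\eta}{1+\eta}$. Non-convergence of $u(\cdot,t)$ in $L^p$ is then immediate — if $u(\cdot,t)\to\hat u$ in $L^2$ then $\bar f(t)\to\int f(\hat u)$ would converge, contradiction; the statement for general $p\in[0,\infty]$ follows because the solution stays in the bounded invariant set $[-\tfrac32,\tfrac32]$ so $L^2$-nonconvergence propagates to all $p$. Finally, to identify the $\omega$-limit set as $\{\hat u_s : |s|\le\tfrac12\frac{1-\eta}{1+\eta}\}$: every limit point is an equilibrium by \eqref{limdt2}, and one checks the limiting phase measures are $\nu_l(\infty)=\nu_r(\infty)=\tfrac14$, $\nu_m(\infty)=\tfrac12$ (the tail sums vanish), so the limiting equilibria are exactly the $\hat u_s$; connectedness of the $\omega$-limit set together with $\bar f$ achieving every value in $[-\tfrac12\frac{1-\eta}{1+\eta},\tfrac12\frac{1-\eta}{1+\eta}]$ in the limit pins down the full interval of $s$.

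I expect the main obstacle to be the careful bookkeeping of error terms in the second step: one must show not only that $\bar f$ relaxes toward $\barfeq$ on each interval, but that the \emph{residual} error carried into the next interval (from incomplete relaxation, plus the small but nonzero $\alpha_j$-perturbations meaning the blocks are not exactly at $z_j(\bar f)$, plus the drift of $\bar f$ during the brief transition windows when measures are actually changing) does not accumulate and spoil the alternation. Handling this requires a uniform-in-$j$ estimate showing the per-stage error is summable — which is where the geometric decay $\mu_j=\mu_0\eta^j$ and the freedom to shrink $\alpha_{j+1}$ via \eqref{c:alpha1}–\eqref{c:alpha2} are used in tandem, and where a gradient-type inequality such as Lemma~\ref{lem:gradE-PL} (alluded to in the introduction) is presumably invoked to control how far the actual orbit can wander from the idealized piecewise picture.
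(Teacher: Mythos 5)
Your overall strategy --- track each block's exit time, reduce $\bar f$ on inter-transition intervals to the exact linear relaxation \eqref{eq:dtbarf2} toward the alternating targets $\barfeq=\pm\tfrac12\tfrac{1-\eta}{1+\eta}$, and show the dwell times are long relative to the relaxation rate $\sim\mu_k$ --- matches the paper's. But both reasons you give for part (i) are incorrect, and they conceal the actual engine of the proof, which is mass conservation combined with preservation of order. First, $\tau_m=+\infty$ does \emph{not} follow from ``$\bar f$ stays in $(-\tfrac12,\tfrac12)$'': the middle phase is unstable, $\partial_t u_m=u_m+\bar f(t)$, so if $u_m$ nears $\tfrac12$ while $\bar f>-\tfrac12$ it simply exits. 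The paper's argument is that order preservation puts all the mass of $\Omega_m\cup\Omega_r\cup\bigcup_{j\,\mathrm{even}}\Omega_j$ (total $\tfrac34$) at or above $u_m$, so $u_m(\tau_m)=\tfrac12$ would force $\bar u>-\tfrac32\cdot\tfrac14+\tfrac12\cdot\tfrac34=0$, contradicting $\bar u=0$. Second, $\Omega_{j+1}$ does \emph{not} sit between $0$ and $\Omega_j$'s value: consecutive blocks carry values of opposite sign and exit on opposite sides of $\Phi_m$, so order preservation only yields $\tau_{j+2}>\tau_j$. The comparison $\tau_{j+1}>\tau_j$ is genuinely quantitative: one needs a lower bound on the distance from $u_m(t)$ to whichever boundary $\pm\tfrac12$ it approaches (the paper's Lemma~\ref{lem:umbounds}, again via mass conservation: $\tfrac12-u_m>\mu_{j_l(t)}$ and $u_m+\tfrac12>\mu_{j_r(t)}$), combined with the exact relation ``$\alpha_j e^{\tau_j}=$ gap at exit,'' so that $\tau_{j+1}\le\tau_j$ would give $\mu_j<\alpha_{j+1}e^{\tau_{j+1}}\le\alpha_{j+1}e^{\tau_j}<\alpha_{j+1}/\alpha_j$, contradicting \eqref{c:alpha1}.

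The same two-sided bounds $\mu_{k+1}<\alpha_k e^{\tau_k}\le 1$ are what convert \eqref{c:alpha2} into $\mu_k(\tau_k-\tau_{k-1})\to\infty$ and hence $|\bar f(\tau_k)-\barfeq_k|\to0$ in part (ii); without them your estimate $\tau_{j+1}-\tau_j\approx\log(\alpha_j/\alpha_{j+1})$ remains heuristic. Two smaller corrections: since each block $\Omega_j$ transitions all at once, the phase measures are exactly piecewise constant in $t$, so there are no ``transition windows'' and no accumulating residual error to control (the relaxation estimate on each interval holds regardless of the entering value of $\bar f$, since $|\bar f-\barfeq|\le1$ always); and Lemma~\ref{lem:gradE-PL} is not invoked anywhere in the argument --- the paper includes it precisely to illustrate that such a gradient inequality is \emph{insufficient} to force convergence in this setting.
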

Observe that 
\[
-\bar v_0 = \mu_l-\mu_r -  \sum_{j\ge0}(-1)^j\alpha_j\mu_j\,,
\qquad
\mu_l-\mu_r = \frac{\mu_0}{1+\eta}\,, 
\qquad
0< \sum_{j\ge0}(-1)^j\alpha_j\mu_j < \alpha_0\mu_0.
\]
Thus the hypotheses imply $0<-\bar v_0<\mu_0\le\frac14$ and it follows that 
$u(x,0)\in\Phi_j$ for all $x\in\Omega_j$ $j=l,m,r$. 
Moreover $\mu_l,\mu_r>0$ and $u(x,0)\in\Phi_m$ for all $x\in\Omega_j$ with $j\ge0$, since  $|(-1)^j\alpha_j-\bar v_0|<\frac12$.

\begin{remark}\label{rmk:monotone_init;pwl}
    In case $\Omega=[0,1]$ and $\nu$ is the Lebesgue measure on $[0,1]$, we can ensure the initial data are monotonically increasing by an explicit choice of the $\Omega_j$, setting 
    \[\Omega_l=\left[0,\frac14-\frac{\mu_0\eta}{1-\eta^2}\right),
    \qquad \Omega_m=\left[\frac14,\frac34\right],
    \qquad\Omega_r=\left(\frac34+\frac{\mu_0}{1-\eta^2},1\right],\]
    and 
    \[
    \Omega_j=\begin{cases}
    \displaystyle
    \left(\frac34+\frac{\mu_0\eta^{j+2}}{1-\eta^2},\frac34+\frac{\mu_0\eta^{j}}{1-\eta^2}\right] \text{ for even }j\geq 0,\\[12pt]
    \displaystyle
    \left[\frac14-\frac{\mu_0\eta^{j}}{1-\eta^2},\frac14-\frac{\mu_0\eta^{j+2}}{1-\eta^2}\right) \text{ for odd } j\geq 1.
    \end{cases}
    \qedhere
    \]
\end{remark}

\subsection{Ordering of phase transition times}

In this subsection our goal is to prove part (i) of the theorem.
The ideas for this part of the proof will also apply to the case of cubic nonlinearity 
with few changes, see Section~\ref{s:cubic} below.

To begin we set some notation. 
Let $u_j(t)$ denote the value of $u(x,t)$ for $x\in\Omega_j$, 
$j=l,m,r$ and $0,1,2,\ldots$. Noting that $u_j(0)$ lies in the middle phase
$\Phi_m$ for  $j=m$ and $0,1,2,\ldots,$  
we let $\tau_j=\tau(\Omega_j)$ denote the phase transition time for all $x\in\Omega_j$. 
For convenience we also write $\tau_{-1}=0$ and $\alpha_{-1}=1$.

First, we claim $\tau_m=\tau(\Omega_m)=+\infty$.
The proof is simple based on  preservation of order, 
the invariance of the interval $[a,b]=[-\frac32,\frac32]$, and mass conservation. 
Preservation of order (Lemma~\ref{lem:monotone}) and the invariance of $[a,b]$
implies that for all $t\ge0$,
\begin{equation}\label{eq:order2}
a\le u_j(t)<u_m(t)<u_k(t)\le b 
\quad\text{for $j$ odd or $=l$, and $k$ even or $=r$.}  
\end{equation}
Supposing $\tau_m<\infty$,  we have either $u_m(\tau_m)=\frac12$ or $-\frac12$. 
Consider the first case.  Then by mass conservation and \eqref{eq:order2},
at time $t=\tau_m$,
\begin{equation} \label{bd:ubar1}
0 = \bar u > a\left(\mu_l+ \sum_{j \rm odd}\mu_j \right) 
+ \blue u_m(\tau_m)\nc \left(\mu_m+\mu_r+\sum_{j \rm even} \mu_j \right) = 
-\frac32\cdot\frac14 + \frac12\cdot\frac34 = 0,
\end{equation}
a contradiction. A similar contradiction obtains if $u_m(\tau_m)=-\frac12.$
This proves the claim.

Because now $\min_{0\le t\le T} |u_m(t)\pm\frac12|>0$ for all $T$, and $\alpha_j\to0$ as $j\to\infty$, 
by simple continuity with respect to initial data we can infer that $\tau_j\to\infty$ as $j\to\infty$.

Next we claim the phase transition times $\tau_j$ strictly increase
with $j$ for $j\ge0$. The argument is based on a refinement of the estimates above.
By preservation of order we have 
\[
u_j(t)<u_{j+2}(t)<u_m(t)<u_{k+2}(t)<u_k(t) \quad\text{for all $j$ odd and $k$ even}.
\]
Hence $\tau_{j+2}>\tau_j$ for all $j\ge0$.  
\begin{lemma}\label{lem:umbounds}
   For any $t>0$,  let 
   \[
  j_l(t) = \min\{ j\ {\rm odd}: \tau_j\ge t\} ,\qquad
  j_r(t) = \min\{ j\ {\rm even}: \tau_j\ge t\} .
   \]

   Then we have the bounds
   \[
   u_m(t) < \frac12 - {\mu_{j_l(t)}} \,,
   \qquad u_m(t) > -\frac12 +  {\mu_{j_r(t)}}\,.
   \]
\end{lemma}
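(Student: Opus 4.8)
\begin{proofsketch}
The plan is to read off both inequalities from a single ingredient --- mass conservation $\sum_j \mu_j u_j(t) = 0$ at the given time $t$ --- once we have pinned down which phase each set $\Omega_j$ occupies. Preservation of order (Lemma~\ref{lem:monotone}) together with \eqref{eq:order2} gives $u_j(t) < u_m(t)$ for every odd index $j$ and $u_k(t) > u_m(t)$ for every even index $k$. Since $\tau_m = +\infty$, the value $u_m(t)$ stays in the open middle phase $\Phi_m = (\hat b,\hat a)$, so an odd index that has already left $\Phi_m$ must lie in $\Phi_l = [a,\hat b]$ (it is strictly below $u_m(t) < \hat a$), and, symmetrically, an even index that has left $\Phi_m$ lies in $\Phi_r = [\hat a,b]$; by pointwise stability of $\Phi_l$ and $\Phi_r$ it stays there. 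Introduce
\[
a_1 = \mu_l + \sum_{j\ \mathrm{odd},\ \tau_j < t} \mu_j,
\qquad
b_1 = \sum_{j\ \mathrm{odd},\ \tau_j \ge t} \mu_j,
\]
and define $a_2,b_2$ in the same way with ``even'' in place of ``odd''. By \eqref{e:mu_lmr} we have $a_1 + b_1 = \mu_l + \sum_{j\ \mathrm{odd}} \mu_j = \tfrac14$ and $a_2 + b_2 = \tfrac14$; moreover $b_1 \ge \mu_{j_l(t)} > 0$ and $b_2 \ge \mu_{j_r(t)} > 0$, since $\Omega_{j_l(t)}$ and $\Omega_{j_r(t)}$ are among the sets counted in $b_1$ and $b_2$ respectively.

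For the first inequality I would bound below the contribution of each group to $\sum_j \mu_j u_j(t) = 0$: the left phase (total measure $a_1$, values $\ge a = -\tfrac32$) contributes $\ge -\tfrac32 a_1$; the odd indices still in $\Phi_m$ (measure $b_1$, values $> -\tfrac12$) contribute $> -\tfrac12 b_1$; the set $\Omega_m$ contributes exactly $\tfrac12 u_m(t)$; the even indices still in $\Phi_m$ (values $> u_m(t)$) contribute $> u_m(t)\,b_2$; and the right phase (values $\ge \tfrac12$) contributes $\ge \tfrac12 a_2$. Adding these and using $a_1 = \tfrac14 - b_1$, $a_2 = \tfrac14 - b_2$ to collapse the right-hand side gives
\[
u_m(t)\left(\tfrac12 + b_2\right) \,<\, a_1 + \tfrac12 b_2 .
\]
Since
\[
\left(\tfrac12 - \mu_{j_l(t)}\right)\!\left(\tfrac12 + b_2\right) - \left(a_1 + \tfrac12 b_2\right)
= \left(\tfrac14 - \mu_{j_l(t)} - a_1\right) + \mu_{j_l(t)}\!\left(\tfrac12 - b_2\right) \ge 0,
\]
using $a_1 = \tfrac14 - b_1 \le \tfrac14 - \mu_{j_l(t)}$ and $b_2 < \tfrac14 < \tfrac12$, dividing the preceding strict inequality by $\tfrac12 + b_2 > 0$ yields $u_m(t) < \tfrac12 - \mu_{j_l(t)}$. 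The second inequality is the mirror image: bound each group's contribution \emph{above} instead (left phase $\le -\tfrac12 a_1$, odd middle $< u_m(t)\,b_1$, right phase $\le \tfrac32 a_2$, even middle $< \tfrac12 b_2$), which after the same simplification gives $u_m(t)\left(\tfrac12 + b_1\right) > -a_2 - \tfrac12 b_1$, and then $u_m(t) > -\tfrac12 + \mu_{j_r(t)}$ follows from $a_2 \le \tfrac14 - \mu_{j_r(t)}$ and $b_1 < \tfrac12$ in exactly the same manner.

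The only genuinely delicate point is the bookkeeping --- turning the alternation of signs in the initial data \eqref{d:ic_PL} into the statement ``odd indices fall into $\Phi_l$, even indices into $\Phi_r$,'' and keeping the inequalities strict (which works because $\Phi_m$ is open and $b_1,b_2 > 0$). Everything else is the same elementary interplay of order preservation, pointwise stability, and mass conservation that already produced $\tau_m = +\infty$; no new idea is required.
\end{proofsketch}
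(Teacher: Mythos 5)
Your proof is correct and follows essentially the same route as the paper: both combine preservation of order, pointwise stability of the outer phases, and mass conservation $\bar u=0$ to bound the contributions of the left-phase, middle-phase, and right-phase groups, arriving at the same intermediate inequality (your $u_m(\tfrac12+b_2)<a_1+\tfrac12 b_2$ is the paper's $u_m<(\tfrac12+\eps_r)^{-1}(\tfrac14+\tfrac12\eps_r-\eps_l)$ with $b_1=\eps_l$, $b_2=\eps_r$). The only cosmetic difference is the last step: the paper substitutes the geometric-series identity $\eps_l=\mu_{j_l}/(1-\eta^2)$ and uses $1-\eta^2+2\mu_j<2$, whereas you use only $b_1\ge\mu_{j_l}$, $b_2\ge\mu_{j_r}$ and $b_1,b_2\le\tfrac14$, which is marginally more elementary.
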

\begin{proof}
The sets of points initially in the unstable phase 
that transition into the left and right stable phases 
at time $t$ or later  have measure respectively given by 
\begin{equation} \label{d:ep_lr}
    \eps_l(t) = 
    \nu(\{x: \hat b \le u(x,t)<u_m(t)\}) 
\,,
\qquad
    \eps_r(t) = 
    \nu(\{x: u_m(t)<u(x,t)\le \hat a \}) 
\,,
%    \label{d:ep_r}
\end{equation}
which here satisfy
\[
\eps_l(t)  = \hat\mu_l-\nu_l(t)= 
  \sum_{{\rm odd\,}j\ge j_l} \mu_j = \frac{\mu_{j_l}}{1-\eta^2} \,,
%  \sum_{\substack{j\ge j_l\\j {\rm odd}}} \mu_j = \frac{\mu_{j_l}}{1-\eta^2} \,,
\quad
\eps_r(t) =  \hat\mu_r-\nu_r(t)= 
\sum_{{\rm even\,}k\ge j_r} \mu_k = \frac{\mu_{j_r}}{1-\eta^2} \,.
\]
Using these quantities we can obtain a bound on $u_m(t)$ with inequalities similar to \eqref{bd:ubar1}.
Namely, preservation of order and invariance imply
\begin{align}
 0 &= \bar u > a(\hat\mu_l-\eps_l) -\hat a\eps_l + u_m(t)(\hat\mu_m+\eps_r) + \hat a(\hat\mu_r-\eps_r) ,
    \\
 0 &= \bar u < \hat b(\hat\mu_l-\eps_l) + u_m(t)(\eps_l+ \hat\mu_m) + \hat a\eps_r + b (\hat\mu_r-\eps_r).
\end{align}
Recalling $a=-b=-\frac32$ and $\hat b=-\hat a=-\frac12$
it follows
\begin{align}
   u_m(t) &< 
   \left(\frac12+\eps_r\right)\inv\left(\frac14+\frac12\eps_r-\eps_l\right)
   = \frac12 - \frac{2\eps_l}{1+2\eps_r} 
   = \frac12 - \frac{2\mu_{j_l}}{1-\eta^2+2\mu_{j_r}} ,
   \\
   u_m(t) &> 
   \left(\frac12+\eps_l\right)\inv\left(-\frac14-\frac12\eps_l+\eps_r\right)
   = -\frac12 + \frac{2\eps_r}{1+2\eps_l} 
   = -\frac12 + \frac{2\mu_{j_r}}{1-\eta^2+2\mu_{j_l}} .
\end{align}
Since $2>1-\eta^2+2\mu_j$ for all $j$, 
this finishes the proof of the lemma.
\end{proof}

Now we finish the proof of part (i) of the theorem, considering even and odd cases separately.
Let $k\ge0$ be even. Then by Corollary~\ref{cor:expt}, at $t=\tau_k$ we have 
\[
\alpha_k e^{\tau_k} = u_k(t)-u_m(t) = \frac12-u_m(t) < 1.
\]
We claim $\tau_{k+1}>\tau_k$. If not, then for $t=\tau_{k+1}$ we have $u_{k+1}(t)=-\frac12$,
$j_l(t)=k+1$ and $j_r(t)\le k$,
hence by \blue Lemma \ref{lem:umbounds} \nc and Corollary~\ref{cor:expt} we have
\begin{align}
    \mu_k\le \mu_{j_r(t)} < u_m(\tau_{k+1})+\frac12 %= u_m(t)-u_{k+1}(t) 
    = \alpha_{k+1} e^{\tau_{k+1}}
    \le \alpha_{k+1} e^{\tau_{k}} < \frac{\alpha_{k+1}}{\alpha_k}.
\end{align}
This contradicts \eqref{c:alpha1}, proving $\tau_{k+1}>\tau_k$.
Similarly, for $j$ odd, at $t=\tau_j$ we have 
$ \alpha_j e^{\tau_j} = u_m(t) + \frac12 <1,$
and if $\tau_{j+1}\le \tau_j$ then for $t=\tau_{j+1}$ we have $u_{j+1}(t)=\frac12$, 
$j_r(t)=j+1$ and $j_l(t)\le j$, hence
\[
    \mu_j < \frac12-u_m(\tau_{j+1})   = \alpha_{j+1} e^{\tau_{j+1}} < \frac{\alpha_{j+1}}{\alpha_j}.
\]
Thus we conclude $\tau_{j+1}>\tau_j$ for all $j=0,1,2,\ldots$. 
This finishes the proof of part (i) of the Theorem.

\subsection{Proof of non-convergence}

For times $t$ in any interval $(\tau_{k-1},\tau_{k})$ between transition times ($k\ge0$),
$\bar f(t)$ evolves according to \eqref{eq:dtbarf},
which can be written using \eqref{d:ep_lr} as 
%and \eqref{eq:barfeq} as
\begin{equation}\label{eq:dtbarf3}
\frac{d}{dt} \bar f(t)  = 
-2\eps_k(\bar f(t)-\barfeq_k),
\qquad
\eps_k=\eps_l+\eps_r =  \frac{\mu_{k}}{1-\eta},
\quad \barfeq_k = \frac{(-1)^{k-1}}{2} \frac{1-\eta}{1+\eta} ,
\end{equation}
because for $k$ even we have $j_l(t)=k+1$, $j_r(t)=k$, and for $k$ odd we have $j_l(t)=k$, $j_r(t)=k+1$.
Then because $\eps_k>\mu_k$,
\begin{equation}
|\bar f(\tau_k)-\barfeq_k| =
|\bar f(\tau_{k-1})-\barfeq_k| e^{-2\eps_k(\tau_k-\tau_{k-1})}
< e^{-\mu_k(\tau_k-\tau_{k-1})}\,.
\end{equation}
Since 
$\alpha_ke^{\tau_k} = \frac12 - u_m(\tau_k)$  for $k$ even
and  $\alpha_ke^{\tau_k} = u_m(\tau_k)+\frac12 $ for $k$ odd,
by Lemma~\ref{lem:umbounds} we infer  
$\alpha_k e^{\tau_k} > \mu_{k+1}$ and 
$ \alpha_{k-1}e^{\tau_{k-1}} \le 1$ in both cases. Hence for $k$ sufficiently large,
\[
e^{\tau_k-\tau_{k-1}} >  \frac{\alpha_{k-1}\mu_{k+1}}{\alpha_k}
> \eta^2 \beta_{k-1}^{-1/\mu_{k-1}}\,,
\]
due to the hypothesis \eqref{c:alpha2}, and it follows
\[
\mu_k(\tau_k-\tau_{k-1}) > \mu_k\log\eta^2 - \eta\log\beta_{k-1} \to \infty
\quad\text{as $k\to\infty$.  }
\]
Thus $|\bar f(\tau_k)-\barfeq_k|\to0$ as $k\to\infty$,
and this entails the result in part (ii) of the Theorem.

%\fix{Should we add a remark about smooth $f$? Nah}
\subsection{A gradient inequality, insufficient for convergence}\label{ss:gradPL}
It is curious to note that for the piecewise-linear nonlinearity in \eqref{d:fPL}, a gradient inequality 
holds that is very similar to the one from Lemma~\ref{lem:gradE} that holds 
generally in the case of finite range. 
\begin{lemma}\label{lem:gradE-PL}
    Suppose $u\in B(\Omega)$ takes values $u(x)\in\Phi_j$ for $x\in\Omega_j$ for $j=l,m,r$, where 
    $\Omega_l\cup\Omega_m\cup\Omega_r=\Omega$.  Let $s=\overline{f(u)}=\int_\Omega f(u)\,d\nu$ and 
    let $\phi(x)=-1+s, -s, 1+s$ in $\Omega_l$, $\Omega_m$, $\Omega_r$ respectively. 
    Then
    \[
    \left|E(u)-E(\phi)-s\int_\Omega (u-\phi)\,d\nu \right| \le \frac12 \int_\Omega |f(u)-\overline{f(u)}|^2\,d\nu.
    \]
\end{lemma}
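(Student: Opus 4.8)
The plan is to mimic the computation in the proof of Lemma~\ref{lem:gradE}, doing the Taylor-type expansion phase by phase since $f$ is piecewise linear. On each phase the function $F$ is a quadratic, so the Taylor expansion is exact: writing $v(x)=u(x)-\phi(x)$, on $\Omega_j$ we have $F(u(x))=F(\phi(x))+f(\phi(x))v(x)+\tfrac12 f'(\phi(x))v(x)^2$ exactly (no remainder term), where $f'(\phi(x))$ equals $+1$ on $\Omega_l$ and $\Omega_r$ and $-1$ on $\Omega_m$; similarly $f(u(x))=f(\phi(x))+f'(\phi(x))v(x)$ exactly. Here one uses that $u(x)$ and $\phi(x)$ lie in the same phase interval $\Phi_j$ (both in $\Phi_j$ for $x\in\Omega_j$, $\phi$ being a genuine equilibrium with $f(\phi(x))\equiv s$), so no phase boundary is crossed between them and the single linear branch of $f$ applies throughout.

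Next I would assemble these identities. Since $f(\phi(x))=s$ for all $x$, integrating the first identity gives
\[
E(u)-E(\phi)=\int_\Omega\Bigl(f(\phi)v+\tfrac12 f'(\phi)v^2\Bigr)d\nu
= s\int_\Omega v\,d\nu+\tfrac12\int_\Omega f'(\phi)v^2\,d\nu,
\]
so that $E(u)-E(\phi)-s\int_\Omega(u-\phi)\,d\nu=\tfrac12\int_\Omega f'(\phi)v^2\,d\nu$. On the other hand, the second identity gives $f(u(x))-\overline{f(u)}=f(u(x))-s=f'(\phi(x))v(x)$, hence $|f(u)-\overline{f(u)}|^2=v^2$ pointwise (since $f'(\phi)=\pm1$). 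Therefore
\[
\left|E(u)-E(\phi)-s\int_\Omega(u-\phi)\,d\nu\right|
=\tfrac12\left|\int_\Omega f'(\phi)v^2\,d\nu\right|
\le \tfrac12\int_\Omega v^2\,d\nu
=\tfrac12\int_\Omega|f(u)-\overline{f(u)}|^2\,d\nu,
\]
which is exactly the claimed inequality.

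The only genuinely delicate point — the analogue of the ``main obstacle'' — is verifying that $s=\overline{f(u)}$ indeed lies in $(-\tfrac12,\tfrac12)$ so that the three-branch formula $\phi=-1+s,-s,1+s$ makes sense and $\phi(x)\in\Phi_j$ for $x\in\Omega_j$, and that $u(x)$ and $\phi(x)$ really sit on the same linear branch; but this follows immediately from the hypothesis $u(x)\in\Phi_j$ together with the definition of the phase intervals $\Phi_l=[a,\hat b]$, $\Phi_m=(\hat b,\hat a)$, $\Phi_r=[\hat a,b]$ and the fact that $f$ maps each $\Phi_j$ into $[-\tfrac12,\tfrac12]$, with $f$ linear (constant derivative) on each. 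Indeed $\overline{f(u)}$ is an average of values of $f(u(x))\in f(\Phi_j)\subset[-\tfrac12,\tfrac12]$, so $s\in[-\tfrac12,\tfrac12]$ and the inverse branches $z_l,z_m,z_r$ are defined; and on each $\Omega_j$ both $u(x)$ and $\phi(x)=z_j(s)$ lie in $\overline{\Phi_j}$, on which $f$ is affine, so the exact (remainder-free) Taylor identities hold. Once this is noted, the rest is the short exact computation above, with the constant $\tfrac12$ being sharp because the ``bad'' sign only occurs on the middle phase where $f'=-1$.
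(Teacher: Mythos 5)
Your proof is correct and follows essentially the same route as the paper's: an exact Taylor expansion of the piecewise-quadratic $F$ on each phase (using that $u(x)$ and $\phi(x)$ lie on the same affine branch of $f$, that $f(\phi)\equiv s=\overline{f(u)}$, and that $f'=\pm1$), followed by integration and the identity $|f(u)-\overline{f(u)}|=|u-\phi|$. The extra care you take with the edge cases $s=\pm\tfrac12$ and the closures $\overline{\Phi_j}$ is sound and slightly more explicit than the paper's one-line justification.
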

\begin{proof}
Since $u(x)$ and $\phi(x)$ belong to the same phase for all $x$, Taylor expansion
of the piecewise-quadratic primitive $F$ of $f$, and the facts that $f'=\pm1$ 
and $f(\phi(x))=s=\overline{f(u)}$ everywhere, yield 
\[
F(u)-F(\phi) -s(u-\phi) = (f(\phi)-s)(u-\phi) + \tfrac12 f'(\phi)(u-\phi)^2
= \pm \tfrac12(u-\phi)^2
\]
and 
\[
|f(u)-\overline{f(u)}|^2 = |f(u)-f(\phi)|^2 = |u-\phi|^2.
\]
Upon integration, the Lemma follows.
\end{proof}

The difference with the finite range case is that the equilibrium states $\phi$ in this Lemma
are chosen with values $\phi(x)$ in the same phase as $u(x)$ at each point, but here this means 
$\phi$ may not have the same average as $u$, and may never be in the $L^2$ $\omega$-limit set of the solution.

         \section{Non-convergence for a cubic nonlinearity}\label{s:cubic}

In order to demonstrate that the possibility of non-convergence of solutions of \eqref{eq:ODE_nonlocal}
is not due to any lack of analyticity of the nonlinear function $f$, we extend our analysis
from the previous section to deal with the case that $f$ is cubic and nonmonotone, fixing 
\begin{equation}\label{d:cubic_f}
    f(u) = u^3 -u .
\end{equation}
It will be evident that our analysis can extend to other nonlinearities with $N$-shaped graph
that admit a linear relation between distinct roots of $f(z)=s$, but we fix $f$ in the form
\eqref{d:cubic_f} for simplicity.

Moreover, to show that non-convergence is not restricted to solutions
having countable range or limited regularity, we allow initial data 
of a more general type.  When $\Omega$ is the interval $[0,1]$ or a bounded domain in $\R^d$, 
for example, our assumptions will permit initial data and solutions
to be $C^\infty$ smooth. 

  \subsection{Phases, equilibria, and transition times}
  Our solutions will take values in the phase intervals given by 
  \begin{equation}
      \Phi_l = [a,\hat b],\quad \Phi_m = (\hat b,\hat a),\quad \Phi_r=[\hat a,b],
  \end{equation}
  with 
  \[
  a = -\frac2{\sqrt 3} \,, \quad
  \hat b = -\frac1{\sqrt 3} \,, \quad
  \hat a = \frac1{\sqrt 3} \,, \quad
  b = \frac2{\sqrt 3} \,. 
  \]
See Fig.\ref{fig:cubic}.

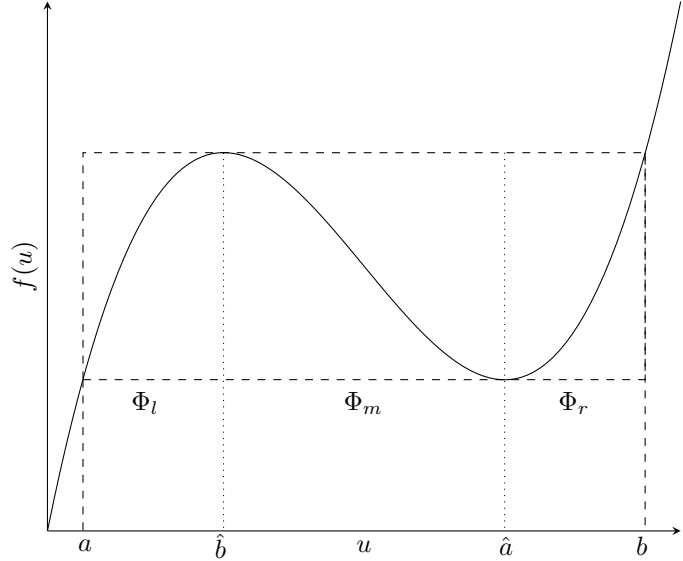
\begin{figure}
    \centering
\begin{tikzpicture}
\begin{axis}[ticks=none,
    axis lines = left,
    xlabel = \(u\),
    ylabel = {\(f(u)\)},
]
%Below the red parabola is defined
\addplot [
    domain=-1.3:1.3, 
    samples=100, 
    color=black,
] 
{x^3 - x};
%\addlegendentry{\(u^3-u\)}

\draw[dashed] (-1.154,-0.3845) rectangle (1.154,0.3845);
\draw[dashed] (-1.154,-0.3845) -- (-1.154,-1.5);
\draw[dashed] (1.154,0.3845) -- (1.154,-1.5);
\draw[dotted] (-0.577,0.3845) -- (-0.577,-1.5);
\draw[dotted] (0.577,0.3845) -- (0.577,-1.5);

\end{axis}
%\draw[help lines, step=1, green, thin] (0,0) grid (8,6);
%\node at (0,0)[circle, fill, inner sep=2 pt]{};
	  \node at (0.5,-0.2){$a$};
        \node at (2.3,-0.2){$\hat b$};
        \node at (6.1,-0.2){$\hat a$};
        \node at (7.9,-0.2){$b$};
        \node at (1.3,1.7){$\Phi_l$};
        \node at (4.2,1.7){$\Phi_m$};
        \node at (7,1.7){$\Phi_r$};
\end{tikzpicture}
    \caption{Cubic $f$ and phases.}
    \label{fig:cubic}
\end{figure}
According to Lemma~\ref{lem:invar_rgn}, the interval $[a,b]$ is invariant 
and the phase intervals $\Phi_l$, $\Phi_r$ are pointwise stable. 
Because of this, Lemma~\ref{lem:tau} holds  in this context {\em mutatis mutandi},
and transition times $\tau(x)\in(0,\infty]$ are well defined for states $u(x,t)$ 
initially in $\Phi_m$ to exit into either $\Phi_l$ or $\Phi_r$.

For each $s$ in the interval $\hat J:=(f(a),f(b))$, the equation $f(z)=s$
has a solution $z_j(s)\in\Phi_j$, $j=l,m,r$.
These three roots of the cubic equation $f(z)=s$ for $s\in \hat J$ satisfy
the trace relation $z_l(s)+z_m(s)+z_r(s)=0$. 
Our solutions will have asymptotic limits among the degenerate family 
of equilibria $\hat u_s$ taking the values $z_j(s)$ 
on sets $\hat\Omega_j$, $j=l,m,r$ of measure $\hat\mu_j$, with 
  \begin{equation}\label{e:hatmu_cubic}
      \hat\mu_l = \hat\mu_m = \hat\mu_r = \frac13.
  \end{equation}
By consequence of the trace relation,
the equilibria $\hat u_s$ all have mean $\int_\Omega \hat u_s =0$ independent of $s$.

\subsection{Heuristics: solutions with three values}\label{ss:heuristics_cubic} 
The main idea for non-convergence with the cubic nonlinearity is similar
to that for the piecewise linear case: Perturbing the equilibria $\hat u_s$
by moving a tiny amount of mass from the stable phases into the unstable phase can cause a slow
drift by a large amount. The mean force does not appear to evolve in such a
simple way as before, so we provide a different motivation.  

Consider a solution taking three values $u_j(t)\in\Phi_j$,
$j=l,m,r$,  on sets $\Omega_j$ respectively having measures
\begin{equation}
    \mu_l = \tfrac13-\eps_l\,, \quad
    \mu_m = \tfrac13+\eps_l+\eps_r\,, \quad
    \mu_r = \tfrac13-\eps_r\,,
\end{equation}
for small positive constants $\eps_l,\eps_r$, and assume that
$ 0=\bar u = \mu_l u_l + \mu_m u_m + \mu_r u_r$, 
which entails
\begin{equation}\label{eq:heu_avg}
\tfrac13 (u_l+u_m+u_r) = 
\eps_r(u_r-u_m)  -  \eps_l (u_m-u_l)  \,.
%\eps_l (u_l -u_m) + \eps_r(u_r-u_m) \,.
\end{equation}
Note that $u_r-u_m$ and $u_m-u_l$ are positive, and that 
$z_r(s)-z_m(s)$ and $z_m(s)-z_l(s)$ change in opposite directions as $s$ increases.
Thus we are motivated to examine the dynamics of the ``phase ratio''
\begin{equation}
    R = \frac{u_r-u_m}{u_m-u_l}
\end{equation}
as a proxy for the level of $\bar f(t)$. We find that  %\fix{$u_\ell$ or $u_l$? $u_l$ I guess}
\begin{align}
  \partial_t R %\left(\frac{u_r-u_m}{u_m-u_l}\right) 
  &=  - \left(\frac{f(u_r)-f(u_m)}{u_r-u_m}-\frac{f(u_m)-f(u_l)}{u_m-u_l}\right) R 
  \nonumber
  \\   &=  - (u_l+u_r+u_m)(u_r-u_l) R %\frac{u_r-u_m}{u_m-u_l}
  \,,
%  \frac{u_r-u_m}{u_m-u_l}
\label{eq:Revol1}
\end{align}
since $u^3-v^3=(u^2+uv+v^2)(u-v)$. By \eqref{eq:heu_avg} this becomes
\begin{align}
  \partial_t R %\left(\frac{u_r-u_m}{u_m-u_l}\right) 
%    \\ &=  -\frac{(u_r-u_m)(u_r-u_l)}{u_m-u_l}(u_l+u_r+u_m)
    &=   3\Bigl(\eps_l(u_m-u_l)-\eps_r (u_r-u_m)\Bigr)
    (u_r-u_l) R %\frac{u_r-u_m}{u_m-u_l}
    \nonumber
    \\ &=  3\eps_r 
    \left(\frac{\eps_l}{\eps_r}-R \right)(u_r-u_l)(u_r-u_m).
%    \left(\frac{\eps_l}{\eps_r}-\frac{u_r-u_m}{u_m-u_l} \right)(u_r-u_l)(u_r-u_m).
\label{eq:Revol2}
\end{align}
Thus the ratio $R$ is driven to approach $\eps_l/\eps_r$ at a slow exponential rate.
Similar to the piecewise linear case, the key to obtain non-convergence will be to 
ensure that solutions behave like these three-value solutions over long time intervals, 
with the ratio $\eps_l/\eps_r$ effectively held close to constant,
but forced to change substantially infinitely many times.
%\bigskip \hrule \bigskip

\subsection{Initial data and main result}\label{ss:ic_cubic}

We will consider initial data structured in a way roughly similar to the piecewise linear case, 
but will now allow for small transition zones $\tilde\Omega_j\subset\Omega_j$. 
In case $\Omega=[0,1]$ or a bounded domain in $\R^d$, say, these transition zones
permit the initial data to be chosen
to smoothly interpolate between locally constant values in the rest of $\Omega$.
The resulting solution is then a smooth function of $x$ and $t$.  

Similar to before, we write $\mu_j = \nu(\Omega_j)$ for $j=l,m,r$ and all $j\ge0$, and suppose that
\begin{equation}\label{e:mu_lmr_cubic}
\mu_l = \frac13-\sum_{j\,{\rm odd}} \mu_j \,,
\quad\mu_m = \frac13\,, 
\quad \mu_r = \frac13-\sum_{j\,{\rm even}} \mu_j \,,
\quad \mu_j = \mu_0\eta^j\,, \quad j=0,1,2,\ldots,
\end{equation}
where $\eta>0$ is sufficiently small; it will suffice to suppose $\eta\le\frac18$.
Further, we take $\tilde\Omega_j\subset\Omega_j$ to satisfy  
\begin{equation}\label{eq:tildemuj}
\tilde\mu_j = \nu(\tilde\Omega_j)=\theta \mu_j \,,  \quad\text{ with  $\theta \in[0,\eta^2)$.}
\end{equation}
Like before, we will assume the initial data satisfy $u(x,0)=v_0(x)-\bar v_0$, but where now
\begin{equation} \label{d:ic_cubic}
v_0(x) = \begin{cases}
 -1,\ 0,\ 1 &\text{in}\quad \Omega_l,\ \Omega_m,\ \Omega_r \text{\ \ respectively,}
\\[6pt] 
%{(-1)^j\alpha_j} &  \text{in $\Omega_j$}\,, \ {j=0,1,2\ldots.} 
%v_0(x) = \begin{cases}
%                -1 &\text{ in } \Omega_l\,,\\
%                0 &\text{ in } \Omega_m\,,\\
%                1 &\text{ in } \Omega_r\,,\\
                (-1)^j\alpha_j &\text{ in } \Omega_j\setminus\tilde\Omega_j\,,\quad j=0,1,2,\ldots.
%                (-1)^j\tilde u_j(x,0) &\text{ in } \tilde\Omega_j \,,\;j=0,1,2,\ldots,
            \end{cases}
\end{equation}
Furthermore, setting $\alpha_{-2}=\alpha_{-1}=1$ we require that 
%together with the condition that 
\begin{equation}\label{e:tilde_order}
  0< \alpha_j\le  (-1)^j v_0(x) \le\alpha_{j-2}  \quad\text{in $\tilde\Omega_j$,}\quad j=0,1,2,\ldots.
\end{equation}
This means $v_0(x)$ is between $(-1)^j\alpha_j$ and $(-1)^j\alpha_{j-2}$ whenever $x\in\tilde\Omega_j$, 
for all $j\ge0$.
Note that we recover piecewise constant initial data by taking either 
$\theta =0$ or  $u(x,0)\equiv \alpha_j$ for all $x\in\Omega_j$.
The positive constants $\alpha_j$ must be small and decrease to zero sufficiently rapidly as described below.  

Under the mild smallness conditions 
\begin{equation}\label{c:small_cubic}
   \eta\le \tfrac18, \qquad \mu_0\le \tfrac1{10} , \qquad \alpha_0\le\tfrac12, \qquad \theta\le\eta^2,
\end{equation}
we can ensure that the initial values are in the correct phases,
with  $u(x,0)\in\Phi_j$ whenever $x\in\Omega_j$ for $j=l,m,r$, and 
$(-1)^j\alpha_j-\bar v_0\in\Phi_m$ for all $j\ge0$:
Observe that 
\begin{equation}
   \bar v_0 = \mu_r-\mu_l + \sum_{j\ge 0} 
   \left( (-1)^j\alpha_j\mu_j + 
   \int_{\tilde\Omega_j} (v_0(x)-(-1)^j \alpha_j)d\nu(x)
   \right).
\end{equation}
We have that $\mu_r-\mu_l %=-\mu_0\sum_{j\ge0}(-1)^j\eta^j
=-\frac{\mu_0}{1+\eta}$, that $0<\sum_{j\ge0} (-1)^j\alpha_j\mu_j<\alpha_0\mu_0$,
and
\[
 \sum_{j\ge0} \left|   \int_{\tilde\Omega_j} (v_0(x)-(-1)^j \alpha_j)d\nu(x)\right| \le 
 \sum_{j\ge0} \theta \mu_j \alpha_{j-2} \le \frac{\eta^2\mu_0}{1-\eta} \,.
\]
%Since we assume \fix{$\eta\le\frac18$},  the assumption \fix{$\alpha_0<\frac12$} 
Then \eqref{c:small_cubic} implies $\alpha_0<\frac1{1+\eta}$, hence 
$ |\bar v_0| \le \mu_0\left(\frac1{1+\eta} + \frac{\eta^2 }{1-\eta}\right) \le \mu_0.  $
Noting $b-1=\frac{2\sqrt3-3}3>\frac1{10}\ge\mu_0$, 
it follows  that $-1+\bar v_0\in\Phi_l$, that $1+\bar v_0\in\Phi_r$, and that 
$\alpha_0+|\bar v_0|\le \frac6{10}<\hat a$. 
This will ensure all the initial values are in the correct phases as stated.

Our main result in this section may now be stated as follows.

\begin{theorem}[Non-convergence with cubic $f$]\label{thm:cubic}
   Let the initial values $u(x,0)=v_0(x)-\bar v_0$ as described above.
   % \eqref{d:ic_cubic} with $w_0$ chosen such that \eqref{e:ubar0_cubic}. 
   Assume \eqref{c:small_cubic}  %[Let $\eta\leq \frac18$ and $\mu_0\leq \eta^2$] 
%  \red (double check) \nc ,
   and assume 
        $(\alpha_j)_{j\geq 0}$ is a positive decreasing sequence satisfying 
        \begin{equation}\label{cond:alphak;tkorder;f_cubic}
            \alpha_{j}\leq  \mu_{j}
	    \left(\frac{\alpha_{j-1}}{2}\right)^{1/\mu_{j}}  
%     \quad\text{and}\quad
        %\begin{equation}\label{cond:thetaj;order}
%            \theta_j\leq \eta^2
	    \quad\text{for all $j\ge1$.}
        \end{equation}
Then:
  % \begin{itemize} \item[(i)] 
  (i) The phase transition times $\tau=\tau(\Omega_j\setminus\tilde\Omega_j)$ satisfy $\tau_m=+\infty$ and $\tau_{j+1}>\tau_j$ for all $j\geq 0$, with 
        \begin{equation}\label{eq:tauk_ubd}
%            e^{\tau_j}\leq \left(\frac{2}{\alpha_j}\right)^{1/\mu_{j+1}} .
            e^{\tau_j}\leq \left(\frac{\alpha_j}{2}\right)^{-1/\mu_{j+1}} .
        \end{equation}
    %   \item[(ii)] 
    (ii) If moreover for sufficiently large $j$  we have % $(\alpha_j)_{j\geq 0},\,(\theta_j)_{j\geq 0}$ satisfy
       \begin{equation} \label{c:alphak_cubic}
        \alpha_{j} \le \frac{1}{24}  \left( \frac{\alpha_{j-1}}{2}\right)^{1/\mu_{j}} e^{-2\kappa_{j}}  \,,
        \quad\text{where}\quad
      %  \quad\text{and}\quad \theta_j\le\alpha_j \,,%$\text{ for all } j\geq 0
%        \end{equation}
%        where
%        \begin{equation}\label{c:kappak}
            \kappa_{j} %:=\frac{1}{h_j}\log\left(\frac{6}h_j\right) 
            := \frac{3}{\mu_{j+1}}\log\left(\frac{18}{\mu_{j+1}}\right) \,, %\text{ for each } j\geq 0,
        \end{equation}
        then $u(\cdot,t)$ does not converge as $t\to\infty$ (in any $L^p$, $1\le p<\infty$).
%   \end{itemize}
\end{theorem}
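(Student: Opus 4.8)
The plan is to follow closely the structure already established for the piecewise-linear case in Section~\ref{s:nonconvergence-PL}, adapting the estimates to the cubic nonlinearity where the mean force no longer evolves by such a simple ODE. First I would establish part (i): the claim $\tau_m = +\infty$ follows just as before, using preservation of order (Lemma~\ref{lem:monotone}), invariance of $[a,b]$ (Lemma~\ref{lem:invar_rgn}), and mass conservation $\bar u = 0$; supposing $u_m(\tau_m) = \pm\hat a$ leads to a contradiction with $\bar u = 0$ given the measures $\hat\mu_l = \hat\mu_m = \hat\mu_r = \tfrac13$ and the trace relation. The key quantitative input is the analogue of Corollary~\ref{cor:expt}: while the difference of two points in the middle phase no longer grows by the exact factor $e^t$, one still has a comparison since $\partial_t(u(x,t) - u(y,t)) = -(f(u(x,t)) - f(u(y,t)))$ and $f'(z) = 3z^2 - 1 \in [-1, \text{something}]$ on $\Phi_m$, so for points remaining in $\Phi_m$ the difference grows at least like $e^{ct}$ for a fixed $c > 0$ (in fact $c$ close to $1$ near the center). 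Combined with bounds on $u_m(t) \mp \hat a$ of the form $\mu_{j_l(t)}$ or $\mu_{j_r(t)}$ — the cubic analogue of Lemma~\ref{lem:umbounds}, proved again by order and invariance inequalities — the ordering $\tau_{j+1} > \tau_j$ and the explicit bound \eqref{eq:tauk_ubd} follow from hypothesis \eqref{cond:alphak;tkorder;f_cubic} exactly as in the piecewise-linear argument.

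For part (ii), the strategy is to show that on each inter-transition interval $(\tau_{k-1}, \tau_k)$ the solution is well-approximated by a three-value solution of the type analyzed in Subsection~\ref{ss:heuristics_cubic}, for which the phase ratio $R = (u_r - u_m)/(u_m - u_l)$ is driven exponentially toward $\eps_l/\eps_r$ by \eqref{eq:Revol2}. The measures $\eps_l(t), \eps_r(t)$ of the sets yet to transition are, as in \eqref{d:ep_lr}–\eqref{eq:dtbarf3}, essentially $\mu_{j_l}/(1-\eta^2)$ and $\mu_{j_r}/(1-\eta^2)$, so on $(\tau_{k-1}, \tau_k)$ the ratio $\eps_l/\eps_r$ is held close to $\eta^{\pm 1}$ depending on the parity of $k$. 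One then needs: (a) a lower bound on the length $\tau_k - \tau_{k-1}$ ensuring enough time for $R$ to relax — this comes from \eqref{c:alphak_cubic} and the bound \eqref{eq:tauk_ubd}, giving $\tau_k - \tau_{k-1} \geq \kappa_k \to \infty$ in an appropriate scaled sense; and (b) control of the error between the genuine (infinitely-many-valued) solution and the model three-value solution, coming from the fact that the "extra" transition zones $\tilde\Omega_j$ and the tails $\sum_{j > k}\mu_j$ carry total measure $O(\mu_{k+1}) = O(\eta\mu_k)$, which is negligible compared to $\mu_k$. Consequently $R(\tau_k)$ is forced alternately near $\eta$ and near $\eta^{-1}$ (equivalently $\bar f(\tau_k)$ near two distinct values $\pm\tfrac12\frac{1-\eta}{1+\eta}$, matching the piecewise-linear conclusion), so $u(\cdot, t)$ cannot converge in $L^2$; non-convergence in every $L^p$, $1 \le p < \infty$, follows since the $\omega$-limit set contains the full arc of equilibria $\hat u_s$ for $|s|$ up to that value and these are distinct in every $L^p$.

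The main obstacle, compared to the piecewise-linear case, is step (b): the lack of an exact autonomous ODE for $\bar f(t)$ means one cannot simply integrate \eqref{eq:dtbarf3}. Instead I expect to work with the phase-ratio equation \eqref{eq:Revol2}, or equivalently track $u_m(t)$ and the spread $u_r(t) - u_l(t)$ directly, and to show that the contributions to the dynamics from the points not yet transitioned but not at the "bulk" values (those in $\tilde\Omega_j$ with values strictly between $\alpha_j$ and $\alpha_{j-2}$) and from already-transitioned mass produce only a perturbation of relative size $O(\eta)$ in the coefficients of the effective ODE on $(\tau_{k-1},\tau_k)$. This requires a Gronwall-type estimate on the interval, using that $u_r - u_l$ stays bounded above and below (by invariance and order, $u_r - u_l \le b - a$ and $u_r - u_l$ bounded below once $u_m$ is bounded away from $\pm\hat a$, which the Lemma~\ref{lem:umbounds} analogue provides), so that the exponential relaxation rate $3\eps_r (u_r - u_l)(u_r - u_m)$ in \eqref{eq:Revol2} is comparable to $\mu_k$ up to universal constants. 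Assembling these estimates and verifying that the length lower bound from \eqref{c:alphak_cubic} dominates the (universal constant)$\times$(relaxation-rate)$^{-1}$$\times\log(1/\text{error tolerance})$ is where the bookkeeping in the constants $\tfrac{1}{24}$ and $\kappa_j$ comes from; that arithmetic, while tedious, is routine once the comparison framework is in place.
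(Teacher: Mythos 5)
Your overall architecture matches the paper's (order the transition times via preservation of order and bounds on $u_m$, then drive the phase ratio $R$), but there are two concrete gaps. First, in part (i) you assert that differences of points in $\Phi_m$ grow at a \emph{uniform} exponential rate $c>0$. This is false: $f'(u)=3u^2-1$ vanishes at the endpoints $\pm1/\sqrt3$ of $\bar\Phi_m$, so the expansion rate degenerates there. The correct statement (the paper's \eqref{bd:f_ge}) gives $|f(u)-f(v)|\ge h|u-v|$ only when the points stay at distance $\ge h$ from both $\hat a$ and $\hat b$, and the cubic analogue of Lemma~\ref{lem:umbounds} (Lemma~\ref{lem:um_bd;tk;cubic}) supplies only $h=\mu_{j+1}$. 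This degenerate rate is exactly why \eqref{cond:alphak;tkorder;f_cubic} and \eqref{eq:tauk_ubd} carry the exponent $1/\mu_{j+1}$ rather than a constant; with a uniform rate $c$ the conclusion would read $e^{\tau_j}\le(2/\alpha_j)^{1/c}$, which is not what is claimed. One also needs the complementary Gronwall bound $|u_m-u_{j+1}|\le\alpha_{j+1}e^{t}$ (from $|f'|\le1$ on $\Phi_m$) to play against the slow lower bound $|u_m-u_j|\ge\alpha_j e^{\mu_{j+1}t}$; it is the mismatch of these two rates that contradicts \eqref{cond:alphak;tkorder;f_cubic} if $\tau_{j+1}\le\tau_j$. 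Your sketch does not separate these two estimates, so the ordering argument as described does not close.

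Second, in part (ii) your error accounting identifies the wrong dominant term. The un-transitioned tail $\sum_{j>k}\mu_j=O(\mu_{k+1})$ is indeed harmless, but the \emph{already-transitioned} pieces $\Omega_j$ with $j<k$ have total measure of order $\mu_0$, vastly larger than $\mu_k$, and just after $\tau_{k-1}$ the piece $\Omega_{k-1}$ sits at distance $O(1)$ from $u_l$ (or $u_r$) while having measure $\mu_{k-1}=\mu_k/\eta>\mu_k$. So immediately after a transition the perturbation to the three-value dynamics is large relative to the drift coefficient $\mu_k$ in the $R$-equation, not ``$O(\eta)$'' as you claim. What saves the argument is exponential contraction toward $u_l,u_r$ in the stable phases, but only at the slow rate $h_k=\frac13\mu_{k+1}$ (again because $f'$ vanishes at $\hat a,\hat b$); one must wait a time of order $\kappa_k=\frac1{h_k}\log(6/h_k)$ before this error drops below $O(\mu_{k+1})$, and then a further $\kappa_k$ for $R$ to move appreciably. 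This is precisely the origin of $\kappa_j$ in \eqref{c:alphak_cubic}, which you dismiss as routine bookkeeping. Finally, the paper does not (and need not) show that $R(\tau_k)$ relaxes all the way to $\eta^{\pm1}$: it argues by contradiction, assuming $u(\cdot,t)$ converges so that $R$ (or $R^{-1}$) stays below $2$ for large $t$, and then showing $R$ must grow from $2h_k$ to at least $12$ within time $2\kappa_k$ of some $\tau_{k-1}$. Your stronger direct claim would require additional control (e.g.\ on the sign of $1-\eta R$ when $R$ is not a priori bounded) that your sketch does not supply.
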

\begin{remark}\label{rmk:cubic_main}
    Note $\kappa_j\geq 1$ for all $j\ge0$, as $\mu_{j+1}\leq \mu_0 \leq \frac{1}{10}$.
    Condition \eqref{c:alphak_cubic} is much stronger than \eqref{cond:alphak;tkorder;f_cubic} as
    \[e^{-2\kappa_{j}}=\left(\frac{\mu_{j+1}}{18}\right)^{6/\mu_{j+1}}\ll\mu_{j}.
    \qedhere \]
\end{remark}

 \begin{figure}
    \centering
    \begin{tikzpicture}
    \begin{axis}[
        ymin = -0.1, ymax=0.8,
        ytick = {0,0.2,0.6},
        yticklabels={$\alpha_{j+2}$,$\alpha_j$,$\alpha_{j-2}$},
        xtick = \empty,
        axis lines = left,
        at={(0,-0.1)},
        %hide axis,
        x label style={at={(axis description cs:1,0)},anchor=north},
        y label style={at={(axis description cs:-0.007,1.07)},anchor=north, rotate=-90},
        xlabel = \(x\),
        ylabel = {\(v_0\)},
        %every axis plot/.append style={thick},
        legend pos=north west,
        width=0.9\textwidth,
        height=8cm,
    ]

    \addplot [
        domain=0:0.6, 
        samples=100, 
        color=black,
        style={thick},
    ] 
    {0};

    % \addplot [
    %     domain=0.6:0.9, 
    %     samples=100, 
    %     color=black,
    %     style=dashed,
    % ] 
    % {0};

    \addplot [
        domain=0.6:0.9, 
        samples=100, 
        color=black,
        style={thick},
    ] 
    {0.2*(1/(1+exp(-60*(x-0.75)))-0.5)+0.1};

    \addplot [
        domain=0.9:1.8, 
        samples=100, 
        color=black,
        style={thick},
    ] 
    {0.2};

    % \addplot [
    %     domain=1.8:2.7, 
    %     samples=100, 
    %     color=black,
    %     style=dashed
    % ] 
    % {0.2};

    \addplot [
        domain=1.8:2.7, 
        samples=100, 
        color=black,
        style=thick,
    ] 
    {0.4*(1/(1+exp(-18*(x-2.25)))-0.5)+0.4};

    \addplot [
        domain=2.7:5, 
        samples=100, 
        color=black,
        style={thick},
    ] 
    {0.6};

    \end{axis}
    % \draw[help lines, step=1, green, thin] (0,0) grid (11,7);
    % \node at (0,0)[circle, fill, inner sep=2 pt]{};	  

    \draw[dotted](0,0) .. controls (0.7,-0.3) and (1.6,-0.3) .. (2.5,0);
    \node at (1.15,-0.5){$\Omega_{j+2}$};
    \node at (2.05,6.25){$\tilde\Omega_{j+2}$};
    
    \draw[dotted](2.5,0) .. controls (3.9,-0.3) and (5.6,-0.3) .. (7.15,0);    
    \node at (4.8,-0.5){$\Omega_{j}$};
    \node at (6,6.25){$\tilde\Omega_{j}$};

    \draw[dotted](7.15,0) .. controls (9.2,-0.3) and (11,-0.3) .. (13,0);
    \node at (10,-0.5){$\Omega_{j-2}$};

    \draw[dotted](1.6,0) -- (1.6,6);
    \draw[dotted](2.4,0) -- (2.4,6);
    \draw[dotted](1.6,6) .. controls (1.8,5.7) and (2.2,5.7) .. (2.4,6);

    %\node at (-0.5,0.9){$\alpha_{j+2}$};

    \draw[dotted](7.15,0) -- (7.15,6);
    \draw[dotted](4.8,0) -- (4.8,6);
    \draw[dotted](4.8,6) .. controls (5.2,5.7) and (6.4,5.7) .. (7.15,6);

    %\node at (-0.5,2.7){$\alpha_j$};
    %\node at (-0.5,6){$\alpha_{j-2}$};

    \end{tikzpicture}
        \caption{Schematic illustration of smooth initial data near $\Omega_j$ for $j$ even, with transition zones $\tilde\Omega_j$ and $\tilde\Omega_{j+2}$.}

        \label{fig:smooth_data}
    \end{figure}
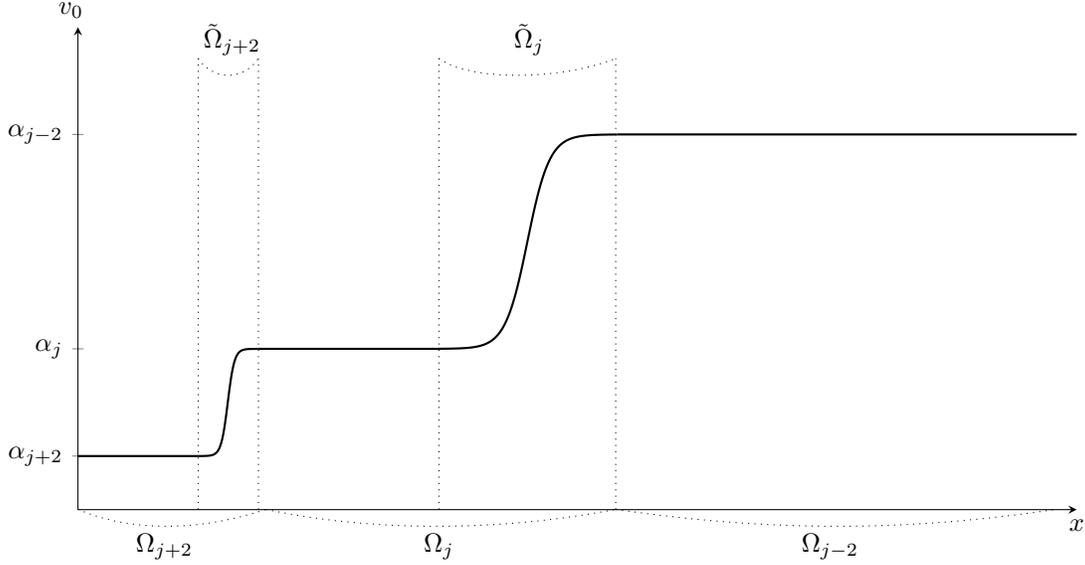

\begin{remark}[Smooth initial data]\label{rmk:smooth_init;cubic}
    To construct smooth initial data satisfying the assumptions of Theorem~\ref{thm:cubic}, first consider the case $\Omega=[0,1]$ with 
    intervals $\Omega_j$ of length $\mu_j$ defined as in Remark \ref{rmk:monotone_init;pwl}, but with the numbers $\frac14$ and $\frac34$ replaced by $\frac13$ and $\frac23$ respectively. 
    Fix a smooth, nondecreasing ``ramp'' function $\Theta:\R\to\R$ 
    such that $\Theta(x)=0$ for $x\le 1-\theta$ and $\Theta(x)=1$ for $x\ge1$. 
    Then set
    \begin{equation*}
    v_0(x) = \begin{cases}
 -1,\ 0,\ 1 &\text{ in}\quad \Omega_l,\ \Omega_m,\ \Omega_r \text{\ \ respectively,} \\
   \tilde v_j(x) &\text{ in } \Omega_j\,,\quad j=0,1,2,\ldots,
            \end{cases}
    \end{equation*}    
where $\tilde v_j$ is a 
    smooth function interpolating between $(-1)^j\alpha_j$ and $(-1)^j\alpha_{j-2}$ on $\Omega_j$, defined by 
    \[
    \tilde v_j(x) =
    \begin{cases}
   \phantom{+}\alpha_j + (\alpha_{j-2}-\alpha_j)\Theta\Bigl( \frac{x-c_j}{\mu_j}\Bigr)  
   & \text{for $j$ even, with } c_j=\inf \Omega_j,
   \\
   -\alpha_j - (\alpha_{j-2}-\alpha_j)\Theta\Bigl( \frac{c_j-x}{\mu_j}\Bigr)  
   & \text{for $j$ odd, with }  c_j=\sup \Omega_j.
   \end{cases}
    \]
   Then $\tilde v_j(x)=(-1)^j\alpha_j$ on $\Omega_j\setminus\tilde\Omega_j$, where
    $\tilde\Omega_j\subset\Omega_j$ is the closed interval of length $\theta\mu_j$ 
    at the right end of $\Omega_j$ for $j$ even (resp.\ at the left end for $j$ odd).
    Clearly $v_0$ is smooth everywhere in $[0,1]$ except possibly at the endpoints of $\Omega_m=[\frac13,\frac23]$. However, $v_0$ is smooth at these endpoints also, as a consequence of the fact
    that in $\tilde\Omega_j$ we have $\D_x^k v_0 = O(\mu_j^{-k}\alpha_{j-2})$ which approaches
    zero as $j\to\infty$ for each fixed $k\ge1$.

    We can make a similar construction of smooth initial data on a domain $\Omega=B(0,r)\subset\R^d$ for suitable $r>0$ using a radial construction. 
   Further, such radial initial data $u$ can be composed with any smooth volume-preserving diffeomorphism from $\R^d$ to $\R^d$ to produce more general smooth initial data in $\R^d$ with the same distribution of values.
\end{remark}

\subsection{Ordering of transition times}\label{ss:order_cubic}
We prove part (i) of Theorem~\ref{thm:cubic} in this subsection.
Henceforth, for $j=l,m,r$ we let $u_j(t)$ denote the value of $u(x,t)$ in $\Omega_j$. 
For $j=0,1,2,\ldots,$ we let $u_j(t)$ denote the value of $u(x,t)$ in 
$\Omega_j\setminus\tilde\Omega_j$, and we define $\tau_j=\tau(\Omega_j\setminus\tilde\Omega_j)$
be the corresponding phase transition time.
%\fix{It seems simpler for readers to keep this definition?}
\begin{lemma}\label{lem:tau_m=infty}
The transition time $\tau_m=\tau(\Omega_m)=+\infty$.
That is, $u_m(t)\in\Phi_m$ for all $t\ge0$. 
\end{lemma}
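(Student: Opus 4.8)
The plan is to mimic the argument already used for the piecewise-linear case (the proof that $\tau_m = +\infty$ there), combining preservation of order (Lemma~\ref{lem:monotone}), invariance of $[a,b]$ (Lemma~\ref{lem:invar_rgn}(i)), and mass conservation $\bar u \equiv 0$. The idea is that if $u_m$ were to reach the boundary of $\Phi_m$ — say $u_m(\tau_m) = \hat a = \tfrac{1}{\sqrt3}$ — then at that instant, because order is preserved, \emph{all} the values $u_j(t)$ for $j$ even (and $u_r$) would lie in $[\hat a, b]$, while all the values $u_j(t)$ for $j$ odd (and $u_l$) would lie in $[a, \hat a]$ with the additional information that they sit below $u_m(\tau_m)=\hat a$. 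Plugging these one-sided bounds into $\bar u = \sum_j \mu_j u_j = 0$ should produce a strict inequality $0 > 0$ or $0 < 0$, a contradiction. The symmetric case $u_m(\tau_m) = \hat b = -\tfrac1{\sqrt3}$ is handled identically.

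Concretely, first I would recall the ordering that follows from Lemma~\ref{lem:monotone} together with the structure of the initial data \eqref{d:ic_cubic}: for all $t \ge 0$,
\[
a \le u_j(t) < u_m(t) < u_k(t) \le b
\qquad\text{for $j$ odd or $j=l$, and $k$ even or $k=r$,}
\]
which persists as long as $u_m(t)\in\Phi_m$ (hence up to $t=\tau_m$ inclusive by continuity). Suppose for contradiction $\tau_m<\infty$ and $u_m(\tau_m)=\hat a$. Split the total mass: the indices $j$ odd together with $j=l$ carry total measure $\mu_l + \sum_{j\,\mathrm{odd}}\mu_j = \tfrac13$ by \eqref{e:mu_lmr_cubic}, and on that set $u(x,\tau_m) < u_m(\tau_m) = \hat a$, with the crude lower bound $u(x,\tau_m)\ge a$; the complementary indices ($j$ even, $j=m$, $j=r$) carry total measure $\tfrac13+\tfrac13 = \tfrac23$ and there $u(x,\tau_m) \ge \hat a$. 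Therefore
\[
0 = \bar u \;>\; a\cdot\tfrac13 + \hat a\cdot\tfrac23 \;=\; -\tfrac{2}{\sqrt3}\cdot\tfrac13 + \tfrac1{\sqrt3}\cdot\tfrac23 \;=\; 0,
\]
a contradiction. (One must be slightly careful that the inequality is strict: since $u_m(\tau_m)=\hat a$ exactly, and the middle-phase points with $j\ge0$ satisfy $u_j(\tau_m)<u_m(\tau_m)$ strictly, or one simply notes the left-phase block is strictly above $a$ on a set of positive measure; either way strictness is available.) The case $u_m(\tau_m)=\hat b$ follows by the mirror estimate using $\mu_l+\sum_{j\,\mathrm{even}}\mu_j + \mu_m = \tfrac23$ on the side at or below $\hat b$. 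Hence $\tau_m=+\infty$, i.e.\ $u_m(t)\in\Phi_m$ for all $t\ge0$.

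The one genuinely delicate point — the only real obstacle — is bookkeeping the measures and the transition zones $\tilde\Omega_j$ correctly: the initial data on $\tilde\Omega_j$ are not exactly $(-1)^j\alpha_j$ but lie between $(-1)^j\alpha_j$ and $(-1)^j\alpha_{j-2}$ by \eqref{e:tilde_order}, so these points still start in $\Phi_m$ (as already verified after \eqref{c:small_cubic}) and their order relative to $u_m$ is governed by $\alpha_j$'s; in particular the points of $\tilde\Omega_j$ are still on the ``odd'' side of $u_m$ when $j$ is odd and the ``even'' side when $j$ is even, consistent with the ordering above. Once that ordering is in hand the mass-conservation contradiction is immediate and exact because of the symmetric choice $a=-b$, $\hat a = -\hat b$, and the partition sums in \eqref{e:mu_lmr_cubic} adding to $\tfrac13,\tfrac13,\tfrac13$. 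I do not anticipate needing the stronger hypotheses \eqref{cond:alphak;tkorder;f_cubic} for this lemma; those enter only for the ordering $\tau_{j+1}>\tau_j$ in the next step.
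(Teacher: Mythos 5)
Your proposal is correct and is essentially the paper's own argument: preservation of order gives that the set $\{x: u(x,t)<u_m(t)\}$ has measure $\hat\mu_l=\tfrac13$ (and symmetrically $\tfrac13$ above), and then invariance of $[a,b]$ plus $\bar u=0$ yields the same contradiction $0=\bar u> a\cdot\tfrac13+\hat a\cdot\tfrac23=0$ at a putative escape time. Your extra care about strictness and about the transition zones $\tilde\Omega_j$ is sound but not a departure from the paper's route.
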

\begin{proof}
By preservation of order we have 
\[
\nu(\{x: u(x,t)<u_m(t)\}) =\hat\mu_l = \tfrac13,
\qquad
\nu(\{x: u_m(t)<u(x,t)\}) =\hat\mu_r = \tfrac13.
\]
By invariance of $[a,b]$, if $u_m(t)$ escapes $\Phi_m$ on the right at some finite time $t_*$, 
then $u_m(t_*)=\hat a=-\frac12 a$ and 
\[
0 = \bar u > a \hat\mu_l + u_m(t_*)(\hat\mu_m+\hat\mu_r) = 0,
\]
a contradiction. Similarly, if $u_m(t_*)=\hat b = -\frac12 b$,
\[
0 = \bar u < u_m(t_*)(\hat\mu_l+\hat \mu_m)+ b \hat\mu_r = 0.
\]
Hence $u_m(t)\in\Phi_m$ for all $t\ge0$.
\end{proof}
Note now that by \eqref{e:tilde_order} and preservation of order, for any $x\in \Omega_j$, 
$u(x,t)$ can exit $\Phi_m$ only at $\hat b$ if $j$ is odd, and only at $\hat a$ if $j$ is even.
Since $(-1)^j(u(x,t)-u_j(t))\ge0$, the transition time 
\begin{equation}
    \tau(x)\le\tau_j \quad\text{ for any $x\in\tilde\Omega_j$, \quad $j=0,1,2,\ldots$}
\end{equation}
Let $\eps_l(t),\eps_r(t)$ be as defined in \eqref{d:ep_lr}.
Equivalently we have
\begin{align} 
    \eps_l(t) &= 
%    \nu(\{x: \hat b \le u(x,t)<u_m(t)\}) 
    \nu(\{x: \hat b\le u(x,0)<u_m(0) \text{ and } \tau(x)\ge t \})
\,,
\nonumber \\ %\quad
    \eps_r(t) &= 
%    \nu(\{x: u_m(t)<u(x,t)\le \hat a \}) 
    \nu(\{x: \hat a\ge u(x,0)>u_m(0) \text{ and } \tau(x)\ge t \})
\,.
%    \label{d:ep_r}
\label{d:ep_lr2}
\end{align}
These functions are left continuous in $t$. 
Because 
%$\nu(\Omega_j\setminus\tilde\Omega_j) = (1-\theta_j)\mu_j$ and  
%$\sum_{k=0}^\infty \mu_{j+2k} = {\mu_j}/(1-\eta^2)$ 
\[
\nu(\Omega_j\setminus\tilde\Omega_j) = (1-\theta )\mu_j 
\quad\text{ and }\quad
\sum_{k=0}^\infty \mu_{j+2k} = \frac{\mu_j}{1-\eta^2} ,
\]
by the assumption  $\theta \le\eta^2$ from \eqref{c:small_cubic} we have that
whenever $t\le \tau_j$ (so $u_j(t)\in[\hat b,\hat a]= \bar\Phi_m$),
\begin{equation}\label{i:eps_lr_trap}
(1-\eta^2)\mu_j \le 
(1-\theta )\mu_j
\le 
\left\{\!\begin{aligned}
&\eps_l(t) & \text{for $j$ odd}\\
&\eps_r(t) & \text{for $j$ even}
\end{aligned}
\right\}
\le \frac{\mu_j}{1-\eta^2}.
\end{equation}

        \begin{lemma}[Bounds on $u_m$]\label{lem:um_bd;tk;cubic}
Let $j_l(t)$ and $j_r(t)$ be defined as in Lemma~\ref{lem:umbounds}. Then 
            \begin{equation}\label{eq:um_bd;eps;general}
                \hat a - u_m(t)>\frac{(\hat b-a)\eps_l}{\hat\mu_m+\eps_r}>\mu_{j_l(t)},\qquad
                u_m(t)-\hat b>\frac{(b-\hat a)\eps_r}{\hat\mu_m+\eps_l}>\mu_{j_r(t)}.
            \end{equation}
        \end{lemma}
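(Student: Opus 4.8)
The plan is to follow the proof of Lemma~\ref{lem:umbounds} from the piecewise-linear case: combine the conserved mean $\int_\Omega u(x,t)\,d\nu(x)=0$ with preservation of order (Lemma~\ref{lem:monotone}) and the invariance of $[a,b]$, exploiting the arithmetic relations $\hat b=-\hat a$, $a=-2\hat a=-b$ (so $\hat b-a=b-\hat a=\hat a$) and $\hat\mu_l=\hat\mu_m=\hat\mu_r=\tfrac13$. Fix $t>0$. Since $\tau_m=+\infty$ by Lemma~\ref{lem:tau_m=infty}, $u_m(t)\in\Phi_m$. By preservation of order, as in the proof of Lemma~\ref{lem:tau_m=infty}, the sets $\{x:u(x,t)<u_m(t)\}$, $\{x:u(x,t)=u_m(t)\}$, $\{x:u(x,t)>u_m(t)\}$ have measures $\hat\mu_l,\hat\mu_m,\hat\mu_r$; and since $\Phi_l,\Phi_r$ are pointwise stable, no point can re-enter $\Phi_m$, so $\{x:\hat b\le u(x,t)<u_m(t)\}$ and $\{x:u_m(t)<u(x,t)\le\hat a\}$ are precisely the sets defining $\eps_l(t),\eps_r(t)$ in \eqref{d:ep_lr}. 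Thus $\Omega$ splits into five pieces of measures $\hat\mu_l-\eps_l,\ \eps_l,\ \hat\mu_m,\ \eps_r,\ \hat\mu_r-\eps_r$, on which $u(x,t)$ lies respectively in $[a,\hat b)$, $[\hat b,u_m(t))$, $\{u_m(t)\}$, $(u_m(t),\hat a]$, $(\hat a,b]$ by invariance of $[a,b]$.

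Integrating $0=\int_\Omega u\,d\nu$ and bounding $u(x,t)$ below by $a,\hat b,u_m(t),u_m(t),\hat a$ on these pieces (strictly so on the piece of measure $\eps_r>0$), then using $\hat b=-\hat a$ and $a=-2\hat a$,
\[
 0 > a(\hat\mu_l-\eps_l)+\hat b\,\eps_l+u_m(\hat\mu_m+\eps_r)+\hat a(\hat\mu_r-\eps_r);
\]
bounding $u(x,t)$ above by $\hat b,u_m(t),u_m(t),\hat a,b$ instead (strictly on the piece of measure $\eps_l$), and using $\hat b=-\hat a$ and $b=2\hat a$,
\[
 0 < \hat b(\hat\mu_l-\eps_l)+u_m(\hat\mu_m+\eps_l)+\hat a\,\eps_r+b(\hat\mu_r-\eps_r).
\]
Since $\hat\mu_l=\hat\mu_r=\hat\mu_m$ and $-a-\hat a=\hat a$, the constant terms combine as $-a\hat\mu_l-\hat a\hat\mu_r=\hat a\hat\mu_m$ in the first and, similarly, $-\hat b\hat\mu_l-b\hat\mu_r=\hat b\hat\mu_m$ in the second; solving each for $u_m(t)$ and using $\hat b-a=b-\hat a=\hat a$ gives exactly the two left-hand estimates of \eqref{eq:um_bd;eps;general}. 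For the right-hand estimates I would invoke \eqref{i:eps_lr_trap} (applicable with $j=j_l(t)$, odd, since $\tau_{j_l(t)}\ge t$) to get $\eps_l(t)\ge(1-\eta^2)\mu_{j_l(t)}$, together with the elementary bound $\eps_r(t)\le\sum_{j\ {\rm even}}\mu_j=\mu_0/(1-\eta^2)$, so that
\[
 \frac{(\hat b-a)\eps_l(t)}{\hat\mu_m+\eps_r(t)}\ \ge\ \frac{(1-\eta^2)\mu_{j_l(t)}/\sqrt3}{\tfrac13+\mu_0/(1-\eta^2)}\ >\ \mu_{j_l(t)},
\]
the last step being the one-line numerical fact $(1-\eta^2)/\sqrt3>\tfrac13+\mu_0/(1-\eta^2)$, valid when $\eta\le\tfrac18$ and $\mu_0\le\tfrac1{10}$ as in \eqref{c:small_cubic}. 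The companion bound $\tfrac{(b-\hat a)\eps_r}{\hat\mu_m+\eps_l}>\mu_{j_r(t)}$ follows by the mirror-image argument with $a\leftrightarrow b$, $\hat b\leftrightarrow\hat a$, $\eps_l\leftrightarrow\eps_r$, $j_l\leftrightarrow j_r$ interchanged.

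I expect the only delicate step to be the bookkeeping of the value-partition of $\Omega$: checking that pointwise stability of $\Phi_l,\Phi_r$ forces the two ``middle'' pieces to have measures exactly $\eps_l(t),\eps_r(t)$ as defined in \eqref{d:ep_lr}, and keeping the direction of every estimate straight when deducing two linear inequalities for $u_m(t)$ from the single identity $\int_\Omega u\,d\nu=0$. Everything after that is the special arithmetic of $a,\hat b,\hat a,b$ (which collapses the constants to the clean closed form) and the elementary estimates above against the smallness hypotheses \eqref{c:small_cubic}.
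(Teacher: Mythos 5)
Your proposal is correct and follows essentially the same route as the paper: the same two mean-conservation inequalities obtained from the five-piece value-partition of $\Omega$, the same cancellation identities $a\hat\mu_l+\hat a\hat\mu_m+\hat a\hat\mu_r=0$ and $\hat b\hat\mu_l+\hat b\hat\mu_m+b\hat\mu_r=0$, and the same use of \eqref{i:eps_lr_trap} in the numerator together with $\eps_l,\eps_r\le\mu_0/(1-\eta^2)<\frac19$ in the denominator. Your numerical check $(1-\eta^2)/\sqrt3>\frac13+\mu_0/(1-\eta^2)$ under \eqref{c:small_cubic} is valid and matches the paper's final step.
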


        \begin{proof}
        We will proceed as in the proof of Lemma \ref{lem:umbounds}. Preservation of order and invariance imply that
            \begin{align*}
            0=\bar u > a(\hat\mu_l-\eps_l) +\hat b\eps_l + u_m(t)(\hat\mu_m+\eps_r) + \hat a(\hat\mu_r-\eps_r) ,\\
            0= \bar u < \hat b(\hat\mu_l-\eps_l) + u_m(t)(\eps_l+ \hat\mu_m) + \hat a\eps_r + b (\hat\mu_r-\eps_r).
        \end{align*}
        The first inequality implies
        \begin{align*}
            \hat a - u_m(t)>
	    \frac{ \hat a(\hat\mu_m+ \eps_r)+ a(\hat \mu_l-\eps_l)+\hat b\eps_l+\hat a(\hat\mu_r-\eps_r)
	    }{\hat\mu_m+\eps_r} 
     %\\ =\frac{(a\hat\mu_l+\hat a\hat\mu_m+\hat a\hat\mu_r)+(\hat b-a)\eps_l}{\hat\mu_m+\eps_r}
            =\frac{(\hat b-a)\eps_l}{\hat\mu_m+\eps_r},
        \end{align*}
where we used $a\hat\mu_l+\hat a\hat\mu_m+\hat a\hat\mu_r=0$.
Similarly, using the second inequality and
$\hat b\hat\mu_l+\hat b\hat\mu_m+ b\hat\mu_r=0$,
we obtain
        \begin{align*}
            u_m(t)-\hat b>\frac{
            -\hat b(\hat\mu_l-\eps_l)-\hat a\eps_r-b(\hat\mu_r-\eps_r)
            -\hat b(\hat\mu_m+\eps_l)}{\hat\mu_m+\eps_l}
            =\frac{(b-\hat a)\eps_r}{\hat\mu_m+\eps_l}.
        \end{align*}
Finally, the remaining bounds in \eqref{eq:um_bd;eps;general} follow by applying  
the first inequality in \eqref{i:eps_lr_trap} in the numerators 
 and the bounds  $\eps_l,\eps_r\le \mu_0/(1-\eta^2)<\frac1{9}$ in the denominators.
        \end{proof}

In order to obtain the proper ordering of transition times, we need to control
the expansion rate of $|u_m(t)-u_j(t)|$ inside the unstable phase $\Phi_m$.
For this purpose, note that
\begin{equation}\label{bd:f_ge}
%|f(u)-f(v)|\ge \hat\lambda(h) |u-v|\ge h|u-v|
|f(u)-f(v)|\ge  h|u-v|
\quad\text{ whenever }
\begin{cases}
u,v\in\Phi_m \text{ with } (\hat a-v)\wedge(v-\hat b)\ge h, \text{ or }\\
u,v\in\Phi_r \text{ with } v-\hat a\ge h, \text{ or }\\
u,v\in\Phi_l \text{ with } \hat b-v \ge h.
\end{cases}
\end{equation}
%\blue 
To see this, suppose $u,v\in\Phi_m$ and $h\leq \hat a-v\leq v-\hat b$. 
Then necessarily $h\le\hat a$, as $(\hat a-v)\wedge (v-\hat b)\leq \hat a$.
Explicitly computing, since $uv$ and $v^2$ are each less than $\hat a(\hat a-h)$, and 
$3\hat a^2=1$, we have
\begin{align*}
    \left|\frac{f(u)-f(v)}{u-v}\right|=1-u^2-uv-v^2
    \ge 1-\hat a^2 - 2\hat a(\hat a-h) = 2\hat a h \ge h.
   % \frac{1-(1/\sqrt{3}-h)^2-1/\sqrt{3}(1/\sqrt{3}-h)-1/3}{|u-v|}
%   =\frac{h(\sqrt{3}-h)}\geq h
\end{align*}
By symmetry, we can deduce the same inequality when $(v-\hat b)\leq (\hat a-v)$. Similarly, when $u,v\in\Phi_r$ and $v-\hat a\ge h$,
\[\frac{f(u)-f(v)}{u-v}=u^2+uv+v^2-1\geq (\hat a+h)^2+\hat a(\hat a+h)+\hat a^2 - 1 = \sqrt{3}h+h^2\geq h\]
and the case when $u,v\in\Phi_l$ can be verified by analogous calculations.

\begin{proof}[Proof of Theorem \ref{thm:cubic} part (i)]
By preservation of order we have $\tau_{j+2}>\tau_j$ for all $j\ge0$. 
Supposing that $\tau_{j+1}\le\tau_j$ for some $j$, we may take $j$ minimal. 
Then for $0\le t\le\tau_{j+1}\le\tau_j$, both $j_l(t),j_r(t)\le j+1$, 
so the bounds in Lemma~\ref{lem:um_bd;tk;cubic} apply to yield
%\fix{chk?}
\[
(\hat a-u_m)\wedge(u_m-\hat b)>\mu_{j+1}.
\]
In case $j$ is odd, we infer that for all $t\le \tau_{j+1}$, 
        \[
        \D_t(u_m-u_j) = f(u_j)-f(u_m) %\ge  \hat\lambda(\mu_{j+1})(u_m-u_j) 
        \geq \mu_{j+1}(u_m-u_j), 
        \]
whence at $t=\tau_{j+1}$,
\begin{equation}\label{i:uj}
\hat a-\hat b > |u_m-u_j|\ge \alpha_j e^{\mu_{j+1}\tau_{j+1}}  \,.
\end{equation}
In case $j$ is even, the same inequality follows in similar fashion by computing $\D_t(u_j-u_m)$.
Now in either case, since $|f'|\le 1$ in $\Phi_m$ and $u_{j+1}(\tau_{j+1})=\hat a$ or $\hat b$,
use of Gronwall's inequality yields
\begin{equation}\label{i:uj+1}
\mu_{j+1}\le |u_m(\tau_{j+1})-u_{j+1}(\tau_{j+1})|\le \alpha_{j+1} e^{\tau_{j+1}} .
\end{equation}
The inequalities \eqref{i:uj}--\eqref{i:uj+1} imply
\[
\alpha_{j+1} \ge \mu_{j+1}e^{-\tau_{j+1}} > 
\mu_{j+1}\left(\frac{\alpha_j}{\hat a-\hat b}\right)^{1/\mu_{j+1}},
\]
which contradicts the assumption \eqref{cond:alphak;tkorder;f_cubic} since $\hat a-\hat b<2$. 
Hence $\tau_{j+1}>\tau_j$ for all $j$. 
The bound \eqref{eq:tauk_ubd} follows because in \eqref{i:uj} we can now replace $\tau_{j+1}$ by $\tau_j$.
\end{proof}

\subsection{Analysis of non-convergence}

By the result of part (i) of Theorem \ref{thm:cubic}, we have 
$\tau_{j+1}>\tau_j$ for all $j$. 
Then it follows that $|j_l(t)-j_r(t)|=1$ for all $t$, and whenever $t\leq \tau_j$, 
necessarily both $j_l(t),j_r(t)\le j+1$. Thus by Lemma \ref{lem:um_bd;tk;cubic},
 \begin{equation}\label{bd:um}
 \hat b+\mu_{j+1} < u_m(t) < \hat a-\mu_{j+1} \quad\text{whenever $t\le \tau_j$}.
 \end{equation}
 
In this section our goal is to prove part (ii) of Theorem~\ref{thm:cubic}.
The proof is more involved than in the piecewise-linear case. 
We proceed by examining the evolution of the phase ratio, 
then establish estimates involving exponential contraction in the stable phases,
and finish by an argument by contradiction.

\subsubsection{Evolution of the phase ratio}
Our strategy to obtain non-convergence is to study the evolution 
of the phase ratio $R$
defined exactly as in subsection~\ref{ss:heuristics_cubic}, by 
\begin{equation}\label{d:Ratio}
R = \frac{u_r-u_m}{u_m-u_l} \,.
\end{equation}
The evolution equation \eqref{eq:Revol1} 
continues to hold in the present context. 
In order to obtain an analog of \eqref{eq:Revol2},
we need to express the sum $u_l+u_m+u_r$ differently 
using conservation of mass.
For this purpose we alter the definition of $\eps_l(t),\eps_r(t)$
to always include whole pieces, as follows:

For any $t\ge0$, let 
$j_m(t)$ indicate the index of the {\em next} value $u_j$ to change phase
(by leaving $\bar\Phi_m=[\hat b,\hat a]$), so
\[
\jnex(t)=\min\{j:\tau_j\ge t\}=j_l(t)\wedge j_r(t).
\]
Then $t\in(\tau_{\jnex-1},\tau_{\jnex}]$. This means that if $j=j_m$ 
or $j_m+1$, 
then $u_j(t)\in \bar\Phi_m$ 
but $u_{j+2}(t)\notin\bar\Phi_m$, so $u(\tilde\Omega_j,t)$ may be split between phases.
For all {\em other} $j$, the sets $u(\Omega_j,t)$ are entirely in 
one phase---the unstable phase $\Phi_m$ if $t<\tau_j$, 
and one of the stable phases $\Phi_l$ or $\Phi_r$ if $t>\tau_j$, for $j$ odd or even
respectively.
Accounting only for those $j$ for which $u_j$ lies in (the closure of) the unstable phase, define
\begin{align}
\label{d:hatepsl}
\hat\eps_l(t) &= 
%\nu\left( \bigcup \left\{ \Omega_j: u(\Omega_j,t)\subset (\hat b,u_m(t))\right\} \right)
%= \sum_{{\rm odd}\,j>\jnex(t)}\mu_j
\nu\left( \bigcup \left\{ \Omega_j: \hat b\leq u_j(t)<u_m(t)\right\} \right)
= \sum_{{\rm odd}\,j\ge\jnex(t)}\mu_j
\,,\\
\hat\eps_r(t) &= 
%\nu\left( \bigcup \left\{ \Omega_j: u(\Omega_j,t)\subset (u_m(t),\hat a)\right\} \right)
%= \sum_{{\rm even}\,j>\jnex(t)}\mu_j
\nu\left( \bigcup \left\{ \Omega_j: u_m(t)<u_j(t)\leq \hat a\right\} \right)
= \sum_{{\rm even}\,j\ge\jnex(t)}\mu_j
\,.
\label{d:hatepsr}
\end{align}
\begin{remark} 
The relation with $\eps_l,\eps_r$ is as follows. Let us denote the part of $\Omega_j$ outside $\bar\Phi_m$ by
\[
\nu_{j}(t) = \nu(\{x\in\Omega_{j}:u(x,t)<\hat b \text{ or } u(x,t)>\hat a\}),
\]
and note 
$0\le \nu_{j}(t)\le\theta \mu_{j}$ for $j\geq \jnex(t)$ because 
$u_{j}\in[\hat b,\hat a]$.  Then 
\begin{equation}
\begin{array}{lll}
j=\jnex(t) \text{ odd}\implies 
& \eps_l = \hat\eps_l-\nu_{\jnex}\,, &\eps_r=\hat\eps_r-\nu_{\jnex+1}\,,\\
j=\jnex(t) \text{ even}\implies 
&\eps_r = \hat\eps_r - \nu_{\jnex} \,, 
& \eps_l = \hat\eps_l -\nu_{\jnex+1} \,.
\end{array}
\end{equation}
\end{remark}
Recall that 
\begin{equation}
\bar u  = 
\mu_l u_l + \mu_m u_m + \mu_r u_r + 
\sum_{j\ge0} 
\left( (1-\theta)\mu_j u_j + \int_{\tilde\Omega_j}u \right) \,.
\end{equation}
Since $\nu(\tilde\Omega_j)=\theta\mu_j$, for any constant $v\in\R$ we can write  
%\[ (1-\theta_j)\mu_j u_j + \int_{\tilde\Omega_j}u = \mu_j v + H_j(v) \,, \]
\[ (1-\theta)\mu_j u_j + \int_{\tilde\Omega_j}u 
=  (1-\theta)\mu_j (u_j-v) 
+ \int_{\tilde\Omega_j} (u-v)\,d\nu 
 +\mu_j v   \,.
\]
In view of \eqref{e:mu_lmr_cubic} and \eqref{d:hatepsl}--\eqref{d:hatepsr} 
then, we find that
\begin{equation}
\bar u = 
\ \  u_l(\hat\mu_l - \hat\eps_l) 
+ u_m(\hat\mu_m + \hat\eps_l+\hat\eps_r) 
+ u_r(\hat\mu_r - \hat\eps_r) 
+ H(t)  \,,
\end{equation}
where $H(t) = H_l(t) +H_m(t) +H_r(t)$, with %
\begin{align}
H_l(t) &=  \sum_{{\rm odd}\, j< \jnex(t)} 
\left(
(1-\theta)\mu_j (u_j-u_l) + \int_{\tilde\Omega_j} (u(x,t)-u_l)\,d\nu(x)  
\right)\,,
\label{d:Hl}\\ 
H_m(t) &=  \sum_{j\ge\jnex(t)} 
\left(
(1-\theta)\mu_j (u_j-u_m) + \int_{\tilde\Omega_j} (u(x,t)-u_m)\,d\nu(x)  
\right)\,,
\label{d:Hm}\\ 
H_r(t) &=  \sum_{{\rm even}\, j< \jnex(t)} 
\left(
(1-\theta)\mu_j (u_j-u_r) + \int_{\tilde\Omega_j} (u(x,t)-u_r)\,d\nu(x)  
\right)\,.
\label{d:Hr}
\end{align}
Since $\bar u=0$ and by \eqref{e:hatmu_cubic} we obtain our desired relation, 
\begin{equation}\label{eq:ulumur}
\frac13(u_l+u_m+u_r) =
\hat\eps_r(u_r-u_m)-\hat\eps_l(u_m-u_l)
-H(t)\,.
\end{equation}

Now, by using \eqref{eq:ulumur} in the evolution equation
\eqref{eq:Revol1} for the phase ratio $R$, we infer that
\begin{equation}
%\boxed{
\D_t R =   
3\Bigl(\hat\eps_l(u_m-u_l)-\hat\eps_r (u_r-u_m)+ H(t)\Bigr)
    (u_r-u_l) R  \,,
%    }
\label{eq:Revol3}
\end{equation}
which can be compared to equation \eqref{eq:Revol2} for solutions with
three values.  As this comparison suggests, our aim is show that
$ H(t)$ is tiny enough over large enough time intervals 
that non-convergence follows.

\subsubsection{Estimates in the stable phases}

\begin{lemma}[Estimates on $\bar f,u_l,u_r$]\label{lem:bd_ulr}
Let $h_j = \frac13\mu_{j+1}$ for all $j$.
Then whenever $t\le \tau_j$ we have 
\begin{equation}
u_l(t)<\hat b-h_j,
\qquad   u_r(t)>\hat a+h_j,
\qquad f(\hat a+ h_j )<\bar f(t)< f(\hat b- h_j).
%u_l(t)<\hat b-\tfrac13\mu_{j+1},
%\quad   u_r(t)>\hat a+\tfrac13\mu_{j+1},
%\quad f(\hat a+\tfrac13 \mu_{j+1})<\bar f(t)< f(\hat b-\tfrac13\mu_{j+1}).
\end{equation}
\end{lemma}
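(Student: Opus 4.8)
The plan is to prove the three inequalities in the order stated, using the bounds on $u_m$ from \eqref{bd:um} together with preservation of order and conservation of mass. First I would establish the claim $u_r(t)>\hat a+h_j$ when $t\le\tau_j$. By preservation of order, for $t\le\tau_j$ all the sets $u(\Omega_k,t)$ with $k$ even and $k\ge j+2$ lie in the right phase $\Phi_r$, and moreover $u(\Omega_{j+2k},t)>u(\Omega_{j},t)\ge\hat a$; one wants to push $u_r$ (the value on $\Omega_r$) strictly above $\hat a$ by the amount $h_j=\tfrac13\mu_{j+1}$. The mechanism is the same conservation-of-mass estimate used in Lemma~\ref{lem:um_bd;tk;cubic}: write $0=\bar u$ as a sum over phases, bound the middle- and left-phase contributions using $u_m(t)<\hat a-\mu_{j+1}$ (from \eqref{bd:um}), $u_l(t)<\hat b$, and $a\le u_l(t)$, and bound the right-phase contribution from above by $u_r(t)$ times its total mass; solving for $u_r(t)$ yields a lower bound of the form $\hat a + (\text{const})\cdot\mu_{j+1}$, and checking the constant exceeds $\tfrac13$ uses only $\hat a=\tfrac1{\sqrt3}$, $\hat b=-\hat a$, $b=2\hat a$, and the smallness of the total transition mass $\sum_{k\ge j}\mu_k\le \mu_j/(1-\eta)$ guaranteed by \eqref{i:eps_lr_trap}-type bounds. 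The bound $u_l(t)<\hat b-h_j$ follows by the symmetric computation, using $u_m(t)>\hat b+\mu_{j+1}$ from \eqref{bd:um} and bounding the left-phase contribution from below by $u_l(t)$ times its total mass.

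Next, the two-sided bound on $\bar f(t)$ I would deduce directly from the first two bounds together with monotonicity of $f$ on $\Phi_r$ and $\Phi_l$. Since $f$ is increasing on $\Phi_r=[\hat a,b]$ and we have just shown $u_r(t)>\hat a+h_j$, the right-phase contribution to $\bar f(t)=\int_\Omega f(u)\,d\nu$ involves values $f(u(x,t))\ge f(\hat a+h_j)$ on a set of measure... — more carefully, I would argue pointwise: every value $u(x,t)$ with $x$ such that $u(x,t)\in\Phi_r$ satisfies $u(x,t)\ge u_r(t)>\hat a+h_j$ wait, that is false since $u_r$ is the \emph{smallest} right-phase value only if $\Omega_r$ is the first right piece. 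Actually by preservation of order $u(x,t)\ge u_r(t)$ for all $x$ in the right phase, so $f(u(x,t))>f(\hat a+h_j)$ there; similarly $f(u(x,t))<f(\hat b-h_j)$ on the left phase, and in the middle phase $f$ takes values between $f(\hat b)$ and $f(\hat a)$, which lie inside $(f(\hat a+h_j),f(\hat b-h_j))$ since $f(\hat a+h_j)<f(\hat a)=-\tfrac2{3\sqrt3}<f(\hat b)<f(\hat b-h_j)$ by monotonicity of $f$ on each of $\Phi_l,\Phi_m,\Phi_r$ wait $f$ is \emph{decreasing} on $\Phi_m$ and on the relevant one-sided neighborhoods of $\hat a,\hat b$ inside the stable phases $f$ is increasing moving away from the critical points. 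So $f(\hat a+h_j)<f(\hat a)$ and $f(\hat b-h_j)>f(\hat b)$, and since $f(\hat a)<0<f(\hat b)$ wait $f(\hat a)=\hat a^3-\hat a=\hat a(\hat a^2-1)=\tfrac1{\sqrt3}\cdot(-\tfrac23)<0$ and $f(\hat b)=-f(\hat a)>0$, indeed all middle-phase values of $f$ lie in $[f(\hat a),f(\hat b)]\subset(f(\hat a+h_j),f(\hat b-h_j))$. Integrating, $f(\hat a+h_j)<\bar f(t)<f(\hat b-h_j)$.

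The main obstacle is the bookkeeping in the conservation-of-mass estimate for $u_r$ and $u_l$: one must carefully split $\Omega$ into $\Omega_l,\Omega_m,\Omega_r$ and the pieces $\Omega_k$, $\tilde\Omega_k$, track which pieces have left $\bar\Phi_m$ by time $t\le\tau_j$ (all $\Omega_k$ with $k\le j-? $... precisely those with $\tau_k<t$, so in particular none with $k\ge j$), and verify that the numerical constant that emerges in front of $\mu_{j+1}$ genuinely exceeds $\tfrac13$ after accounting for the error terms of size $O(\mu_j)$ coming from the not-yet-transitioned pieces and the transition zones $\tilde\Omega_k$ of total measure $\le\theta\mu_0/(1-\eta)$. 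This is exactly the type of estimate already carried out in the proof of Lemma~\ref{lem:um_bd;tk;cubic}, with $\eps_l,\eps_r\le\mu_0/(1-\eta^2)<\tfrac19$ used to control denominators; here the same smallness of $\eta,\mu_0,\theta$ from \eqref{c:small_cubic} gives the needed slack. I expect no conceptual difficulty beyond this arithmetic, and I would present it as two short parallel computations (for $u_r$ and for $u_l$) followed by the one-line deduction of the $\bar f$ bounds by monotonicity of $f$ on each phase.
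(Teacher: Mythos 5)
Your proposal runs in the opposite order from the paper's proof, and unfortunately both of your steps break down. First, conservation of mass cannot deliver the bound $u_r(t)>\hat a+h_j$ (nor $u_l(t)<\hat b-h_j$): the identity $\bar u=0$ constrains $u_l$ and $u_r$ only \emph{jointly}, and the left phase has $O(1)$ slack (its values may sit anywhere in $[a,\hat b]$, an interval of length $1/\sqrt3$). Concretely, the configuration ``left phase near $\hat b$, middle near $\hat a$, right phase at exactly $\hat a$'' is perfectly consistent with $\bar u=0$ with room to spare, since $\tfrac13\hat b+\tfrac13\hat a+\tfrac13\hat a>0$; if you carry out the estimate you sketch, using $u_m<\hat a-\mu_{j+1}$ and the upper bound $\hat b$ on the left phase, the numerator comes out \emph{negative} (the term $-2\hat a\,\eps_l$ with $\eps_l\gtrsim\mu_{j+1}$ swamps $+\tfrac13\mu_{j+1}$), so you do not even recover $u_r\ge\hat a$. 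This is why Lemma~\ref{lem:um_bd;tk;cubic} works only for $u_m$: there the borderline configuration $a\hat\mu_l+\hat a\hat\mu_m+\hat a\hat\mu_r=0$ balances exactly, which has no analogue for $u_l$ or $u_r$. The paper instead obtains the $u_l,u_r$ bounds \emph{dynamically}: they hold at $t=0$ (since $|\bar v_0|\le\mu_0$), and once $\bar f(t)>f(\hat a+h_j)$ is known, the ODE $\D_t u_r=-f(u_r)+\bar f(t)$ together with monotonicity of $f$ on $\Phi_r$ makes $\hat a+h_j$ a barrier that $u_r$ cannot cross from above.

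Second, your deduction of the $\bar f$ bounds has the monotonicity directions reversed. Since $f$ is \emph{increasing} on $\Phi_r=[\hat a,b]$ and $\hat a$ is a local \emph{minimum} of $f$, we have $f(\hat a+h_j)>f(\hat a)$, and likewise $f(\hat b-h_j)<f(\hat b)$; so the claimed interval $(f(\hat a+h_j),f(\hat b-h_j))$ is a \emph{proper sub-interval} of the trivial range $[f(\hat a),f(\hat b)]=[\min_{[a,b]}f,\max_{[a,b]}f]$ (recall $f(a)=f(\hat a)$ and $f(b)=f(\hat b)$). No pointwise bound of the form $f(u(x,t))\in[f(\hat a),f(\hat b)]$ can therefore yield the conclusion, and in any case recently transitioned pieces sit arbitrarily close to $\hat a$ inside $\Phi_r$, so their $f$-values are below $f(\hat a+h_j)$. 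The missing idea is that a mass of at least $\hat\mu_m=\tfrac13$ sits at $u_m(t)$, which by \eqref{bd:um} is at distance at least $\mu_{j+1}$ from the critical points; since $f'(\hat a)=f'(\hat b)=0$, Taylor expansion converts this into an excess $f(u_m)-f(\hat a)>\mu_{j+1}^2$, and hence $\bar f(t)>f(\hat a)+\tfrac13\mu_{j+1}^2(1+O(\mu_{j+1}))$, which beats $f(\hat a+\tfrac13\mu_{j+1})-f(\hat a)\approx 3\hat a(\tfrac13\mu_{j+1})^2$. That quadratic bookkeeping at the degenerate critical points, not the location of $u_l$ and $u_r$, is what drives the $\bar f$ estimate, and the $\bar f$ estimate must come first.
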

\begin{proof}
We prove the bounds on $\bar f(t)$ first. Observe
\begin{align*}
\bar f(t) &> f(a)(\hat\mu_l-\eps_l) + f(u_m)(\hat\mu_l+\eps_l) + f(\hat a)\hat\mu_r
\\ &= f(\hat a) + (f(u_m)-f(\hat a))(\hat\mu_l+\eps_l)
\\ &> f(\hat a) + (f(\hat a - \mu_{j+1}) - f(\hat a))(\hat\mu_r + \mu_{j+1}) \,,
\end{align*}
\begin{align*}
\bar f(t) &< f(\hat b)\hat\mu_l + f(u_m)(\hat\mu_m+\eps_r) + f(b)(\hat\mu_r-\eps_r)
\\ &= f(\hat b) + (f(u_m)-f(\hat b)) (\hat\mu_m+\eps_r)
\\ &\le f(\hat b) + (f(\hat b+\mu_{j+1})-f(\hat b))(\hat\mu_m +\mu_{j+1}) \,.
\end{align*}
Since $0=f'(\hat b)=f'(\hat a)$, Taylor expansion gives, for $0<h<1/\sqrt3 =\hat a=-\hat b$,
\begin{align*}
f(\hat a - h) - f(\hat a) &= 3\hat a h^2 - h^3 >  2\hat a h^2 >  h^2,
\\ 
f(\hat b+h) - f(\hat b) &= 3 \hat b h^2+ h^3 <  - 2\hat a h^2 < -h^2, 
\end{align*}
Hence 
\begin{align}
\bar f(t) &> f(\hat a) + \mu_{j+1}^2 (\tfrac13+\mu_{j+1}) 
> f(\hat a + \tfrac13\mu_{j+1})\,,
%> f(\hat a + \tfrac13\mu_{j+1}) \,,
\\
\bar f(t) &< f(\hat b) - \mu_{j+1}^2 (\tfrac13+\mu_{j+1})
< f(\hat b - \tfrac13\mu_{j+1}) \,.
%<  f(\hat b - \frac13\mu_{j+1})\,.
\end{align}
This proves the claimed bounds on $\bar f$.

Note that  initially $u_l(0)=-1-\bar v_0<\hat b-h_0$ and $u_r(0)=1-\hat v_0>\hat a+h_0$, 
since $|\bar v_0|\le\mu_0\le\frac1{10}$ and $h_0\le \frac1{240}$.
Then the claimed bounds on  $u_l$ and $u_r$ follow from the bounds on $\bar f$,
the evolution equation \eqref{eq:ODE_nonlocal},
and the monotonicity of $f$ on the invariant intervals $[a,\hat b]$ and $[\hat a,b]$.
\end{proof}

\begin{lemma}\label{lem:bd_urjl}
%    For each $k\ge0$ let $h_k=\frac13\mu_{k+1}$. Then
    Whenever $\tau_j<t<\tau_{k}$ we have:
    \begin{align*}
     u_r-u_j\le  e^{-h_k(t-\tau_j)} \quad\text{for $j,k$ even} ,
\qquad     
    u_j-u_l\le  e^{-h_k(t-\tau_j)} \quad\text{for $j,k$ odd}.
    \end{align*}
\end{lemma}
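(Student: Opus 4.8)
The plan is to handle the even and odd cases in parallel and reduce the estimate to a one-line Gronwall argument, feeding in two facts already established: the pointwise lower bound $u_r(s)>\hat a+h_k$ valid up to time $\tau_k$ from Lemma~\ref{lem:bd_ulr}, and the one-sided lower bound \eqref{bd:f_ge} on difference quotients of $f$ inside the stable phases.

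First I would observe that for $j,k$ even and $s\in(\tau_j,\tau_k)$, the value $u_j(s)$ has already crossed into the right stable phase $\Phi_r$ --- since $j$ is even, $u(x,\cdot)$ for $x\in\Omega_j\setminus\tilde\Omega_j$ can only exit $\bar\Phi_m$ through $\hat a$, and $\Phi_r$ is pointwise stable by Lemma~\ref{lem:invar_rgn} --- while $u_r(s)\in\Phi_r$ for all $s$, and $u_r(s)>u_j(s)$ by preservation of order (Lemma~\ref{lem:monotone}), because $u_r(0)=1-\bar v_0>\alpha_j-\bar v_0=u_j(0)$. The crucial point is that $\tau_k$ has not yet occurred: applying Lemma~\ref{lem:bd_ulr} with its index equal to $k$ gives $u_r(s)>\hat a+h_k$ for every $s<\tau_k$.

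Next, since $u_j$ and $u_r$ each satisfy the scalar equation $\D_s u=-f(u)+\bar f(s)$, their difference obeys $\D_s(u_r-u_j)=-(f(u_r)-f(u_j))$. Both arguments lie in $\Phi_r$ with $u_r(s)-\hat a>h_k$, so \eqref{bd:f_ge} yields $f(u_r)-f(u_j)\ge h_k(u_r-u_j)$ (using that $f$ is increasing on $\Phi_r$, so the difference quotient is positive). Hence $\D_s(u_r-u_j)\le -h_k(u_r-u_j)$, and Gronwall's inequality on $[\tau_j,t]$ gives $u_r(t)-u_j(t)\le\bigl(u_r(\tau_j)-u_j(\tau_j)\bigr)e^{-h_k(t-\tau_j)}$. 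Since $u_j(\tau_j)=\hat a$ and $u_r(\tau_j)\le b$, the prefactor is at most $b-\hat a=1/\sqrt3<1$, which is exactly the claimed bound. The odd case is identical after replacing $(\Phi_r,u_r,\hat a,b)$ by $(\Phi_l,u_l,\hat b,a)$, using the corresponding half of Lemma~\ref{lem:bd_ulr} and of \eqref{bd:f_ge}, and noting that $f$ is increasing on $\Phi_l$ as well; the prefactor there is $\hat b-a=1/\sqrt3<1$.

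I do not expect a genuine obstacle; the one thing to get right is the index bookkeeping. The contraction rate is $h_k=\tfrac13\mu_{k+1}$, dictated by the \emph{later} transition time, and it is precisely the validity of $u_r>\hat a+h_k$ all the way up to $s=\tau_k$ (not merely up to $\tau_j$) that licenses the exponent $h_k$ throughout the interval $(\tau_j,\tau_k)$; one should also check that the prefactor $b-\hat a$ (resp. $\hat b-a$) is strictly below $1$ so that the trivial bound at $t=\tau_j$ is absorbed.
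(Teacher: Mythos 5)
Your proof is correct and follows essentially the same route as the paper: both use the lower bound $u_r>\hat a+h_k$ (resp.\ $u_l<\hat b-h_k$) from Lemma~\ref{lem:bd_ulr} to get the difference-quotient estimate $\frac{f(u_r)-f(u_j)}{u_r-u_j}>h_k$, then apply Gronwall and absorb the prefactor $b-\hat a=\hat b-a=1/\sqrt3<1$. The only cosmetic difference is that you cite \eqref{bd:f_ge} where the paper recomputes the same quotient bound inline.
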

\begin{proof} 
Suppose $\tau_j<t<\tau_{k}$. In case $j,k$ are both even,
we know $u_r\ge \hat a+ h_k$ by Lemma~\ref{lem:bd_ulr}, so
\[
\frac{f(u_r)-f(u_j)}{u_r-u_j} \ge  \frac{f(\hat a+h_k)-f(\hat a)}{h_k}
= 3\hat a h_k+h_k^2 > h_k \,.
\]
It follows
\begin{equation*}
\D_t(u_r-u_j) = -(u_r-u_j)
\frac{f(u_r)-f(u_j)}{u_r-u_j} < -h_k (u_r-u_j) \,,
\end{equation*}
hence
\begin{equation}
u_r(t)-u_j(t) \le 
(u_r(\tau_j)-u_j(\tau_j)) e^{-h_k(t-\tau_j)}
\le (b-\hat a) e^{-h_k(t-\tau_j)}
<  e^{-h_k(t-\tau_j)}\,.
\end{equation}

In case $j,k$ are odd, we know $u_l\le \hat b- h_k$, hence
\[
\frac{f(u_j)-f(u_l)}{u_j-u_l} \ge  
\frac{f(\hat b)-f(\hat b-h_k)}{h_k}
= -3\hat b h_k+h_k^2 > h_k.
\]
It follows
\begin{equation*}
\D_t(u_j-u_l) = -(u_j-u_l)
\frac{f(u_j)-f(u_l)}{u_j-u_l} < -h_k (u_j-u_l) \,,
\end{equation*}
thus
\begin{equation}
u_j(t)-u_l(t) \le 
(u_j(\tau_j)-u_l(\tau_j)) e^{-h_k(t-\tau_j)}
\le (\hat b- a) e^{-h_k(t-\tau_j)}
<  e^{-h_k(t-\tau_j)} \,.
\end{equation}
\end{proof}

\begin{lemma}[Bounds for $ H(t)$]\label{lem:Hbd}
    %{Assume that $\theta_j \le \alpha_j$ for all $j\ge k$. } 
    %Let $h_k=\frac13\mu_{k+1}$, 
    For $0\le \tau_{k-1}<t<\tau_k$  we have 
    \begin{equation*}
    %    |H_m(t)| \le 4\mu_k \alpha_k e^t \,, 
        |H_m(t)| \le 2\mu_k (\alpha_k e^t + \theta) \,, 
        \qquad
        |H_l(t)+H_r(t)| \le  2\mu_0 e^{-h_k(t-\tau_{k-1})} \,.   
  \end{equation*}
\end{lemma}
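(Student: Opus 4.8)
The plan is to estimate the three sums $H_m$, $H_l$, $H_r$ separately, using in each case the triangle inequality, the bound $\nu(\tilde\Omega_j)=\theta\mu_j$ on the transition-zone measures, and the exponential growth/contraction estimates already established. For the middle sum $H_m(t)=\sum_{j\ge \jnex(t)}\bigl((1-\theta)\mu_j(u_j-u_m)+\int_{\tilde\Omega_j}(u-u_m)\bigr)$, the key point is that for $j\ge\jnex(t)$ (so $t\le\tau_j$) the value $u_j(t)$ is still in the unstable phase $\bar\Phi_m$, where $|f'|\le 1$, so Gronwall gives $|u_j(t)-u_m(t)|\le \alpha_j e^{t}$ (starting from $|u_j(0)-u_m(0)|=\alpha_j-|\bar v_0|\le\alpha_j$ — or more simply using the crude bound $\alpha_j e^t$). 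Likewise, for $x\in\tilde\Omega_j$ we have by \eqref{e:tilde_order} that $|u(x,0)-u_j(0)|\le \alpha_{j-2}$, but since $u(x,t)$ and $u_m(t)$ both lie in $\bar\Phi_m$ the difference $|u(x,t)-u_m(t)|$ is at most the diameter $\hat a-\hat b<2$ of that interval; combined with $\nu(\tilde\Omega_j)=\theta\mu_j$ and the $\mu_j=\mu_0\eta^j$ geometric decay, $\sum_{j\ge\jnex}\mu_j=\mu_{\jnex}/(1-\eta)$ and $\jnex=k$ when $\tau_{k-1}<t<\tau_k$. Summing, $|H_m(t)|\le \sum_{j\ge k}\mu_j\alpha_j e^t + 2\theta\sum_{j\ge k}\mu_j \le \frac{\mu_k}{1-\eta}(\alpha_k e^t + 2\theta)$; absorbing the constant $\tfrac1{1-\eta}\le 2$ (as $\eta\le\tfrac18$) into a factor $2$ and using $\alpha_k$ decreasing gives the stated $|H_m(t)|\le 2\mu_k(\alpha_k e^t+\theta)$ (up to adjusting constants; one may have to be slightly more careful but the structure is clear).

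For $H_l(t)=\sum_{\mathrm{odd}\,j<\jnex(t)}\bigl((1-\theta)\mu_j(u_j-u_l)+\int_{\tilde\Omega_j}(u-u_l)\bigr)$, the indices $j<\jnex(t)=k$ are precisely those that have already transitioned, i.e.\ $\tau_j<t$; since these $j$ are odd they have transitioned into the left phase $\Phi_l$, so $u_j(t)$ and $u_l(t)$ both lie in $\Phi_l$, where the contraction estimate of Lemma~\ref{lem:bd_urjl} applies: with $\tau_j<t<\tau_k$, $|u_j(t)-u_l(t)|\le e^{-h_k(t-\tau_j)}$. Because $\tau_j\le\tau_{k-1}$ for $j\le k-1$, this is $\le e^{-h_k(t-\tau_{k-1})}$ for every relevant $j$. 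The transition-zone contributions are handled the same way (points of $\tilde\Omega_j$ with $j<\jnex$ odd have also moved into $\Phi_l$, again by \eqref{e:tilde_order} and preservation of order, so $|u(x,t)-u_l(t)|\le e^{-h_k(t-\tau_{k-1})}$ as well — or one may bound them by the contraction applied with a slightly different starting time, but the same exponential rate). Summing over odd $j<k$ with weights $(1-\theta)\mu_j+\theta\mu_j=\mu_j$ gives $|H_l(t)|\le e^{-h_k(t-\tau_{k-1})}\sum_{\mathrm{odd}\,j<k}\mu_j\le \mu_0 e^{-h_k(t-\tau_{k-1})}$; the identical argument with ``odd/left'' replaced by ``even/right'' gives the same bound for $|H_r(t)|$, and adding yields $|H_l(t)+H_r(t)|\le 2\mu_0 e^{-h_k(t-\tau_{k-1})}$.

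The main obstacle is getting the exponential factors to line up with a common rate and a common reference time across all the already-transitioned indices $j<k$: one must check that the contraction rate $h_k=\tfrac13\mu_{k+1}$ supplied by Lemma~\ref{lem:bd_urjl} (which requires $u_r\ge\hat a+h_k$, valid for $t\le\tau_k$ by Lemma~\ref{lem:bd_ulr}) can be used uniformly for every $j\le k-1$, and that replacing $e^{-h_k(t-\tau_j)}$ by $e^{-h_k(t-\tau_{k-1})}$ is legitimate because $\tau_j\le\tau_{k-1}$ — which is exactly where part (i) of Theorem~\ref{thm:cubic}, giving $\tau_{j+1}>\tau_j$, is essential. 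A secondary subtlety is treating the transition zones $\tilde\Omega_j$ for the two ``straddling'' indices $j=\jnex,\jnex+1$, whose contributions land entirely in $H_m$ (not in $H_l$ or $H_r$) by the way $\jnex$ was defined; for these, the crude bound ``difference of two points of $\bar\Phi_m$ is $\le\hat a-\hat b$'' together with $\nu(\tilde\Omega_j)\le\theta\mu_j$ suffices and is already folded into the $\theta$ term above. Beyond these bookkeeping points the estimates are routine applications of Gronwall and geometric-series summation.
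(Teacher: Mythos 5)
Your proposal follows essentially the same route as the paper: Gronwall with $|f'|\le 1$ in $\Phi_m$ for $H_m$, the contraction estimate of Lemma~\ref{lem:bd_urjl} together with $\tau_j\le\tau_{k-1}$ for $H_l+H_r$, and geometric summation of the $\mu_j$. The one inaccuracy is your crude bound on the transition-zone terms: for the straddling indices $j=\jnex,\jnex+1$ the points of $\tilde\Omega_j$ need \emph{not} lie in $\bar\Phi_m$ (they sit between $u_{j-2}(t)$, which has already exited, and $u_j(t)$), so the premise behind the bound $\hat a-\hat b$ is false; the correct elementary bound is $|u-u_m|\le b-\hat b=\sqrt3$ (one-sidedness plus invariance of $[a,b]$), and it is precisely this $\sqrt3<2(1-\eta)$ that lets the final constant close as $2\mu_k(\alpha_ke^t+\theta)$, whereas your fallback constant $2$ would not.
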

\begin{proof}
    For the given range of $t$ we have  $j_m(t)=k$. 
    To prove the bound on $H_m(t)$ defined by \eqref{d:Hm}, 
    we use the bound $|f'|\le 1$ in $\Phi_m$ to infer $|u_m-u_j|\le \alpha_je^t$
    for all $j\ge k$, and the bound
    $|u-u_m(t)|<b-\hat b=\sqrt3$ in $\tilde\Omega_j$. 
    Then since $\eta\le\frac18$ we infer
    \begin{equation}
        |H_m(t)|\le \sum_{j\ge k}
        \bigl( (1-\theta)\mu_j \alpha_j e^t + \theta\mu_j(b-\hat b) \bigr) 
        \le  
        \frac{\mu_k}{1-\eta}  (\alpha_ke^t + \sqrt3 \theta) 
        \le 2\mu_k( \alpha_k e^t+\theta)\,.
    \end{equation}
   % \fix{due to $\frac87(1+\sqrt 3)<\frac{24}7< 4$} 
    By Lemma~\ref{lem:bd_urjl} we find
    \begin{equation*}
        |H_l(t)+H_r(t)| \le 
        \sum_{j<k} \mu_j e^{-h_k(t-\tau_j)} \le 2\mu_0 e^{-h_k(t-\tau_{k-1})} \,.
        \qedhere
    \end{equation*}
\end{proof}

Note that  $H_m(t)$ can be kept small for any specified time by forcing the $\alpha_k$ to 
decay faster, whereas  the exponential contraction in the stable phase will force 
$H_l(t),H_r(t)$ to be small for $t-\tau_{k-1}$ large enough.
We will see that smallness of $H(t)$ implies lower bounds on the drift of the phase ratio 
$R$ in \eqref{eq:Revol3}, leading to non-convergence.

%%-------------------------
\subsubsection{Proof of non-convergence}

In this subsection we complete the proof of part (ii) of Theorem~\ref{thm:cubic}. 
For use below, recall $h_k=\frac13\mu_{k+1}$, and
note that $\kappa_k$ satisfies
%for $t\ge \tau_{k-1}+\kappa_k$,
%\begin{equation}
%  2\mu_0 e^{-h_k(t-\tau_{k-1})}   \le \frac18\mu_k \,,
%\end{equation}
%and 
\begin{equation}\label{bds:kappa}
     \kappa_{k}=\frac{1}{h_k}\log\left(\frac{6}h_k\right) ,\qquad
h_k e^{\mu_k\kappa_k} = 6\,,
\qquad
  2\mu_0 e^{-h_k\kappa_k}   \le \frac19\mu_{k+1}  \,.
\end{equation}

\begin{proof}[Proof of Theorem \ref{thm:cubic} part (ii)]
1. We argue by contradiction.
Supposing that $\lim_{t\to\infty} u(\cdot,t)$ exists, there is some 
   $\hat s\in[f(a),f(b)]$ such that  as $t\to\infty$, 
   \[
   \bar f(t)\to \hat s \quad\text{and} \quad u_j(t)\to z_j(\hat s) \quad\text{for $j=l,m,r$}.
  \]
We will consider the cases $\hat s\le0$ and $\hat s>0$ separately. 
First consider the case $\hat s\le 0$. Then 
necessarily $z_l(\hat s)\le -1$  and $0\le z_m(\hat s)\le z_r(\hat s)\le1$, 
and as $t\to\infty$ we have 
\begin{equation}\label{e:Rlim_le0}
R(t)\to \hat R:= \frac{z_r(\hat s)-z_m(\hat s)}{z_m(\hat s)-z_l(\hat s)} \in[0,1].
\end{equation}
In particular, if $T$ is large enough, then for all $t>T$ we have 
%$u_m(t)\ge -\tfrac12\eta^2$ and $R(t)<2$,
\begin{equation}\label{bd:R<2}
u_l(t) \le -1+\tfrac12\eta^2, \quad 
u_m(t) \ge -\tfrac12\eta^2, \quad\text{and}\quad
%u_r \le 1+\tfrac12\eta^2, \qquad
R(t) < 2\,.
%= \frac{u_r-u_m}{u_m-u_l} \le \frac{1+\eta^2}{1-\eta^2}.
 \end{equation}
We will contradict the last conclusion by showing that for any sufficiently large odd $k$, 
necessarily $R(t_k)\geq 12$ for some $t_k\in (\tau_{k-1},\tau_k)$.

2. We claim that for any sufficiently large odd $k$ with $\tau_{k-1}>T$, 
%\fix{such that the assumption holds...}
\begin{equation}\label{bd:tauk}
\tau_k>\tau_{k-1}+2\kappa_k \,.
\end{equation}
%\[ e^{\tau_{k-1}} \le \left( \frac{2}{\alpha_{k-1}}\right)^{1/\lambda_k}, 
%\qquad
%e^{\tau_k} \le \left( \frac{2}{\alpha_{k}}\right)^{1/\lambda_{k+1}}, 
%\]
%and therefore,  for all $t<\tau_{k-1}+2\kappa_k$, 
Indeed, since $k$ is odd and $\eta\leq \frac18$, 
\[
\alpha_ke^{\tau_k}= u_m(\tau_k)-u_k(\tau_k)= u_m(\tau_k)-\hat b >
-\frac12\eta^2 +\frac1{\sqrt 3} > \frac14 \,.
\]
But in light of \eqref{eq:tauk_ubd} and the condition \eqref{c:alphak_cubic}, 
we get that for all $t\le\tau_{k-1}+2\kappa_k$, 
\begin{equation} \label{bd:alphaket}
\alpha_k e^t 
\le 
\alpha_k   \left( \frac{2}{\alpha_{k-1}}\right)^{1/\mu_k} e^{2\kappa_k}
\le \frac{1}{24}  \,.
\end{equation}
Thus \eqref{bd:tauk} holds.

3. For $k$ odd and $T<\tau_{k-1}<t<\tau_k$, we have
\[ 
j_m(t)=k, \qquad \hat\eps_l(t) = \frac{\mu_k}{1-\eta^2}, \qquad\hat \eps_r(t) = \eta\hat\eps_l,
\]
and equation \eqref{eq:Revol3} takes the form
\begin{equation}\label{eq:dtR_kodd}
\D_t R = 3\mu_k \left( (1-\eta R) \frac{u_m-u_l}{1-\eta^2}
+ \frac{H(t)}{\mu_k} \right)(u_r-u_l)R \,.
\end{equation}
%3.  Let us collect estimates valid for  $T<\tau_{k-1}<t<\tau_k$ and $k$ odd.  
%Since $\hat s\le0$,  by taking $T$ large enough we  can suppose
Now we can deduce from Lemma~\ref{lem:Hbd}, \eqref{c:small_cubic}, \eqref{bd:alphaket} and \eqref{bds:kappa} that for $\tau_{k-1}+\kappa_k< t<\tau_{k-1}+2\kappa_k$, 
\begin{equation}\label{bd:H_mlr}
\frac{|H_m(t)|}{\mu_k} \le  2(\alpha_ke^t + \theta)
\le \frac16 \,, \qquad
\frac{|H_l(t)+H_r(t)|}{\mu_k} \le 
\frac{2\mu_0 e^{-h_k\kappa_k}}{\mu_k} \le \frac19 \,.
\end{equation}
%From these estimates \fix{and the fact that $\tau_k\ge \tau_{k-1}+2\kappa_k$} 
It follows from~\eqref{bd:R<2},
\eqref{eq:dtR_kodd}, the fact $u_r-u_l>\hat a -\hat b>1$ 
and $\eta R \le\frac14$ that for all $t$ in this range,
\begin{equation}
\D_t R \ge \mu_k R \,. 
\end{equation}
Using \eqref{bd:um} and Lemma~\ref{lem:bd_ulr} we can ensure
$u_r-u_m \ge 4 h_k$, hence for $t=\tau_{k-1}+\kappa_k$, 
\[
R(\tau_{k-1}+\kappa_k)= \frac{u_r-u_m}{u_m-u_l}
\ge \frac{4h_k}{\hat a -a}\ge 2h_k \,.
\]
Using \eqref{bds:kappa}, we infer that at time $t_k:=\tau_{k-1}+2\kappa_k$,
\begin{equation}
%    R(\tau_{k-1}+2\kappa_k) 
    R(t_k) \ge 2h_k e^{\mu_k\kappa_k} \ge 12\,.
\end{equation}
This  contradicts $R(t)<2$ for all $t>T$, and concludes the analysis in the case $\hat s\le0$.

4. The treatment in the case $\hat s>0$ is broadly similar. 
In this case, we can say that 
\begin{equation}\label{e:Rinvlim_le0}
R(t)\inv\to \check R:= \frac{z_m(\hat s)-z_l(\hat s)}{z_r(\hat s)-z_m(\hat s)} \in[0,1],
\end{equation}
and find $T$ large enough so that for all $t>T$,
\begin{equation}
    u_r(t)> 1-\tfrac12\eta^2, \quad u_m(t) \le \tfrac12\eta^2, \quad\text{and}\quad
    R(t)\inv <2.
\end{equation}
Now taking $k$ even and sufficiently large, such that $\tau_{k-1}>T$,
\[
\alpha_ke^\tau_k = u_k(\tau_k)-u_m(\tau_k) > \hat a -\frac12\eta^2 > \frac14 ,
\]
while \eqref{bd:alphaket}, and hence \eqref{bd:tauk}, follow as before.
For $k$ even and $T<\tau_{k-1}<t<\tau_k$, 
\[ 
j_m(t)=k, 
\qquad\hat \eps_l(t) = \eta\hat\eps_r,
\qquad \hat\eps_r(t) = \frac{\mu_k}{1-\eta^2}, 
\]
and we find \eqref{eq:Revol3} equivalent to
\begin{equation}
\D_t R\inv = 
3\mu_k \left( (1-\eta R\inv) 
\frac{u_r-u_m}{1-\eta^2}
- \frac{H(t)}{\mu_k} \right)(u_r-u_l) R\inv
\end{equation}
As before,  for $\tau_{k-1}+\kappa_k< t<\tau_{k-1}+2\kappa_k$, 
the bounds \eqref{bd:H_mlr} hold, and we can infer
\begin{equation}
    \D_t R\inv \ge \mu_k R\inv
\end{equation}
for all $t$ in this interval. At the time $t=\tau_{k-1}+\kappa_k$ we have
\begin{equation}
R(\tau_{k-1}+\kappa_k)\inv= \frac{u_m-u_l}{u_r-u_m}
\ge \frac{4h_k}{b - \hat b}\ge 2h_k \,,
\end{equation}
and infer $R(\tau_{k-1}+2\kappa_k)\inv\ge 12$ like before, obtaining a contradiction.
This finishes the proof.
\end{proof}

    \subsection{Unstable nature of non-convergence}\label{ssec:unstable_counterex}
    
    Now we present a proof of Proposition \ref{prop:unstable_counterex}.
    
    \begin{proof}[Proof of Proposition \ref{prop:unstable_counterex}]
    1. Suppose $u$ does not converge in $L^2(\Omega)$ as $t\to\infty$. As $f=u^3-u$ is not constant on any open interval, 
we deduce $\bar f(t)$ does not converge either; see \cite[Lemma 3.4]{Novick-CohenPego91}. Hence we may choose
an open interval $J$  such that 
\begin{equation}\label{d:J}
\liminf \bar f(t) < \inf J < \sup J <  \limsup \bar f(t) \,,
\end{equation}
and such that $\bar J$ contains only regular values of $f$, omitting both critical values $f(a)$ and $f(b)$.  
Moreover, due to the fact from \eqref{limdt2}
that $\|\D_tu\|_{L^2}\to0$ as $t\to\infty$, we infer by differentiating \eqref{d:fbar} that the Lipschitz function $\bar f(t)$ has derivative
$\D_t\bar f(t)\to0$ as $t\to\infty$ in its set of differentiability. 
If we let $J_m$ denote the ``middle third'' of $J$, then it follows there exist sequences $T_k\to\infty$ and $\tau_k\to\infty$
such that 
\begin{equation}
    \bar f(t)\in J_m \quad\text{ for all $t\in I_k:=[T_k,T_k+\tau_k]$ and all $k\in\N$.}
\end{equation}
Letting $\delta=|J_m|$ denote the length of $J_m$, we have $|s-\hat s|\ge\delta$
whenever $s\in J_m$ and $\hat s\notin J$.

2. We first dispose of the possibility that  $\sup J<f(a)$ or $f(b)<\inf J$.
In this case $f$  has a unique local inverse $z(s)$ defined for $s\in J$ satisfying $f(z(s))=s$,
and $f$ is strictly monotone increasing on $z(J)$. 
Thus the interval $z(J)$ is pointwise stable during each interval $I_k$ 
(cf.~Lemma~\ref{lem:invar_rgn}), for when $\bar f(t)\in J_m$, 
we have 
\begin{equation}\label{e:ddrift}
\text{$-f(v)+\bar f(t)\le -\delta<0$ \quad if $v\ge\sup z(J)$, 
\quad and \quad $-f(v)+\bar f(t)\ge \delta>0$ \quad if $v\le \inf z(J)$.}
\end{equation}
Moreover, whenever $k$ is so large that $\tau_k=|I_k|\ge \hat\tau:= 2M/\delta$ where $M:=\sup|u|+1$,
then 
\begin{equation}\label{e:inzj}
u(x,T_k+\hat\tau)\in z(J) \quad\text{ for all $x\in\Omega$. }
\end{equation}
The reason is that  $|u(x,T_k)|\le M$,
and the quantity $u(x,t)$, if not initially in $z(J)$, 
must monotonically move toward it with speed exceeding $\delta$, by \eqref{e:ddrift}. 
Since $\hat\tau\delta=2M$,
$u(x,t)$ must enter $z(J)$ before time $T_k+\hat\tau$, and cannot escape as long as $t\in I_k$.

But now, since \eqref{e:inzj} holds, Lemma~\ref{lem:invar_rgn} implies the interval $z(J)$
becomes positively invariant and therefore $u(x,t)\in z(J)$ for all large $t$. 
This forces $\bar f(t)\in J$ ever after, contradicting the choice of $J$ in \eqref{d:J}.
By consequence we must have 
\[
J\subset \hat J = (f(a),f(b)). 
\]
In particular, $a<f\inv(J)<b$, i.e., $a<v<b$ whenever $f(v)\in J$.

3.  By the invariance arguments of Lemma~\ref{lem:invar_rgn}, %the interval $[-M,M]$ is positively invariant
 the phase intervals $[-M,\hat b]$ and $[\hat a,M]$ are pointwise stable
 during the intervals $I_k$ when $\bar f(t)\in J_m$.  Supposing $k$ is so large that $\tau_k>\hat\tau$,
 for a similar reason as in step 2 it follows that 
 if $u(x,T_k)\in [-M,\hat b]$ then $u(x,T_k+\hat\tau)\in z_l(J)$,
 and if  $u(x,T_k)\in [\hat a,M]$ then $u(x,T_k+\hat\tau)\in z_r(J)$.
 In particular this implies that there exists some $T_*=T_k+\hat\tau$ such that $u(x,T_*)\in[a,b]$ for all $x$.

For $t>T_*$, $[a,b]$ is positively invariant and the phase intervals $\Phi_l$ and $\Phi_r$ are pointwise stable.

Then the sets defined for $t>T_*$ by 
\[
  \Omega_j(t)=\{x:\,u(x,t)\in \Phi_j\} \quad\text{ for } j=l,m,r,
\]
are monotonic for $t>T_*$. Indeed, the set $\Omega_m(t)$ decreases in time whereas the sets $\Omega_l(t)$ and $\Omega_r(t)$ increase in time. 
Thus, for each $j=l,m,r$, the quantities
\[
  \Omega_j^\infty=\lim_{t\uparrow \infty}\Omega_j(t) \quad\text{ and }\quad
   \mu_j^\infty=\nu(\Omega_j^\infty)
\]
exist.  Let $\check\Omega(t)=\Omega_m(t)\setminus\Omega_m^\infty$ denote the ``bad set'' where
$u(x,t)$ is not in the phase it eventually enters.
%Thus the  discrepancy $\beta(t) = \sum_j |\nu(\Omega_j(t))-\mu_j^\infty|\to0$ as $t\to\infty$.

4.  We next claim that 
\begin{equation}\label{e:zsum}
\sum_j\mu_j^\infty z_j(s)=\bar u
\quad\text{for all $s\in J$.} %\fix{maybe $J$?}}
\end{equation}
Fix $s\in J$, and define $\phi(x)=z_j(s)$ for $x\in\Omega_j^\infty$, \blue$j=l,m,r$,
so that\nc $f(\phi(x))=s$ for all $x$ and $\bar\phi = \sum_j \mu_j^\infty z_j(s)$.
Note that for some $\beta>0$ we have 
\[
|v-z_j(s)|\le \beta|f(v)-s| \quad\text{for all $v\in\Phi_j$, $j=l,m,r$,}
\]
hence for $t>T_*$,
\[
|u(x,t)-\phi(x)|\le \beta |f(u(x,t))-s| \quad\text{for all $x\notin \check\Omega(t)$.}
\]
Taking $t$ along any sequence $t_k\to\infty$ such that $\bar f(t_k)=s$ and $t_k>T_*$, we deduce
that 
\begin{align*}
|\bar u-\bar\phi|^2\le
\int_\Omega|u(x,t)-\hat u(x)|^2\,d\nu\le 
\beta^2\int_{\Omega\setminus\check\Omega(t)} |f(u(x,t))-\bar f(t_k)|^2\,d\nu
+ 2M \nu(\check\Omega(t)) \to 0
\end{align*}
as $k\to\infty$, since $ \int_\Omega|f(u)-\bar f|^2\,d\nu = \|\D_tu\|^2_{L^2} \to0$.
Hence $\bar u=\bar\phi$, and this proves \eqref{e:zsum}.

Property \eqref{e:zsum} implies that $\mu_j^\infty=\frac13$ for each $j=l,m,r$,
by Proposition 12 of \cite{BallSengul15}, which concerns relations between roots of cubic-like
analytic functions. Then it follows $\bar u=0$,
since the $z_j$ are the three roots of the cubic $u^3-u-s$.

5. For the remainder of the proof, fix some $\hat x\in \Omega_m^\infty$
and let $c=u(\hat x,0)$. We claim that 
\begin{equation}
\Omega^\infty_m = \hat E  \qquad\text{where}\quad
\hat E:= \{x\in\Omega: u(x,0)=c \}  .
\end{equation}
Here $\hat E$ denotes the level set where $u(x,0)=c=u(\hat x,0)$.
We can then infer that $\nu(\hat E)=\mu_m^\infty=\frac13$,
and this will almost finish the proof. 

By \eqref{eq:ODE_nonlocal}, $u(x,t)=u(\hat x,t)$ for all $x\in \hat E$ and all $t\ge0$, so $\hat E\subset\Omega_m^\infty$.
Suppose then that some $x$ exists in $\Omega_m^\infty\setminus \hat E$.
Then $u(x,0)\ne u(\hat x,0)$, yet both $u(x,t)$ and $u(\hat x,t)$ lie in $\Phi_m$ for all $t\ge T_*$. 
It remains to show this leads to a contradiction.

Because $f$ is decreasing on $\Phi_m$ and 
\[
\D_t (u(x,t)-u(\hat x,t)) = -f(u(x,t))+f(u(\hat x,t)) \,,
\]
the difference $h(t)=|u(x,t)-u(\hat x,t)|$ is increasing for all $t>T_*$.
Moreover, $\D_th(t)\ge\eta$ for some $\eta>0$, such that 
$f(v)-f(w)\ge\eta$ whenever $v,w\in\bar\Phi_m$ with $v+h(T_*)<w$.
This forces $h(t)>\hat a-\hat b$ after time $T_*+ (\hat a-\hat b)/\eta$,
which contradicts that both $u(x,t)$ and $u(\hat x,t)$ lie in $\Phi_m=(\hat b,\hat a)$.

Hence $\Omega_m^\infty=\hat E$. For each point 
$x\in E_+:=\{x\in\Omega:u(x,0)>c\}$,
preservation of order and the argument just made imply that $u(x,t)\in\Phi_r$
for $t$ large enough, and for each point 
$x\in E_-:=\{x\in\Omega:u(x,0)<c\}$, necessarily $u(x,t)\in\Phi_l$ for $t$ large enough.
Then it follows $E_+=\Omega^\infty_r$ and $E_-=\Omega^\infty_l$, whence
$\nu(E_+)=\mu^\infty_r=\frac13$ and $\nu(E_-)=\mu^\infty_l=\frac13$.
This completes the proof.
    \end{proof}

\section{Sensitivity of convergence rates}\label{sec:sensitivity}
  In this section we comment on the possibility of curiously high sensitivity of convergence rates of solutions of the 
  finite-dimensional system \eqref{eq:uRN} to perturbations of parameters involving degenerate equilibria. 
{\purp  This connects with the gradient inequality in Lemma~\ref{lem:gradE}(ii), which holds under the hypothesis
  that the state $\hat\uu\in\R^N$ lies on a curve $\{\phib(s)\}_{s\in\hat J}$ of regular equilibria
  with constant average $\ip{\one,\phib(s)}$.  
  In this situation, provided we happen to know that $\hat \uu$ lies in the $\omega$-limit set 
  of some solution $\uu(t)$ of \eqref{eq:uRN}, the gradient inequality implies, by a simple and classical calculation,
  that $\uu(t)$ converges to $\hat\uu$ as $t\to\infty$ at an exponential rate. 

  A small perturbation of parameters can drastically alter the asymptotic rate of convergence, however,
  even if the asymptotic limit is not changed much. 
\nc}  Consider the three-value case for piecewise-linear $f$, recalling from Section \ref{ss:heuristic_PL} that, upon fixing $\nu_j=\hat\mu_j-\eps_j$ for $j=l,m,r$ with $\eps_m=-\eps_l-\eps_r$, \eqref{eq:dtbarf} implies
  \begin{align*}
      \frac{d}{dt}\bar f(t)=-2(\eps_r+\eps_l)\bar f(t) + (\eps_l-\eps_r-\bar u) \,.
  \end{align*}
  When $\eps_r+\eps_l>0$ and  $-2\eps_r< \bar u <2\eps_l$,
  \begin{align*}
      \frac{d}{dt}\bar f(t)=-2(\eps_r+\eps_l)\left(\bar f(t)- \barfeq 
      \right) \,,
      \qquad \barfeq =  \frac{\eps_l-\eps_r-\bar u}{2(\eps_r+\eps_l)} \,,
  \end{align*}
  and $\bar f(t)$ contracts towards $\barfeq \in (-\frac12,\frac12)$ at an $O(\eps_r+\eps_l)$-exponential rate.
  
  On the other hand, setting $\eps_l=\eps_r=0$, we see
  \[\bar f(t)=\bar f(0) \text{ for all } t\geq 0.\]
  Thus, when $u_m(0)=\bar f(0)$, $u_m$ is stationary and we observe an $O(1)$ exponential convergence rate of the solution, as
  \[\partial_t u_j(t)=-(u_j(t)-\bar f(0)) \quad\text{ for } j=l,r.\]
{\purp  In fact, this can also be seen via the gradient inequality in Lemma \ref{lem:gradE}; 
  letting $\phi_j(s) =-1+s,-s,1+s$ for $j=l,m,r$  for $s\in(-\frac12,\frac12)$, 
  we see $\sum_j \nu_j\phi_j(s) \equiv 0 =\bar u$.
   Thus the gradient inequality in Lemma \ref{lem:gradE}(ii) holds and becomes a \Loja\ inequality with $O(1)$ constant, implying exponential convergence at a rate that is $O(1)$.
   \nc}

    Similar sensitivity can be observed for the cubic nonlinearity. Considering again the three-valued case and setting $R=\frac{u_r-u_m}{u_m-u_l}$, recall from \eqref{eq:Revol1} that
    \[\partial_t R=-(u_l+u_r+u_m)(u_r-u_l)R.\]
    As
    \[\bar u = \sum_{j=l,m,r}\mu_j u_j = \frac13 (u_l+u_r+u_m)+\eps_l(u_m-u_l)-\eps_r(u_r-u_m),\]
    we have
    \begin{align*}
    \partial_t R&=-3(\eps_r(u_r-u_m)-\eps_l(u_m-u_l)+\bar u)(u_r-u_l)R,\\
    \partial_t R^{-1}&=-3(\eps_l(u_m-u_l)-\eps_r(u_r-u_m)-\bar u)(u_r-u_l)R^{-1}.
    \end{align*}
    For $\bar u = 0$ and for small $\eps_r,\eps_l>0$, the ratio $R$ evolves toward the equilibrium $\eps_l/\eps_r$ 
    at a slow exponential rate that is $O(\eps_r)$.
    And for $\eps_r=\eps_l=0$, when $\bar u>0$ (resp.~$-\bar u>0$), the ratio $R$ (resp. $R^{-1}$) contracts exponentially toward zero at a rate that is $O(\bar u)$.
    
    In case $\bar u=\eps_l=\eps_r=0$, however, the ratio $R$ is invariant in time, and $O(1)$ exponential convergence can be observed. For instance, if
    \[\bar f(0)=u_m(0), \qquad u_l(0)=-u_r(0),\]
    we see $u_m$ remains constant at $0$ and $u_l=-u_r$, as $R(t)=R(0)=1$ implies $3\bar u = u_r+u_l+u_m= 3u_m =0$. As $f$ is symmetric about $0$, this means $\bar f(t) = 0$ for all $t\geq 0$, and thus
    \[\partial_t u_j = -u_j(u_j+1)(u_j-1) \quad\text{ for } j=l,r.\]
%    Noting $(u_r+1)u_r,(u_l-1)u_l>\frac{1}{\sqrt{3}}(1+\frac{1}{\sqrt{3}})$, 
    Then $u_r$ and $u_l$ converge exponentially towards $1$ and $-1$ respectively with $O(1)$ rate.

    In summary, even for finite-dimensional dynamics where convergence to equilibrium always occurs, the exponential {\em rate} of convergence
    for the gradient system \eqref{eq:ODE_nonlocal} can suddenly jump from $O(1)$ to arbitrarily small values
    upon perturbation of parameters, {\purp despite the ``nondegenerate'' nature of the curve of equilibria which enables a gradient inequality to hold with $O(1)$ constants.}
    Whether this phenomenon can occur more broadly in other kinds of gradient systems remains to be seen.

\section*{Acknowledgements}
  This work has been partially supported by the National Science Foundation under
  grants DMS 2106534, DMS 1814991 and DMS 2206069. The authors are grateful to Sir John Ball
  for remarks which helped clarify and correct several arguments. \blue The authors would also like to thank the anonymous referees for their helpful suggestions.\nc

\bibliographystyle{siam}
\bibliography{main_bib}

 \end{document}